\begin{document}
\baselineskip=14pt

\numberwithin{equation}{section}

\newtheorem{thm}{Theorem}[section]
\newtheorem{lem}[thm]{Lemma}
\newtheorem{cor}[thm]{Corollary}
\newtheorem{Prop}[thm]{Proposition}
\newtheorem{Def}[thm]{Definition}
\newtheorem{Rem}[thm]{Remark}
\newtheorem{Ex}[thm]{Example}

\newcommand{\A}{\mathbb{A}}
\newcommand{\B}{\mathbb{B}}
\newcommand{\C}{\mathbb{C}}
\newcommand{\D}{\mathbb{D}}
\newcommand{\E}{\mathbb{E}}
\newcommand{\F}{\mathbb{F}}
\newcommand{\G}{\mathbb{G}}
\newcommand{\I}{\mathbb{I}}
\newcommand{\J}{\mathbb{J}}
\newcommand{\K}{\mathbb{K}}
\newcommand{\M}{\mathbb{M}}
\newcommand{\N}{\mathbb{N}}
\newcommand{\Q}{\mathbb{Q}}
\newcommand{\R}{\mathbb{R}}
\newcommand{\T}{\mathbb{T}}
\newcommand{\U}{\mathbb{U}}
\newcommand{\V}{\mathbb{V}}
\newcommand{\W}{\mathbb{W}}
\newcommand{\X}{\mathbb{X}}
\newcommand{\Y}{\mathbb{Y}}
\newcommand{\Z}{\mathbb{Z}}
\newcommand\ca{\mathcal{A}}
\newcommand\cb{\mathcal{B}}
\newcommand\cc{\mathcal{C}}
\newcommand\cd{\mathcal{D}}
\newcommand\ce{\mathcal{E}}
\newcommand\cf{\mathcal{F}}
\newcommand\cg{\mathcal{G}}
\newcommand\ch{\mathcal{H}}
\newcommand\ci{\mathcal{I}}
\newcommand\cj{\mathcal{J}}
\newcommand\ck{\mathcal{K}}
\newcommand\cl{\mathcal{L}}
\newcommand\cm{\mathcal{M}}
\newcommand\cn{\mathcal{N}}
\newcommand\co{\mathcal{O}}
\newcommand\cp{\mathcal{P}}
\newcommand\cq{\mathcal{Q}}
\newcommand\rr{\mathcal{R}}
\newcommand\cs{\mathcal{S}}
\newcommand\ct{\mathcal{T}}
\newcommand\cu{\mathcal{U}}
\newcommand\cv{\mathcal{V}}
\newcommand\cw{\mathcal{W}}
\newcommand\cx{\mathcal{X}}
\newcommand\ocd{\overline{\cd}}

\def\c{\centerline}
\def\ov{\overline}
\def\emp {\emptyset}
\def\pa {\partial}
\def\bl{\setminus}
\def\op{\oplus}
\def\sbt{\subset}
\def\un{\underline}
\def\al {\alpha}
\def\bt {\beta}
\def\de {\delta}
\def\Ga {\Gamma}
\def\ga {\gamma}
\def\lm {\lambda}
\def\Lam {\Lambda}
\def\om {\omega}
\def\Om {\Omega}
\def\sa {\sigma}
\def\vr {\varepsilon}
\def\va {\varphi}

\title{\bf \LARGE On nonlocal Choquard equations with Hardy-Littlewood-Sobolev critical exponents \thanks{Partially supported by NSFC (11571317, 11101374, 11271331) and ZJNSF(LY15A010010)}\vspace{5mm}}

\author{ Fashun Gao and Minbo Yang\thanks{M. Yang is the corresponding author: mbyang@zjnu.edu.cn}
\vspace{2mm}\\
{\small Department of Mathematics, Zhejiang Normal University} \\ {\small  Jinhua, Zhejiang, 321004, P. R. China}}

\date{}
\maketitle

\begin{abstract}
We consider the following  nonlinear Choquard equation with Dirichlet boundary condition
$$-\Delta u
=\left(\int_{\Omega}\frac{|u|^{2_{\mu}^{\ast}}}{|x-y|^{\mu}}dy\right)|u|^{2_{\mu}^{\ast}-2}u+\lambda f(u)\hspace{4.14mm}\mbox{in}\hspace{1.14mm} \Omega,
$$
where $\Omega$ is a smooth bounded domain of $\mathbb{R}^N$, $\lambda>0$, $N\geq3$,  $0<\mu<N$ and $2_{\mu}^{\ast}$ is the critical exponent in the sense of the Hardy-Littlewood-Sobolev inequality. Under suitable assumptions on different types of nonlinearities $f(u)$,  we are able to prove some existence and multiplicity results for the equation by variational methods.
 \vspace{0.3cm}

\noindent{\bf Mathematics Subject Classifications (2000):} 35J25,
35J60, 35A15

\vspace{0.3cm}

 \noindent {\bf Keywords:}  Choquard equation; Hardy-Littlewood-Sobolev critical exponent; Concave and convex nonlinearities;  Brezis-Nirenberg problem.
\end{abstract}

\section{\large Introduction and main results}

In a pioneering paper \cite{BN} , Brezis and Nirenberg studied the problems of the type
\begin{equation}\label{local.S1}
\left\{\begin{array}{l}
\displaystyle-\Delta u=|u|^{2^{\ast}-2}u+\lambda u^{q}\ \ \mbox{in}\ \ \Omega,\\
\\
\displaystyle u=0 \hspace{25.14mm}\ \ \ \mbox{on}\ \ \partial\Omega,
\end{array}
\right.
\end{equation}
where $2^{\ast}$  is the critical exponent  for the embedding of $H_{0}^{1}(\Omega)$ to $L^p(\Omega)$.  By developing some skillful techniques in estimating the Minimax level, the authors were able to prove the existence of  nontrivial solutions for the equation under linear or subcritical superlinear perturbation. To complement the results in \cite{BN}, the sublinear case was studied by Ambrosetti, Brezis and Cerami in \cite{ABC}. There the authors investigated the combined effects of the critical term and the sublinear term on the existence of solutions and established the existence and multiplicity results depending on the parameter $\lambda$. Ever since then, the elliptic equations with critical  exponents have been a hot subject. For example, \cite{AM, BW, CFP, CFS, CSS, GaPa, GP, Wi} studied the critical problems on bounded or unbounded domains, \cite{DH,GYu} investigated the quasilinear critical growth problems driven by  $p$-Laplacian, \cite{CS} studied the regularity of stable solutions of elliptic problems on Riemannian manifold, \cite{ CP, DMPV, FM, GYu, J, RSW, SV} considered the Hardy-Sobolev type inequalities with remaining terms and the Hardy-Sobolev singular critical exponent problems, while \cite{BCDS1, BCSS, Ta} investigated the critical problems driven by the  fractional power of the Laplacian $(-\Delta)^{s}$ ($0<s<1$).

In the present paper we are going to study the existence and multiplicity results for the following {\it critical nonlocal equation}:
\begin{equation}\label{CCE}
\left\{\begin{array}{l}
\displaystyle-\Delta u
=\Big(\int_{\Omega}\frac{|u|^{2_{\mu}^{\ast}}}{|x-y|^{\mu}}dy\Big)|u|^{2_{\mu}^{\ast}-2}u+\lambda f(u)\hspace{4.14mm}\mbox{in}\hspace{1.14mm} \Omega,\\
\displaystyle u\in H_{0}^{1}(\Omega),
\end{array}
\right.
\end{equation}
where $\Omega$ is a smooth bounded domain of $\mathbb{R}^N$, $\lambda>0$, $N\geq3$, $0<\mu<N$, $2_{\mu}^{\ast}=(2N-\mu)/(N-2)$ and $f(u)$ is a nonlinearity satisfying certain assumptions. This type of nonlocal elliptic equation is closely related to the nonlinear Choquard equation
\begin{equation}\label{Nonlocal.S1}
 -\Delta u +V(x)u =\Big(\frac{1}{|x|^{\mu}}\ast|u|^{p}\Big)|u|^{p-2}u  \quad \mbox{in} \quad \R^3,
\end{equation}
where $
\frac{2N-\mu}{N}\leq p\leq\frac{2N-\mu}{N-2}
$. For $p=2$ and $\mu=1$, the problem goes
back to the description of the quantum theory of a polaron at rest by S. Pekar in 1954 \cite{P1}
and the modeling of an electron trapped
in its own hole in 1976 in the work of P. Choquard, as a certain approximation to Hartree-Fock theory of one-component
plasma \cite{L1}. In some particular cases, this equation is also known as the Schr\"{o}dinger-Newton equation, which was introduced by Penrose in his discussion on the selfgravitational collapse of a quantum mechanical wave function  \cite{Pe}. For $p=\frac{2N-1}{N-2}$ and $\mu=1$, by using the Green function, it is obvious that \eqref{Nonlocal.S1} can been regarded as a generalized version of Schr\"{o}dinger-Newton
$$
\left\{\begin{array}{l}
\displaystyle-\Delta u+V(x)u =u^{\frac{N+1}{N-2}}\phi\ \ \ \ \mbox{in}\ \ \mathbb{R}^N,\\
\\
\displaystyle-\Delta \phi=u^{\frac{2N-1}{N-2}}\ \ \ \ \ \ \ \ \ \ \  \ \ \mbox{in}\ \ \mathbb{R}^N.\\
\end{array}
\right.
$$
And thus equation \eqref{CCE} can been viewed as a generalized Schr\"{o}dinger-Newton restricted on bounded domain with Dirichlet boundary condition.

The starting point of the variational approach to the problem \eqref{CCE} is the following  well-known Hardy-Littlewood-Sobolev inequality, see \cite{LL}, which leads to a new type of critical problem with nonlocal nonlinearities driven by Riesz potential.
\begin{Prop}\label{HLS}
 [Hardy-Littlewood-Sobolev inequality]. Let $t,r>1$ and $0<\mu<N$ with $1/t+\mu/N+1/r=2$, $f\in L^{t}(\mathbb{R}^N)$ and $h\in L^{r}(\mathbb{R}^N)$. There exists a sharp constant $C(t,N,\mu,r)$, independent of $f,h$, such that
\begin{equation}\label{HLS1}
\int_{\mathbb{R}^{N}}\int_{\mathbb{R}^{N}}\frac{f(x)h(y)}{|x-y|^{\mu}}dxdy\leq C(t,N,\mu,r)|f|_{t}|h|_{r},
\end{equation}
where $|\cdot|_{q}$ for the $L^{q}(\mathbb{R}^{N})$-norm for $q\in[1,\infty]$. If $t=r=2N/(2N-\mu)$, then
$$
 C(t,N,\mu,r)=C(N,\mu)=\pi^{\frac{\mu}{2}}\frac{\Gamma(\frac{N}{2}-\frac{\mu}{2})}{\Gamma(N-\frac{\mu}{2})}\left\{\frac{\Gamma(\frac{N}{2})}{\Gamma(N)}\right\}^{-1+\frac{\mu}{N}}.
$$
In this case there is equality in \eqref{HLS1} if and only if $f\equiv(const.)h$ and
$$
h(x)=A(\gamma^{2}+|x-a|^{2})^{-(2N-\mu)/2}
$$
for some $A\in \mathbb{C}$, $0\neq\gamma\in\mathbb{R}$ and $a\in \mathbb{R}^{N}$.
\end{Prop}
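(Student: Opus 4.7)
The plan is to prove this classical inequality in two conceptually separate stages: first the bilinear bound for all admissible exponents $(t,r)$ with the constant left implicit, then the identification of the explicit value of $C(N,\mu)$ together with the extremal family in the diagonal case $t = r = 2N/(2N-\mu)$.

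For the bound itself, I would introduce the Riesz-type potential $I_\mu h(x) = \int_{\R^N} |x-y|^{-\mu} h(y)\,dy$ and recast the double integral as $\int f(x)\,I_\mu h(x)\,dx$. By Hölder's inequality with exponents $t$ and $t' = t/(t-1)$, it suffices to establish the mapping property $I_\mu : L^r(\R^N) \to L^{t'}(\R^N)$; the constraint $1/t + 1/r + \mu/N = 2$ is precisely the dimensional balance required, as a scaling check $h \mapsto h(\lambda\,\cdot)$ confirms. The standard route is to split the kernel as $|x-y|^{-\mu}\chi_{|x-y|<R} + |x-y|^{-\mu}\chi_{|x-y|\ge R}$, bound the convolution with the first piece in $L^\infty$ via Hölder and the convolution with the second piece in $L^r$, optimize the cutoff $R$ as a function of the level $\alpha$ to obtain the weak-type endpoint $I_\mu : L^r \to L^{t',\infty}$, and finally upgrade to strong type by the Marcinkiewicz interpolation theorem applied between two such endpoints at nearby exponents.

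For the sharp constant in the diagonal case I would follow Lieb. Riesz's rearrangement inequality reduces the extremization to nonnegative radial decreasing $f,h$. A stereographic projection $\R^N \to S^N$ transports the problem to the sphere, and the critical observation is that at the diagonal exponent $2N/(2N-\mu)$ the conformal factors arising from the Jacobian and from the transformed kernel combine so that the variational problem on $S^N$ is invariant under the conformal group $SO(N+1,1)$. One then produces an extremizer on $S^N$ by a concentration-compactness argument carried out modulo conformal symmetries, writes down its nonlocal integral Euler--Lagrange equation, and uses either a moving-plane/moving-sphere argument on $\R^N$ after pulling back or Lieb's symmetrization-then-ODE argument on the sphere to conclude that every optimizer is the conformal image of a constant. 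Pulling back to $\R^N$ yields the family $A(\gamma^2+|x-a|^2)^{-(2N-\mu)/2}$, and substituting one representative such as $h(x) = (1+|x|^2)^{-(2N-\mu)/2}$ into the double integral and reducing to beta and gamma function identities gives the closed-form value of $C(N,\mu)$.

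The main obstacle is the rigidity step identifying all extremals as this conformal family. The inequality itself, the existence of an extremizer on $S^N$, and even radial symmetry of one maximizer are all relatively standard once the conformal setup is in place; what is genuinely delicate is ruling out any other optimizer. This requires either Lieb's original approach (conformal rearrangement on $S^N$ combined with regularity theory for the Euler--Lagrange equation, reducing the problem to a tractable radial ODE) or the integral-equation moving-plane method of Chen--Li--Ou. The explicit evaluation of $C(N,\mu)$ and the clean description of the extremizer family both hinge on this uniqueness-up-to-symmetry result.
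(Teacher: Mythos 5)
The paper offers no proof of this proposition: it is stated as a classical result and attributed to the book of Lieb and Loss \cite{LL}, so there is nothing internal to compare your argument against. Your outline is a correct sketch of the standard proof contained in that reference --- the weak-type estimate for the Riesz potential followed by Marcinkiewicz interpolation for the general bound, and Lieb's rearrangement/stereographic-projection/conformal-invariance argument for the sharp constant and the identification of the extremal family $A(\gamma^{2}+|x-a|^{2})^{-(2N-\mu)/2}$ in the diagonal case --- and you correctly single out the uniqueness of extremizers up to conformal symmetry as the genuinely delicate step.
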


The Hardy-Littlewood-Sobolev inequality plays an important role in studying nonlocal problems and we'd like to mention that other nonlocal version inequalities are considered in some recent literature, for example, the authors in \cite{DV} studied the Hardy-Littlewood inequalities in fractional weighted Sobolev spaces.

Notice that, by the Hardy-Littlewood-Sobolev inequality, the integral
$$
\int_{\mathbb{R}^{N}}\int_{\mathbb{R}^{N}}\frac{|u(x)|^{q}|u(y)|^{q}}{|x-y|^{\mu}}dxdy
$$
is well defined if
$$
\frac{2N-\mu}{N}\leq q\leq\frac{2N-\mu}{N-2}.
$$
Here, $\frac{2N-\mu}{N}$ is called the lower critical exponent and $2_{\mu}^{\ast}=\frac{2N-\mu}{N-2}$ is the upper critical exponent due to the Hardy-Littlewood-Sobolev inequality. In this sense we can call the problem \eqref{CCE} a critical nonlocal elliptic equation.  In a recent paper \cite{MS4} by Moroz and Van
Schaftingen, the authors considered  the nonlinear Choquard equation \eqref{Nonlocal.S1} in $\R^N$ with lower critical exponent $\frac{2N-\mu}{N}$ if  the potential $1-V$ should not decay to zero at infinity faster then the inverse square of $|x|$. However, nothing is known about the upper critical exponent case and how the behavior of the potential will affect the existence results.

The existence and qualitative properties of solutions of Choquard type  equations \eqref{Nonlocal.S1} have been widely studied in the last decades. In \cite{L1}, Lieb proved the existence and uniqueness, up to translations,
of the ground state. Later, in \cite{Ls}, Lions showed the existence of
a sequence of radially symmetric solutions. In \cite{CCS1, ML,  MS1} the authors showed the regularity, positivity
and radial symmetry of the ground states and
derived decay property at infinity as well. Nowadays the existence and asymptotic behavior of the solutions of the Choquard equations under different assumptions of the potentials and the nonlinearities attract a lot of attention and interest.  We refer the readers to \cite{AC, BJS} for the strongly indefinite case with sign-changing periodic potential $V$, where the existence of ground state solutions and infinitely many geometrically distinct weak solutions were obtained by critical point theorem; \cite{MS2} for  the existence of ground states under the assumptions of Berestycki-Lions type; \cite{ANY} for the existence of multibump shaped solution for the equation with deepening potential well; \cite{CS, GS} for the existence of sign-changing solutions; \cite{AY1, AY2, MS3, WW} for the existence and concentration behavior of the semiclassical solutions.

To the best knowledge of the authors, there are not so many papers considering the Choquard equation with critical growth, for problem set on $\R^2$  the second author and his collaborators in \cite{ACTY} considered the case of critical growth in the sense of Trudinger-Moser inequality and studied the existence and concentration of the ground states. For the problem set on a bounded domain of $\R^N$, $N\geq 3$, the authors of the present paper considered  in  \cite{GY}  the critical Choquard equation  \eqref{CCE}  with upper critical exponent $2_{\mu}^{\ast}=\frac{2N-\mu}{N-2}$  under  a linear perturbation term  and established the existence results corresponding to the well-known results in \cite{BN}.

The aim of the present paper is to continue to study the existence and multiplicity of the critical Choquard equation  \eqref{CCE} with upper critical exponent $2_{\mu}^{\ast}=\frac{2N-\mu}{N-2}$ on a bounded domain of $\R^N$, $N\geq 3$. We are interested in the problem that how  the subcritical superlinear or the sublinear perturbation term will affect the existence and multiplicity of  the critical Choquard equation  \eqref{CCE}. In \cite{GY}, $S_{H,L}$ denotes the best constant defined by
\begin{equation}\label{S1}
S_{H,L}:=\displaystyle\inf\limits_{u\in D^{1,2}(\mathbb{R}^N)\backslash\{{0}\}}\ \ \frac{\displaystyle\int_{\mathbb{R}^N}|\nabla u|^{2}dx}{(\displaystyle\int_{\mathbb{R}^N}\int_{\mathbb{R}^N}
\frac{|u(x)|^{2_{\mu}^{\ast}}|u(y)|^{2_{\mu}^{\ast}}}{|x-y|^{\mu}}dxdy)^{\frac{N-2}{2N-\mu}}}.
\end{equation}

\begin{Prop}\label{ExFu} (See \cite{GY}.)
The constant $S_{H,L}$ defined in \eqref{S1} is achieved if and only if $$u=C\Big(\frac{b}{b^{2}+|x-a|^{2}}\Big)^{\frac{N-2}{2}} ,$$ where $C>0$ is a fixed constant, $a\in \mathbb{R}^{N}$ and $b\in(0,\infty)$ are parameters. What is more,
$$
S_{H,L}=\frac{S}{C(N,\mu)^{\frac{N-2}{2N-\mu}}},
$$
where $S$ is the best Sobolev constant.
\end{Prop}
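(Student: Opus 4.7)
The plan is to derive the identity $S_{H,L} = S/C(N,\mu)^{(N-2)/(2N-\mu)}$ by comparing the quotient defining $S_{H,L}$ with the classical Sobolev quotient via the Hardy–Littlewood–Sobolev inequality, and then to read off the shape of the minimizers from the equality cases of HLS and of the Sobolev inequality.

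First I would establish the inequality $S_{H,L}\ge S/C(N,\mu)^{(N-2)/(2N-\mu)}$. Apply Proposition \ref{HLS} with $t=r=\frac{2N}{2N-\mu}$ to $f=h=|u|^{2_{\mu}^{\ast}}$; since $2_{\mu}^{\ast}\cdot\frac{2N}{2N-\mu}=\frac{2N}{N-2}=2^{\ast}$, the right-hand side becomes $C(N,\mu)\bigl(\int_{\R^N}|u|^{2^{\ast}}\bigr)^{(2N-\mu)/N}$. Raising to the power $(N-2)/(2N-\mu)$ gives
\[
\Bigl(\int_{\R^N}\!\int_{\R^N}\tfrac{|u(x)|^{2_{\mu}^{\ast}}|u(y)|^{2_{\mu}^{\ast}}}{|x-y|^{\mu}}dxdy\Bigr)^{\!(N-2)/(2N-\mu)}\le C(N,\mu)^{(N-2)/(2N-\mu)}\,|u|_{2^{\ast}}^{2},
\]
and dividing $\int|\nabla u|^{2}$ by both sides and taking infimum yields the lower bound.

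Next I would establish the matching upper bound by testing the Aubin–Talenti bubble $U(x)=C\bigl(b/(b^{2}+|x-a|^{2})\bigr)^{(N-2)/2}$ in the quotient \eqref{S1}. The key point is that $U$ simultaneously saturates two inequalities: it achieves the Sobolev constant $S$, and, because equality in HLS for $t=r=\frac{2N}{2N-\mu}$ requires $f\equiv(\text{const})h$ with $h(x)=A(\gamma^{2}+|x-a|^{2})^{-(2N-\mu)/2}$ (Proposition \ref{HLS}), the choice $f=h=|U|^{2_{\mu}^{\ast}}$ produces exactly this profile. Hence both inequalities above become equalities for $U$, proving $S_{H,L}\le S/C(N,\mu)^{(N-2)/(2N-\mu)}$ and settling the ``if'' direction of the characterization.

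For the ``only if'' part, suppose $u\in D^{1,2}(\R^N)\setminus\{0\}$ attains $S_{H,L}$. Tracing back through the chain of inequalities, equality must hold in both the HLS step and the Sobolev step. The HLS equality clause forces
\[
|u(x)|^{2_{\mu}^{\ast}}=A(\gamma^{2}+|x-a|^{2})^{-(2N-\mu)/2}
\]
for some $A,\gamma,a$; extracting the $(N-2)/(2N-\mu)$-th root rewrites this as $|u(x)|=\tilde C\bigl(b/(b^{2}+|x-a|^{2})\bigr)^{(N-2)/2}$. Consistency with the Sobolev equality case (for which $|u|\le|\nabla u|$ and the unique—up to translation, dilation and sign—minimizers are the Aubin–Talenti bubbles) then forces $u$ itself to have the asserted form. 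The identity $S_{H,L}=S/C(N,\mu)^{(N-2)/(2N-\mu)}$ follows by evaluating the quotient on any such $u$.

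The main technical point, and the only place where one has to be a little careful, is the extraction of the equality cases: the HLS inequality is only one half of the chain, and one must make sure that a minimizer of $S_{H,L}$ actually forces equality in the Sobolev step too, so that the rigidity of Aubin–Talenti can be invoked to rule out any additional symmetry-breaking minimizer. Once that bookkeeping is done, both the uniqueness up to the three parameters $(C,b,a)$ and the explicit value of $S_{H,L}$ drop out at once.
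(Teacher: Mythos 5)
Your argument is correct and is essentially the proof given in the cited reference \cite{GY} (the present paper only quotes the result): one sandwiches the nonlocal Rayleigh quotient between $S/C(N,\mu)^{\frac{N-2}{2N-\mu}}$ and the Sobolev quotient via the Hardy--Littlewood--Sobolev inequality with $t=r=\frac{2N}{2N-\mu}$, notes that the Aubin--Talenti bubble saturates both inequalities simultaneously, and reads the minimizers off the two equality cases. The only step worth spelling out in a full write-up is the end of the ``only if'' direction, namely that knowing $|u|$ is a bubble profile (which is strictly positive) forces $u$ itself to have one sign, so that the Sobolev rigidity applies to $u$ and not just to $|u|$; this is routine since $u/|u|$ is then a locally Sobolev function with values in $\{\pm1\}$, hence constant.
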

Let $U(x):=\frac{[N(N-2)]^{\frac{N-2}{4}}}{(1+|x|^{2})^{\frac{N-2}{2}}}$ be a minimizer for $S$, see \cite{Wi} for example, then
\begin{equation}\label{REL}
\aligned
\tilde{U}(x)=S^{\frac{(N-\mu)(2-N)}{4(N-\mu+2)}}C(N,\mu)^{\frac{2-N}{2(N-\mu+2)}}\frac{[N(N-2)]^{\frac{N-2}{4}}}{(1+|x|^{2})^{\frac{N-2}{2}}}
\endaligned
\end{equation}
is the unique  minimizer for $S_{H,L}$ that satisfies
$$
-\Delta u=\Big(\int_{\R^N}\frac{|u|^{2_{\mu}^{\ast}}}{|x-y|^{\mu}}dy\Big)|u|^{2_{\mu}^{\ast}-2}u\ \ \   \hbox{in}\ \ \ \R^N
$$
with
$$
\int_{\mathbb{R}^N}|\nabla \tilde{U}|^{2}dx=\int_{\mathbb{R}^N}\int_{\mathbb{R}^N}\frac{|\tilde{U}(x)|^{2_{\mu}^{\ast}}|\tilde{U}(y)|^{2_{\mu}^{\ast}}}{|x-y|^{\mu}}dxdy=S_{H,L}^{\frac{2N-\mu}{N-\mu+2}}.
$$
Moreover, for every open subset $\Omega$ of $\mathbb{R}^N$,
\begin{equation}
S_{H,L}(\Omega):=\displaystyle\inf\limits_{u\in D_{0}^{1,2}(\Omega)\backslash\{{0}\}}\ \ \frac{\displaystyle\int_{\Omega}|\nabla u|^{2}dx}{\Big(\displaystyle\int_{\Omega}\int_{\Omega}\frac{|u(x)|^{2_{\mu}^{\ast}}|u(y)|^{2_{\mu}^{\ast}}}{|x-y|^{\mu}}dxdy\Big)^{\frac{N-2}{2N-\mu}}}=S_{H,L},
\end{equation}
and $S_{H,L}(\Omega)$ is never achieved except  $\Omega=\R^N$, see \cite{GY}.

Through this paper we will assume that $\Omega$ is a smooth bounded domain of $\mathbb{R}^N$, the main results of this paper are the following Theorems.

For the critical Choquard equation \eqref{CCE} with a subcritical local term  $f=u^{q}$,  $1<q<2^{\ast}-1$,
\begin{equation}\label{CCE1}
\left\{\begin{array}{l}
\displaystyle-\Delta u
=\Big(\int_{\Omega}\frac{|u|^{2_{\mu}^{\ast}}}{|x-y|^{\mu}}dy\Big)|u|^{2_{\mu}^{\ast}-2}u+\lambda u^q\hspace{4.14mm}\mbox{in}\hspace{1.14mm} \Omega,\\
\displaystyle u\in H_{0}^{1}(\Omega),
\end{array}
\right.
\end{equation}
we have the following existence result.
\begin{thm}\label{EXS}
Assume that $1<q<2^{\ast}-1$, $N\geq3$ and $0<\mu<N$. Then, problem \eqref{CCE1} has at least one nontrivial solution provided that either
\begin{itemize}
\item[(1).] $N>\max\{\min\{\frac{2(q+3)}{q+1},2+\frac{\mu}{q+1}\},\frac{2(q+1)}{q}\}$ and $\lambda>0$, or
\item[(2).] $N\leq\max\{\min\{\frac{2(q+3)}{q+1},2+\frac{\mu}{q+1}\},\frac{2(q+1)}{q}\}$ and $\lambda$ is sufficiently large.
\end{itemize}
\end{thm}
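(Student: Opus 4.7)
The strategy is to apply the mountain pass theorem to the energy functional
\begin{equation*}
I_\lambda(u)=\frac12\int_\Om|\nabla u|^2\,dx-\frac{1}{2\cdot 2_\mu^\ast}\int_\Om\int_\Om\frac{|u(x)|^{2_\mu^\ast}|u(y)|^{2_\mu^\ast}}{|x-y|^\mu}\,dx\,dy-\frac{\lambda}{q+1}\int_\Om (u^+)^{q+1}\,dx
\end{equation*}
on $H_0^1(\Om)$; by Proposition~\ref{HLS} together with Sobolev's embedding it is of class $C^1$, and its nonzero critical points are, by standard regularity arguments and $q+1<2^\ast$, positive solutions of \eqref{CCE1}. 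Since $q+1>2$ and $2\cdot 2_\mu^\ast>2$, the standard argument yields $I_\lambda(u)\ge\alpha(\rho)>0$ on a small sphere $\|u\|=\rho$, while $I_\lambda(tu_0)\to-\infty$ along any fixed ray; hence the mountain pass value $c_\lambda=\inf_{\gamma\in\Gamma}\max_{t\in[0,1]}I_\lambda(\gamma(t))$ is strictly positive.

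A concentration-compactness analysis, analogous to the one carried out in \cite{GY} for the linear perturbation case, shows that $I_\lambda$ satisfies the $(PS)_c$ condition for every
\begin{equation*}
c<c^\ast:=\frac{N+2-\mu}{2(2N-\mu)}\,S_{H,L}^{\frac{2N-\mu}{N+2-\mu}}.
\end{equation*}
The upper critical nonlocal term is the only obstruction to compactness, and each bubble at a concentration point carries at least the energy $c^\ast$ produced by the extremal profile $\tilde U$ described in \eqref{REL}; in particular the subcritical perturbation $\lambda (u^+)^{q+1}$ does not contribute to the loss of compactness. Consequently the existence of a nontrivial solution reduces to verifying the strict inequality $c_\lambda<c^\ast$.

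To this end, I would use the rescaled Talenti-type test function
\begin{equation*}
u_\varepsilon(x)=\psi(x)\,\varepsilon^{-(N-2)/2}\,\tilde U\!\Big(\frac{x-x_0}{\varepsilon}\Big),
\end{equation*}
where $x_0\in\Om$ and $\psi\in C_c^\infty(\Om)$ is a cutoff equal to $1$ near $x_0$. Expanding each piece of $I_\lambda(tu_\varepsilon)$ separately and maximising in $t\ge0$, one obtains an estimate of the form
\begin{equation*}
\max_{t\ge0}I_\lambda(tu_\varepsilon)\le c^\ast + C_1\,\varepsilon^{\beta(N,\mu)}-C_2\,\lambda\,\mathcal{R}_q(\varepsilon),
\end{equation*}
where $C_1,C_2>0$, the error $C_1\varepsilon^{\beta(N,\mu)}$ comes from the cutoff acting on the Dirichlet energy and the nonlocal critical double integral (its exponent depends on $\mu$ through the Riesz kernel), and $\mathcal{R}_q(\varepsilon)$ is the leading order of $\int_\Om u_\varepsilon^{q+1}\,dx$, which is a power of $\varepsilon$ (possibly with a logarithmic correction at the borderline $q+1=N/(N-2)$) dictated by $N$ and $q$. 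A case-by-case comparison of these two scales yields the dichotomy in the theorem: when the strict dimensional inequality of (1) holds, the gain $\lambda\mathcal{R}_q(\varepsilon)$ strictly dominates the loss as $\varepsilon\to 0^+$ for \emph{every} $\lambda>0$; otherwise one has to freeze $\varepsilon$ and let $\lambda$ be large enough to absorb the loss.

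The main obstacle is this third step: contrary to the local Brezis--Nirenberg problem, the cutoff creates an error in the nonlocal critical double integral whose precise order requires delicate bookkeeping of the Riesz kernel $|x-y|^{-\mu}$ against the decay of $\tilde U$, and this is exactly the source of the $\mu$-dependent threshold $2+\mu/(q+1)$ in the statement. Combined with the classical $2(q+3)/(q+1)$ coming from the Dirichlet energy expansion and $2(q+1)/q$ coming from the location of the maximiser in $t$, these three exponents determine precisely when no restriction on $\lambda$ is necessary. Once $c_\lambda<c^\ast$ is established, a direct application of the mountain pass theorem provides a nontrivial critical point of $I_\lambda$.
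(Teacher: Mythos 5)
Your proposal follows essentially the same route as the paper: mountain pass geometry, a local $(PS)_c$ condition below the threshold $\frac{N+2-\mu}{2(2N-\mu)}S_{H,L}^{\frac{2N-\mu}{N+2-\mu}}$ obtained via a Brezis--Lieb splitting of the nonlocal term, and the truncated-bubble energy estimate with the same two-case treatment of $\lambda$ (arbitrary $\lambda>0$ under the dimensional restriction, $\lambda$ sufficiently large otherwise). The only inaccuracy is a misattribution at the end: the exponent $\frac{2(q+1)}{q}$ does not come from the location of the maximiser in $t$ but from requiring $N<(N-2)(q+1)$ so that $\int_{\Omega}u_{\varepsilon}^{q+1}\,dx$ is of the concentrated order $O(\varepsilon^{N-\frac{(N-2)(q+1)}{2}})$; this does not affect the validity of the argument.
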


We are also interested in the critical Choquard equation \eqref{CCE} with a subcritical nonlocal term, that is to consider the following equation,
\begin{equation}\label{CCE2}
\left\{\begin{array}{l}
\displaystyle-\Delta u
=\Big(\int_{\Omega}\frac{|u|^{2_{\mu}^{\ast}}}{|x-y|^{\mu}}dy\Big)|u|^{2_{\mu}^{\ast}-2}u+\lambda \Big(\int_{\Omega}\frac{|u|^{q}}{|x-y|^{\mu}}dy\Big)|u|^{q-2}u\hspace{4.14mm}\mbox{in}\hspace{1.14mm} \Omega,\\
\displaystyle u\in H_{0}^{1}(\Omega).
\end{array}
\right.
\end{equation}
For this case, we establish the following existence result.
\begin{thm}\label{EXS2}
Assume that $1< q<2_{\mu}^{\ast}$, $N\geq3$ and $0<\mu<N$. Then, problem \eqref{CCE2} has at least one nontrivial solution provided that either
\begin{itemize}
\item[(1).] $N>\frac{2(q+1)-\mu}{q-1}$ and $\lambda>0$, or
\item[(2).] $N\leq\frac{2(q+1)-\mu}{q-1}$ and $\lambda$ is sufficiently large.
\end{itemize}
\end{thm}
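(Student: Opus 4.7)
I apply the Mountain Pass Theorem to the $C^1$ functional
$$
I_\lambda(u)=\frac{1}{2}\int_\Omega|\nabla u|^2\,dx-\frac{1}{2\cdot 2_\mu^\ast}\int_\Omega\int_\Omega\frac{|u(x)|^{2_\mu^\ast}|u(y)|^{2_\mu^\ast}}{|x-y|^\mu}\,dxdy-\frac{\lambda}{2q}\int_\Omega\int_\Omega\frac{|u(x)|^q|u(y)|^q}{|x-y|^\mu}\,dxdy
$$
on $H_0^1(\Omega)$, whose nontrivial critical points are weak solutions of \eqref{CCE2}. Proposition~\ref{HLS} together with the embedding $H_0^1(\Omega)\hookrightarrow L^{2Nq/(2N-\mu)}(\Omega)$ makes $I_\lambda$ well-defined since $1<q<2_\mu^\ast$. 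Applying HLS once more one gets $I_\lambda(u)\ge\tfrac12\|u\|^2-C_1\|u\|^{2\cdot 2_\mu^\ast}-C_2\lambda\|u\|^{2q}$, so since $q>1$ and $2_\mu^\ast>1$ the Mountain Pass geometry holds: $I_\lambda|_{\|u\|=\rho}\ge\alpha>0$ for some small $\rho$, and $I_\lambda(tu_0)\to-\infty$ as $t\to+\infty$ for any fixed $u_0\not\equiv 0$ (using $2\cdot 2_\mu^\ast>\max\{2,2q\}$). This produces a Palais--Smale sequence at the min-max level $c_\lambda\ge\alpha$.

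A local Palais--Smale analysis along the lines of \cite{GY}, based on a Brezis--Lieb type splitting for the HLS nonlinearity and the fact that $S_{H,L}(\Omega)$ is never achieved, shows that every $(PS)_c$ sequence with
$c<c^\ast:=\frac{N-\mu+2}{2(2N-\mu)}\,S_{H,L}^{(2N-\mu)/(N-\mu+2)}$
admits a strongly convergent subsequence; combined with $c_\lambda\ge\alpha>0$, the limit is a nontrivial critical point. It therefore suffices to check $c_\lambda<c^\ast$.

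For that, I test with rescaled truncations of the extremal. Assuming $0\in\Omega$, choose $\eta\in C_c^\infty(\Omega)$ with $\eta\equiv 1$ near $0$ and set $u_\varepsilon(x)=\eta(x)\,\varepsilon^{-(N-2)/2}\tilde U(x/\varepsilon)$ with $\tilde U$ as in \eqref{REL}. Standard expansions yield
$$
\int_\Omega|\nabla u_\varepsilon|^2\,dx=S_{H,L}^{\frac{2N-\mu}{N-\mu+2}}+O(\varepsilon^{N-2}),\qquad\int_\Omega\int_\Omega\frac{|u_\varepsilon(x)|^{2_\mu^\ast}|u_\varepsilon(y)|^{2_\mu^\ast}}{|x-y|^\mu}\,dxdy=S_{H,L}^{\frac{2N-\mu}{N-\mu+2}}+O(\varepsilon^N),
$$
while restricting the subcritical double integral to a small ball around $0$ where $\eta\equiv 1$ and rescaling $x\mapsto\varepsilon x$ gives
$$
\int_\Omega\int_\Omega\frac{|u_\varepsilon(x)|^q|u_\varepsilon(y)|^q}{|x-y|^\mu}\,dxdy\;\ge\;C_3\,\varepsilon^{2N-\mu-q(N-2)}
$$
(up to harmless logarithmic or lower-order corrections when $q\le 2_\mu^\ast/2$, which do not change the leading exponent). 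Since $2\cdot 2_\mu^\ast>\max\{2,2q\}$, the maximizer $t_0=t_0(\varepsilon,\lambda)$ of $t\mapsto I_\lambda(tu_\varepsilon)$ lies in a compact interval bounded away from $0$ and $\infty$ uniformly in $\varepsilon$ and $\lambda\ge 0$, and a direct computation shows
$$
\sup_{t\ge 0}I_\lambda(tu_\varepsilon)\;\le\;c^\ast+C_4\,\varepsilon^{N-2}-C_5\,\lambda\,\varepsilon^{2N-\mu-q(N-2)}.
$$

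The condition $N>\frac{2(q+1)-\mu}{q-1}$ is easily checked to be equivalent to $2N-\mu-q(N-2)<N-2$, so the negative $\lambda$-term dominates the error $O(\varepsilon^{N-2})$ as $\varepsilon\to 0^+$ for \emph{every} $\lambda>0$, giving $c_\lambda<c^\ast$ and proving (1). In the complementary case $2N-\mu-q(N-2)\ge N-2$ one instead fixes a small $\varepsilon$ and then takes $\lambda$ sufficiently large so that the $\lambda$-term dominates, proving (2). The main technical obstacle is the sharp lower bound on the nonlocal subcritical integral: in contrast to a local $L^{2q}$-norm, the double integral couples two scales through the Riesz kernel $|x-y|^{-\mu}$, and the unrestricted integral $\int_{\mathbb{R}^N}\int_{\mathbb{R}^N}\tilde U^q(x)\tilde U^q(y)/|x-y|^\mu\,dxdy$ diverges when $q\le 2_\mu^\ast/2$, so one must work with truncated integrals and identify the correct leading $\varepsilon$-power directly rather than relying on HLS.
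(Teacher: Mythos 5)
Your proposal follows essentially the same route as the paper: Mountain Pass geometry for the functional, a Brezis--Lieb-type splitting showing the $(PS)_c$ condition below $\frac{N+2-\mu}{4N-2\mu}S_{H,L}^{\frac{2N-\mu}{N+2-\mu}}$, and a direct estimate of the truncated nonlocal subcritical double integral giving the leading power $\varepsilon^{2N-\mu-q(N-2)}$; your observation that $N>\frac{2(q+1)-\mu}{q-1}$ is equivalent to $2N-\mu-q(N-2)<N-2$ is correct and is the heart of case (1), and your large-$\lambda$ argument for case (2) matches the paper's.

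One quantitative slip: the error in the critical nonlocal term is not $O(\varepsilon^{N})$ but $O(\varepsilon^{N-\frac{\mu}{2}})$ (this is what the truncation of the Riesz double integral actually produces, as in \cite{GY}), so the consolidated error in $\sup_{t\ge0}I_\lambda(tu_\varepsilon)$ is $O(\varepsilon^{\min\{N-2,\,N-\frac{\mu}{2}\}})$ rather than $O(\varepsilon^{N-2})$. For $\mu>4$ the dominant error is then $\varepsilon^{N-\frac{\mu}{2}}$, and comparing only against $\varepsilon^{N-2}$ is insufficient; you must also verify $2N-\mu-q(N-2)<N-\frac{\mu}{2}$, i.e.\ $q>\frac{2N-\mu}{2(N-2)}$. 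This does follow from the hypotheses (for $\mu\ge4$ it holds for every $q>1$ since $N\ge3$; for $\mu<4$ it is implied by $N>\frac{2(q+1)-\mu}{q-1}$), so the conclusion stands, but the check is needed and the paper carries it out explicitly. A second, smaller point: the maximizer $t_0(\varepsilon,\lambda)$ is not bounded away from $0$ uniformly in $\lambda$ (it tends to $0$ as $\lambda\to\infty$), so your displayed upper bound cannot be quoted verbatim in case (2); the standard fix, which the paper uses, is to show directly that $\sup_{t\ge0}I_\lambda(tu_\varepsilon)\to0$ as $\lambda\to\infty$ for fixed $\varepsilon$.
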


Analogously, we have the existence result for the Choquard equation with Sobolev critical exponent and subcritical nonlocal perturbation.
\begin{thm}\label{EXS3}
Assume that $1< q<2_{\mu}^{\ast}$, $N\geq3$ and $0<\mu<N$. Then, problem \begin{equation}
\left\{\begin{array}{l}
\displaystyle-\Delta u
=\lambda \Big(\int_{\Omega}\frac{|u|^{q}}{|x-y|^{\mu}}dy\Big)|u|^{q-2}u+ |u|^{2^{\ast}-2}u\hspace{4.14mm}\mbox{in}\hspace{1.14mm} \Omega,\\
\displaystyle u\in H_{0}^{1}(\Omega)
\end{array}
\right.
\end{equation}
has at least one nontrivial solution provided that either
\begin{itemize}
\item[(1).] $N>\frac{2(q+1)-\mu}{q-1}$ and $\lambda>0$, or
\item[(2).] $N\leq\frac{2(q+1)-\mu}{q-1}$ and $\lambda$ is sufficiently large.
\end{itemize}
\end{thm}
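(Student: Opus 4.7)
\section*{Proof proposal for Theorem \ref{EXS3}}

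The plan is a Brezis--Nirenberg style mountain pass argument on the functional
\[
I_\lambda(u)=\tfrac12\!\int_\Omega|\nabla u|^2\,dx-\tfrac{\lambda}{2q}\!\int_\Omega\!\int_\Omega\frac{|u(x)|^q|u(y)|^q}{|x-y|^\mu}\,dx\,dy-\tfrac{1}{2^*}\!\int_\Omega|u|^{2^*}dx,
\]
defined on $H^1_0(\Omega)$. First I would check the mountain pass geometry: since $1<q<2_\mu^*<2^*/2\cdot 2=2^*$, by the Hardy--Littlewood--Sobolev inequality the nonlocal term is controlled by $\|u\|^{2q}$ with $2q>2$, so $I_\lambda(u)\geq \tfrac14\|u\|^2$ for $\|u\|$ small, and $I_\lambda(tu_0)\to-\infty$ as $t\to\infty$ for any fixed nontrivial $u_0\geq 0$ (driven by the critical Sobolev term).

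Next I would establish the local Palais--Smale condition below the standard Brezis--Nirenberg threshold $c^*=\tfrac1N S^{N/2}$. Here $S$ (not $S_{H,L}$) is the relevant constant because the critical term is the local Sobolev one. The argument is standard: a $(\mathrm{PS})_c$ sequence is bounded (combine $I_\lambda(u_n)$ and $\langle I_\lambda'(u_n),u_n\rangle$, using $q>1$ and $2q<2\cdot 2_\mu^*<2\cdot 2^*$), and passes weakly to a critical point $u_0$. Because $q<2_\mu^*$ the nonlocal perturbation is \emph{subcritical}, so by HLS combined with compact embeddings $H^1_0\hookrightarrow L^s$ for $s<2^*$, the nonlocal term converges strongly. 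The usual Brezis--Lieb splitting of the critical Sobolev term then forces $\|u_n-u_0\|_{2^*}\to 0$ provided $c<c^*$.

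The heart of the proof, and the main obstacle, is showing
\[
\sup_{t\ge 0} I_\lambda(t\,u_\varepsilon)<\tfrac1N S^{N/2}
\]
for the truncated Talenti bubble $u_\varepsilon=\varphi\, U_\varepsilon$, with $U_\varepsilon(x)=[N(N-2)]^{(N-2)/4}\varepsilon^{(N-2)/2}(\varepsilon^2+|x|^2)^{-(N-2)/2}$ and $\varphi\in C_c^\infty(\Omega)$ a cutoff equal to $1$ near $0\in\Omega$. From \cite{BN} we have $\int|\nabla u_\varepsilon|^2=S^{N/2}+O(\varepsilon^{N-2})$ and $\int|u_\varepsilon|^{2^*}=S^{N/2}+O(\varepsilon^N)$. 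The remaining calculation is to compute the leading order of
\[
J_\varepsilon:=\int_\Omega\!\int_\Omega\frac{|u_\varepsilon(x)|^q|u_\varepsilon(y)|^q}{|x-y|^\mu}\,dx\,dy.
\]
A change of variables $x=\varepsilon x'$, $y=\varepsilon y'$ on the inner part where $\varphi\equiv 1$ gives $J_\varepsilon=C_{q,N,\mu}\,\varepsilon^{\,2N-\mu-(N-2)q}+\text{lower order}$, with $C_{q,N,\mu}>0$ (HLS ensures convergence of the rescaled double integral when $q>\frac{2N-\mu}{2(N-2)}$, and otherwise one truncates the outer tail to generate the same leading power). Maximizing $g(t)=\tfrac{t^2}{2}\|u_\varepsilon\|^2-\tfrac{t^{2^*}}{2^*}\|u_\varepsilon\|_{2^*}^{2^*}$ in $t$ and then subtracting the nonlocal contribution at that approximate maximizer yields
\[
\sup_{t\ge 0}I_\lambda(tu_\varepsilon)\le \tfrac1N S^{N/2}+O(\varepsilon^{N-2})-c\,\lambda\,\varepsilon^{\,2N-\mu-(N-2)q},
\]
with $c>0$ independent of $\varepsilon,\lambda$.

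Finally I compare exponents. The condition $N>\frac{2(q+1)-\mu}{q-1}$ is algebraically equivalent to $q(N-2)>N+2-\mu$, i.e.\ $2N-\mu-(N-2)q<N-2$, so the nonlocal correction beats the truncation error uniformly in $\lambda>0$ for $\varepsilon$ small: this gives case (1). In case (2), where $2N-\mu-(N-2)q\ge N-2$, one can no longer win in $\varepsilon$ alone, but for any fixed $\varepsilon$ the nonlocal term grows linearly in $\lambda$ while the error $O(\varepsilon^{N-2})$ is fixed; hence choosing $\lambda$ large makes the supremum drop below $c^*$. In both cases the mountain pass theorem produces a critical point at a level in $(0,c^*)$, yielding a nontrivial solution. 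The only delicate point is justifying the leading-order expansion of $J_\varepsilon$ in the borderline range of $q$, which is handled by splitting the domain of integration into the region $\{|x|\lesssim\varepsilon\}$ (where the rescaling is exact) and the complementary tails (whose contribution is of strictly lower order and does not alter the power of $\varepsilon$).
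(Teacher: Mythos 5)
Your proposal is correct and follows essentially the same route as the paper: mountain pass geometry, a local Palais--Smale condition below the Sobolev level $\tfrac1N S^{N/2}$, and the estimate $\sup_{t\ge0}J_\lambda(tu_\varepsilon)<\tfrac1N S^{N/2}$ obtained by combining the standard expansions of $\int_\Omega|\nabla u_\varepsilon|^2$ and $\int_\Omega|u_\varepsilon|^{2^*}$ with the lower bound $O(\varepsilon^{\,2N-\mu-(N-2)q})$ for the nonlocal perturbation and then comparing exponents via $q(N-2)>N+2-\mu$. The only cosmetic difference is in case (2), where the paper argues rigorously that the maximizer $t_\lambda\to0$ as $\lambda\to\infty$ (hence $\max_{t\ge0}J_\lambda(tu_\varepsilon)\to0$), which is the precise form of your ``grows linearly in $\lambda$'' heuristic.
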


As observed by Ambrosetti, Brezis and Cerami in \cite{ABC}, the combination of a critical term and sublinear term influence the existence and multiplicity of the equation \eqref{local.S1} greatly. Here we can also establish a similar result for the critical Choquard equation under the perturbation of both sublinear and suplinear subcritical terms. Consider the following equation
\begin{equation}\label{SCCE2}
\left\{\begin{array}{l}
\displaystyle-\Delta u
=\Big(\int_{\Omega}\frac{|u|^{2_{\mu}^{\ast}}}{|x-y|^{\mu}}dy\Big)|u|^{2_{\mu}^{\ast}-2}u+u^{p}+\lambda u^{q}\hspace{4.14mm}\mbox{in}\hspace{1.14mm} \Omega,\\
\displaystyle u\in H_{0}^{1}(\Omega),
\end{array}
\right.
\end{equation}
we can draw the following conclusions.
\begin{thm}\label{EXS4}
Assume that $0<q<1$, $1<p<2^*-1$, $N\geq3$ and $0<\mu<N$. Then, there exists $0<\Lambda<\infty$ such that
\begin{itemize}
 		\item [(1).] problem \eqref{SCCE2} has no positive solution for $\lambda>\Lambda$;
\item[(2).] problem \eqref{SCCE2} has a minimal positive solution $u_{\lambda}$ for any $0<\lambda<\Lambda$ and the family of minimal solutions is increasing with respect to $\lambda$;
\item [(3).] problem \eqref{SCCE2} has at least two positive solutions if $0<\lambda<\Lambda$.
%\item [(4).]problem \eqref{SCCE2} has at least one positive solution if $\lambda=\Lambda$;
\end{itemize}
\end{thm}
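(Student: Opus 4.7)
The plan is to follow the classical strategy of Ambrosetti–Brezis–Cerami \cite{ABC}, adapted to the nonlocal critical setting. Set
$$\Lambda := \sup\{\lambda>0 : \eqref{SCCE2}\text{ admits a positive weak solution}\}.$$
For (1), I would test any positive solution $u$ of \eqref{SCCE2} against the principal Dirichlet eigenfunction $\varphi_1>0$ of $-\Delta$ on $\Omega$ with eigenvalue $\lambda_1$. Discarding the (nonnegative) nonlocal critical term gives
$$\lambda_1\int_\Omega u\varphi_1\,dx \;\geq\; \int_\Omega(u^p+\lambda u^q)\varphi_1\,dx,$$
and the elementary minimization $\min_{t>0}(t^{p-1}+\lambda t^{q-1})=c_{p,q}\lambda^{(p-1)/(p-q)}$ yields the pointwise bound $u^p+\lambda u^q\geq c_{p,q}\lambda^{(p-1)/(p-q)}u$, forcing $\lambda_1\geq c_{p,q}\lambda^{(p-1)/(p-q)}$ and hence $\Lambda<\infty$. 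That $\Lambda>0$ is obtained by the sub/supersolution method: for small $\lambda$, $\varepsilon\varphi_1$ is a subsolution (since $q<1$ makes the sublinear term dominate for small $\varepsilon$), while a supersolution is produced by solving $-\Delta w=K$ with $K$ large, or via a truncation argument.

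For (2), monotone iteration between the ordered sub- and supersolutions yields the minimal positive solution $u_\lambda$ for every $\lambda\in(0,\Lambda)$, since the right-hand side of \eqref{SCCE2} is increasing in $u\geq 0$. The monotonicity $\lambda\mapsto u_\lambda$ is then immediate: for $0<\lambda<\lambda'<\Lambda$ the minimal solution $u_{\lambda'}$ is a supersolution for the problem with parameter $\lambda$, so iteration against $\varepsilon\varphi_1$ produces a solution $\leq u_{\lambda'}$, which by minimality coincides with $u_\lambda$.

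For (3), I would look for a second positive solution of the form $u_\lambda+v$ with $v\geq 0$, $v\not\equiv 0$. Substituting into \eqref{SCCE2} and subtracting the equation for $u_\lambda$ produces a variational problem on $H_0^1(\Omega)$ for $v$ whose functional $J_\lambda$ satisfies: $v=0$ is a strict local minimum (the linearization at $u_\lambda$ is coercive because the minimal solution is semi-stable, i.e.\ the first eigenvalue of the linearized operator is nonnegative), and $J_\lambda(tU_\varepsilon)\to -\infty$ as $t\to\infty$ for the concentrating family $U_\varepsilon$ cut off near an interior point of $\Omega$ and built from the extremizer $\tilde U$ of Proposition \ref{ExFu}. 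The mountain pass theorem then produces a Palais–Smale sequence at a level $c_\lambda>0$.

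The decisive step, and the main obstacle, is establishing
$$c_\lambda \;<\; c^*:=\frac{N-\mu+2}{2(2N-\mu)}\,S_{H,L}^{(2N-\mu)/(N-\mu+2)},$$
the compactness threshold at which (PS) fails due to the noncompact action of dilations on the extremizers of Proposition \ref{ExFu}. As in \cite{BN, GY}, one would estimate $\sup_{t\geq 0}J_\lambda(u_\lambda+tU_\varepsilon)$ asymptotically as $\varepsilon\to 0^+$; the subtlety is that the binomial expansion of $(u_\lambda+tU_\varepsilon)^{2_{\mu}^{\ast}}$ under the Riesz convolution produces cross terms of the form
$$\int_\Omega\int_\Omega\frac{u_\lambda^a(x)\,U_\varepsilon^b(y)}{|x-y|^{\mu}}\,dx\,dy,$$
which must be handled by Hardy–Littlewood–Sobolev-type estimates to isolate a negative leading correction that beats the lower-order contributions of $u^p+\lambda u^q$; as in \cite{BN}, the calculus is dimension-dependent but the mass of $u_\lambda$ is expected to generate the required strict inequality throughout $(0,\Lambda)$. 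Once $c_\lambda<c^*$ is secured, a concentration-compactness argument in the spirit of \cite{GY} recovers (PS) at level $c_\lambda$, and standard regularity together with the strong maximum principle upgrade the resulting critical point $v$ to a positive function, yielding the second positive solution $u_\lambda+v$ of \eqref{SCCE2}.
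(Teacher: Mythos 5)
Your overall architecture is the right one (it is the Ambrosetti--Brezis--Cerami scheme the paper follows: define $\Lambda$ as a supremum, test against $e_1$ for the upper bound, sub/supersolutions and monotone iteration for the minimal branch, then a translated mountain-pass problem for the second solution), and parts (1) and (2) are essentially the paper's argument. But there are two genuine gaps in part (3). First, you justify that $v=0$ is a local minimum of the translated functional by asserting that ``the linearization at $u_\lambda$ is coercive because the minimal solution is semi-stable.'' Semi-stability gives only that the first eigenvalue of the linearized operator is \emph{nonnegative}, not positive; this yields no strict local minimum and the argument as written fails (and proving strict stability of the minimal branch for all $\lambda<\Lambda$ is itself nontrivial). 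The paper avoids this entirely: it fixes $\lambda'<\lambda<\lambda''$, produces a solution $u_0$ with $u_{\lambda'}<u_0<u_{\lambda''}$ by minimizing a truncated functional, uses the Hopf lemma to get an order neighbourhood in $C_0^1(\Omega)$, concludes that $u_0$ is a $C^1$-local minimizer of $J_\lambda$, and then invokes the Br\'ezis--Nirenberg ``$H^1$ versus $C^1$'' result (Lemmas \ref{EAC6}--\ref{EAC7}), which in turn requires the regularity and $L^\infty$ machinery of Lemmas \ref{EAC3}--\ref{EAC4} to pass from $H_0^1$ bounds to uniform $C^2$ bounds. None of this is replaceable by a one-line stability remark.

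Second, the step you yourself call decisive --- the strict inequality $c_\lambda<\frac{N+2-\mu}{4N-2\mu}S_{H,L}^{\frac{2N-\mu}{N+2-\mu}}$ --- is left as something ``expected,'' with a prediction that the calculus is dimension-dependent as in \cite{BN}. In the paper this is Lemma \ref{EAC10}, and the mechanism is different from what you describe: one does not need a delicate dimension-by-dimension expansion. Using $(a+b)^{p}\geq a^{p}+b^{p}+pa^{p-1}b$ inside the Riesz convolution and the fact that the local minimizer satisfies $u_{0}\geq C_{1}>0$ on $B_\delta$, the single cross term
$$\int_{B_{\delta}}\int_{B_{\delta}}\frac{|u_{\varepsilon}(x)|^{2_{\mu}^{\ast}}|u_{\varepsilon}(y)|^{2_{\mu}^{\ast}-1}}{|x-y|^{\mu}}\,dx\,dy\;\geq\;O\big(\varepsilon^{\frac{N-2}{2}}\big)$$
supplies a negative correction of order $\varepsilon^{(N-2)/2}$, which dominates the error terms $O(\varepsilon^{N-2})$ and $O(\varepsilon^{N-\mu/2})$ uniformly for all $N\geq3$; the perturbations $u^{p}+\lambda u^{q}$ play no role in this estimate. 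Without carrying out some version of this computation your proof of (3) is incomplete. Two smaller points: the local $(PS)$ condition is established in the paper only under the contradiction hypothesis that $0$ is the sole critical point of $\bar J_\lambda$ (Lemma \ref{EAC9}), and since $0$ is merely a local minimum the mountain-pass level may be zero, which is why the paper invokes the Ghoussoub--Preiss refinement \cite{GP} in that case.
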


From the statements above, we had completely studied how the different perturbation will affect the existence results. In fact, Theorem \ref{EXS} states the existence under a superlinear perturbation while Theorem \ref{EXS2} and Theorem \ref{EXS3} focuses on the nonlocal perturbation both for the critical problem in the sense of the Hardy-Littlewood-Sobolev inequality and the classical Sobolev inequality. Finally Theorem \ref{EXS4} says that how the appearance of sublinear perturbation will lead to the existence results.

Next we are going to investigate the existence of infinitely many solutions for the critical Choquard equation \eqref{CCE1} under the effect of only a sublinear local perturbation.

\begin{thm}\label{MEXS}
Assume that $0<q<1$, $N\geq3$ and $0<\mu<N$. Then, there exists $\lambda^{\ast}>0$ such that, for every $0<\lambda<\lambda^{\ast}$, problem \eqref{CCE1} has a sequence of solutions $\{u_{n}\}\subset H_{0}^{1}(\Omega)$ such that $J_{\lambda}(u_{n})\rightarrow0$, $n\rightarrow\infty$.
\end{thm}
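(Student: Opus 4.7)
The natural energy functional for \eqref{CCE1} is
$$J_\lambda(u)=\frac{1}{2}\int_\Omega|\nabla u|^2\,dx-\frac{1}{2\cdot 2_\mu^*}\int_\Omega\int_\Omega\frac{|u(x)|^{2_\mu^*}|u(y)|^{2_\mu^*}}{|x-y|^\mu}\,dx\,dy-\frac{\lambda}{q+1}\int_\Omega|u|^{q+1}\,dx,$$
which is even in $u$. Because the critical Choquard term is of homogeneity $2\cdot 2_\mu^*>2$, the functional is unbounded below, so the plan is to recover boundedness via a truncation and then apply the symmetric critical point theorem of Clark/Kajikiya for functionals bounded below, which is tailor made for sublinear perturbations.

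Using the Sobolev and Hardy-Littlewood-Sobolev inequalities together with the definition of $S_{H,L}$, one has $J_\lambda(u)\ge g(\|u\|)$ with $g(t):=\frac{1}{2}t^2-\frac{1}{2\cdot 2_\mu^*}S_{H,L}^{-2_\mu^*}t^{2\cdot 2_\mu^*}-\frac{C\lambda}{q+1}t^{q+1}$. Since $q+1<2<2\cdot 2_\mu^*$, for every sufficiently small $\lambda$ the function $g$ has a local maximum at some $T_1(\lambda)>0$ with $g(T_1)>0$ and a local minimum at some $T_0(\lambda)\in(0,T_1)$ with $g(T_0)<0$. Pick a cut-off $\chi\in C^\infty([0,\infty),[0,1])$ that equals $1$ on $[0,T_0]$ and $0$ on $[T_1,\infty)$ and define
$$\widetilde{J}_\lambda(u):=\frac{1}{2}\|u\|^2-\frac{\chi(\|u\|)}{2\cdot 2_\mu^*}\int_\Omega\int_\Omega\frac{|u(x)|^{2_\mu^*}|u(y)|^{2_\mu^*}}{|x-y|^\mu}\,dx\,dy-\frac{\lambda}{q+1}\int_\Omega|u|^{q+1}\,dx.$$
Then $\widetilde{J}_\lambda\in C^1(H_0^1(\Omega),\R)$ is even, $\widetilde{J}_\lambda(0)=0$, and it is bounded below: on $\|u\|\ge T_1$ the critical term is killed and what remains is $\tfrac12\|u\|^2-\tfrac{\lambda}{q+1}C\|u\|^{q+1}$, and on $\|u\|\le T_1$ the estimate $\widetilde{J}_\lambda(u)\ge g(\|u\|)$ still applies. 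Crucially, $\{u:\widetilde{J}_\lambda(u)<0\}\subset\{u:\|u\|<T_0\}$, the region where $\chi\equiv1$, so any critical point of $\widetilde{J}_\lambda$ with negative energy is actually a critical point of $J_\lambda$.

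It remains to verify the hypotheses of Kajikiya's theorem for $\widetilde{J}_\lambda$. The \emph{genus condition} is easy: on any $k$-dimensional subspace $W_k\subset H_0^1(\Omega)$, equivalence of norms gives $\|u\|_{q+1}^{q+1}\ge c_k\|u\|^{q+1}$, so on a small sphere $S_{\rho_k}=\{u\in W_k:\|u\|=\rho_k\}$ the sublinear term dominates the other two and $\widetilde{J}_\lambda|_{S_{\rho_k}}<0$; this is a closed symmetric set of Krasnoselskii genus $k$ in the negative sublevel. The \emph{local Palais-Smale condition} below the critical threshold $c^*:=\tfrac{N+2-\mu}{2(2N-\mu)}S_{H,L}^{\frac{2N-\mu}{N+2-\mu}}$ is verified along the concentration-compactness lines of \cite{GY}: a PS sequence at level $c<c^*$ is shown to be bounded, a nonlocal Brezis-Lieb type identity for the Riesz convolution splits off the weak limit, and the $S_{H,L}$ lower bound on the residual energy forces the residual to vanish. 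By shrinking $\lambda^*>0$ if necessary one ensures $\inf\widetilde{J}_\lambda>-c^*$, so all PS levels produced lie in the admissible range; compactness of the derivative of the $|u|^{q+1}$ term is automatic since $q+1<2<2^*$.

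Kajikiya's theorem then yields a sequence of critical points $u_n\in H_0^1(\Omega)$ of $\widetilde{J}_\lambda$ with $\widetilde{J}_\lambda(u_n)<0$ and $\widetilde{J}_\lambda(u_n)\to 0^-$. By the sign observation above, each $u_n$ lies in $\{\|u\|<T_0\}$ where $\widetilde{J}_\lambda\equiv J_\lambda$, hence $u_n$ solves \eqref{CCE1} and $J_\lambda(u_n)\to 0$, which is the desired conclusion. The main obstacle, and the step that deserves the most care, will be the local $(PS)_c$ verification for the truncated functional: the extra factor $\chi(\|u\|)$ differentiates to a nonlocal term involving $\nabla u$ that must be controlled, and the usual Brezis-Lieb splitting for the double integral has to be coupled with the uniform bound $\|u_n\|<T_0$ inherited from the energy estimate, so that the concentration-compactness argument of \cite{GY} can be applied without modification.
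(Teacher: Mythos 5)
Your strategy is genuinely different from the paper's. The paper proves Theorem \ref{MEXS} with the Dual Fountain Theorem of Bartsch--Willem (Theorem \ref{MA3}): it works on the tails $Z_{k}$ of the eigenfunction decomposition, uses $\beta_{k}=\sup_{u\in Z_{k},\|u\|=1}|u|_{q+1}\rightarrow0$ to verify the geometric conditions $(B_{1})$--$(B_{3})$ with radii $\rho_{k}\rightarrow0$, and proves in Lemma \ref{MA4} a $(PS)_{c}^{\ast}$ condition valid for all $c<\frac{N+2-\mu}{4N-2\mu}S_{H,L}^{\frac{2N-\mu}{N+2-\mu}}-M_{0}\lambda^{\frac{2\cdot2_{\mu}^{\ast}}{2\cdot2_{\mu}^{\ast}-q-1}}$; the smallness of $\lambda$ is exactly what keeps this threshold positive so that every negative level is admissible. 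Your route --- truncating the critical term outside a ball and invoking Kajikiya's symmetric critical point theorem for even functionals bounded below --- is the classical Garc\'{\i}a Azorero--Peral device and is viable here: it buys a functional that is bounded below and coercive, so $(PS)$ sequences are automatically bounded and all minimax levels are negative, at the price of having to manage the cutoff; the Dual Fountain route works with $J_{\lambda}$ itself but needs the $(PS)^{\ast}$ machinery relative to the filtration $(Y_{n})$.

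Two points in your write-up need repair before the argument closes. First, you place the cutoff between the local minimum $T_{0}$ and the local maximum $T_{1}$ of $g$; with that choice the inclusion $\{\widetilde{J}_{\lambda}<0\}\subset\{\|u\|<T_{0}\}$ fails, because $g$ (your only lower bound for $\widetilde{J}_{\lambda}$) is still negative on the interval between $T_{0}$ and the first positive zero of $g$, where $\chi$ need not equal $1$. Take instead the two positive zeros $R_{0}<R_{1}$ of $g$ (which exist for $\lambda$ small): then $\widetilde{J}_{\lambda}(u)\geq g(\|u\|)\geq0$ for $R_{0}\leq\|u\|\leq R_{1}$, and $\widetilde{J}_{\lambda}(u)\geq\frac{1}{2}\|u\|^{2}-\frac{C\lambda}{q+1}\|u\|^{q+1}\geq0$ for $\|u\|\geq R_{1}$, so negative energy genuinely forces $\|u\|<R_{0}$, where $\chi\equiv1$. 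Second, the bare claim that $(PS)_{c}$ holds for all $c<c^{\ast}:=\frac{N+2-\mu}{4N-2\mu}S_{H,L}^{\frac{2N-\mu}{N+2-\mu}}$ is false in the sublinear regime: in the splitting $c=J_{\lambda}(u_{0})+\frac{N+2-\mu}{4N-2\mu}b$ the weak limit $u_{0}$ can have \emph{negative} energy (this is precisely where $0<q<1$ differs from Lemma \ref{ConPro}), so excluding $b\geq S_{H,L}^{\frac{2N-\mu}{N-\mu+2}}$ requires $c<c^{\ast}+\inf J_{\lambda}(u_{0})$; the paper quantifies the loss as $M_{0}\lambda^{\frac{2\cdot2_{\mu}^{\ast}}{2\cdot2_{\mu}^{\ast}-q-1}}$. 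Your safeguard $\inf\widetilde{J}_{\lambda}>-c^{\ast}$ for small $\lambda$ does close this at negative levels (since $\|u_{0}\|\leq R_{0}$ by weak lower semicontinuity, so $J_{\lambda}(u_{0})=\widetilde{J}_{\lambda}(u_{0})\geq\inf\widetilde{J}_{\lambda}$), but it must be built into the compactness lemma rather than left as a remark. Finally, cite a version of Kajikiya's theorem that requires $(PS)$ only at the negative minimax levels: global $(PS)$ for the truncated functional is not available at positive levels where the cutoff is active and concentration can still occur.
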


We would like to point out that, for the convex and concave problem with subcritical growth, it is standard to apply the Fountain Theorem to obtain  an unbounded sequence of solutions. We state the multiplicity result here for the completeness of this paper.
\begin{thm}\label{MEXS2}
Assume that $0<q<1$, $N\geq3$, $0<\mu<N$ and $1\leq p<2_{\mu}^{\ast}$. Then, for every $\lambda>0$, the problem
\begin{equation}\label{SCCE}
\left\{\begin{array}{l}
\displaystyle-\Delta u
=\Big(\int_{\Omega}\frac{|u|^{p}}{|x-y|^{\mu}}dy\Big)|u|^{p-2}u+\lambda u^{q}\hspace{4.14mm}\mbox{in}\hspace{1.14mm} \Omega,\\
\displaystyle u\in H_{0}^{1}(\Omega),
\end{array}
\right.
\end{equation}
has an unbounded sequence of solutions $\{u_{n}\}\subset H_{0}^{1}(\Omega)$ such that $J_{\lambda}(u_{n})\rightarrow\infty$, $n\rightarrow\infty$.
\end{thm}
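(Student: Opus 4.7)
The plan is to apply Bartsch's Fountain Theorem to the even $C^{1}$ energy functional
\begin{equation*}
J_{\lambda}(u)=\frac{1}{2}\int_{\Omega}|\nabla u|^{2}\,dx-\frac{1}{2p}\int_{\Omega}\int_{\Omega}\frac{|u(x)|^{p}|u(y)|^{p}}{|x-y|^{\mu}}\,dxdy-\frac{\lambda}{q+1}\int_{\Omega}|u|^{q+1}\,dx
\end{equation*}
associated with \eqref{SCCE}, viewed on $X:=H_{0}^{1}(\Omega)$ with $\|u\|^{2}=\int_{\Omega}|\nabla u|^{2}\,dx$. This framework is natural because $J_{\lambda}$ is even, the convolution term is strictly subcritical (the relevant exponent $\tfrac{2Np}{2N-\mu}$ lies in $[1,2^{\ast})$ under $p<2_{\mu}^{\ast}$), and the sublinear term is a subordinate perturbation for the growth analyses. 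Let $(e_{j})$ denote the Dirichlet eigenfunctions of $-\Delta$ on $\Omega$, and set $Y_{k}:=\bigoplus_{j=1}^{k}\R e_{j}$ and $Z_{k}:=\overline{\bigoplus_{j=k}^{\infty}\R e_{j}}$.

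First I would verify the Palais--Smale condition at every level $c\in\R$. For a $(PS)_{c}$ sequence $(u_{n})$, the combination $J_{\lambda}(u_{n})-\frac{1}{2p}\langle J_{\lambda}'(u_{n}),u_{n}\rangle=c+o(1)+o(\|u_{n}\|)$, together with $\tfrac{1}{q+1}-\tfrac{1}{2p}>0$ (because $q+1<2\leq 2p$), yields $\big(\tfrac{1}{2}-\tfrac{1}{2p}\big)\|u_{n}\|^{2}\leq c+o(1)+o(\|u_{n}\|)+\lambda C\|u_{n}\|^{q+1}$, so $(u_{n})$ is bounded since $q+1<2$. Passing to a subsequence, $u_{n}\rightharpoonup u$ in $X$ and, by Rellich--Kondrachov, $u_{n}\to u$ strongly in every $L^{r}(\Omega)$ with $r<2^{\ast}$, in particular for $r\in\{q+1,\tfrac{2Np}{2N-\mu}\}$. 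Combining this strong convergence with the Hardy--Littlewood--Sobolev inequality of Proposition \ref{HLS} and a Brezis--Lieb type decomposition for the nonlocal term upgrades weak to strong convergence in $X$.

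Next I would check the two geometric hypotheses. For \textbf{(F1)}, on the finite-dimensional space $Y_{k}$ all norms are equivalent, so there exists $\sigma_{k}>0$ with $\int_{\Omega}\int_{\Omega}\frac{|u(x)|^{p}|u(y)|^{p}}{|x-y|^{\mu}}\,dxdy\geq\sigma_{k}\|u\|^{2p}$ on $Y_{k}$; since $2p>\max\{2,q+1\}$, $J_{\lambda}|_{Y_{k}}\to-\infty$ as $\|u\|\to\infty$ and a sufficiently large $\rho_{k}$ forces $\max\{J_{\lambda}(u):u\in Y_{k},\,\|u\|=\rho_{k}\}\leq 0$. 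For \textbf{(F2)}, I would use the classical spectral fact that $\beta_{k}(s):=\sup\{\|u\|_{s}:u\in Z_{k},\,\|u\|=1\}\to 0$ as $k\to\infty$ for every $s\in[1,2^{\ast})$. Applying this at $s=\tfrac{2Np}{2N-\mu}$ together with HLS gives, on $Z_{k}$,
\begin{equation*}
J_{\lambda}(u)\geq\tfrac{1}{2}\|u\|^{2}-C\beta_{k}^{2p}\|u\|^{2p}-\lambda C\|u\|^{q+1}.
\end{equation*}
Choosing $r_{k}:=(4C\beta_{k}^{2p})^{-1/(2p-2)}$, which tends to $\infty$ as $k\to\infty$, yields on the sphere $\|u\|=r_{k}$ in $Z_{k}$ the bound $J_{\lambda}(u)\geq\tfrac{1}{4}r_{k}^{2}-\lambda C r_{k}^{q+1}\to\infty$, since $q+1<2$.

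Bartsch's Fountain Theorem then furnishes a sequence of critical points $(u_{n})\subset X$ with $J_{\lambda}(u_{n})\to\infty$; since $J_{\lambda}$ is bounded on bounded sets, this forces $\|u_{n}\|\to\infty$. The main obstacle is the Palais--Smale verification for the nonlocal term: although strict subcriticality makes the argument available, one must carefully combine HLS, strong $L^{\frac{2Np}{2N-\mu}}$ convergence, and a Brezis--Lieb decomposition to ensure that both $\int_{\Omega}\int_{\Omega}\frac{|u_{n}(x)|^{p}|u_{n}(y)|^{p}}{|x-y|^{\mu}}\,dxdy$ and its G\^ateaux derivative pass to the limit. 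Everything else (evenness, geometry of $Y_{k}$ and $Z_{k}$, boundedness calibration via $\beta_{k}$) is routine once the subcritical framework is in place.
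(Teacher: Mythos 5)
Your proposal follows essentially the same route as the paper: the Fountain Theorem applied to $J_{\lambda,p}$ on the eigenfunction decomposition $Y_k$, $Z_k$, with $(A_1)$ obtained from norm equivalence on the finite-dimensional $Y_k$ (since $2p>2$), $(A_2)$ from the HLS inequality together with $\beta_k=\sup_{u\in Z_k,\|u\|=1}|u|_{\frac{2Np}{2N-\mu}}\to0$ and the choice $r_k\sim\beta_k^{-2p/(2p-2)}\to\infty$, and the $(PS)$ condition verified by the standard subcritical compactness argument that the paper leaves as "standard." If anything you are slightly more careful than the paper in bounding the sublinear term on the sphere $\|u\|=r_k$ by $\lambda C r_k^{q+1}$ with $q+1<2$; note only that both your argument and the paper's implicitly require $p>1$ (the exponent $\frac{1}{2-2p}$ and the coefficient $\frac12-\frac{1}{2p}$ degenerate at $p=1$), so the endpoint case $p=1$ of the stated theorem is not actually covered by either.
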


The main results will be proved by variational methods. For this, we introduce the energy functional associated to equation \eqref{CCE} by
$$
J_{\lambda}(u)=\frac{1}{2}\int_{\Omega}|\nabla u|^{2}dx-\frac{1}{2\cdot2_{\mu}^{\ast}}\int_{\Omega}
\int_{\Omega}\frac{|u(x)|^{2_{\mu}^{\ast}}|u(y)|^{2_{\mu}^{\ast}}}
{|x-y|^{\mu}}dxdy-\lambda\int_{\Omega}F(u)dx,
$$
 where $F(u)=\int_{0}^{u}f(t)dt$.
Then, under the assumptions in the theorems above, the Hardy-Littlewood-Sobolev inequality implies that $J_{\lambda}$ belongs to $C^{1}(H_{0}^{1}(\Omega),\R)$ with
$$
\langle J_{\lambda}^{'}(u),\varphi\rangle=\int_{\Omega}\nabla u\nabla\varphi dx-\int_{\Omega}\int_{\Omega}\frac{|u(x)|^{2_{\mu}^{\ast}}|u(y)|^{2_{\mu}^{\ast}-2}u(y)\varphi(y)}
{|x-y|^{\mu}}dxdy-\lambda\int_{\Omega}f(u)\varphi dx,\ \ \forall\varphi\in C_{0}^{\infty}(\Omega).
$$
And so $u$ is a weak solution of \eqref{CCE} if and only if $u$ is a critical point of functional $J_{\lambda}$.

We need to point out the main features of the present problem are two-fold: the first is the loss of compactness due to the appearance of the Hardy-Littlewood-Sobolev upper critical exponent which makes it difficult to verify the $(PS)$ condition. The second is the nonlocal nature of the critical Choquard equation where the convolution type nonlinearities, no longer locally defined, are totally determined by the behavior on the domain $\omega$.
This feature not only makes it difficult to verify the geometric conditions of the critical point theorems, but also causes a lot of trouble in applying the well-known Brezis-Nirenberg type arguments and showing the regularity of the solutions.

For the proof of the main results, we will show that the energy functional $J_{\lambda}$ satisfies a compactness property and has suitable geometrical features of some critical point theorems. Frequently, we will apply the Mountain Pass Theorem to study the critical Choquard equation with subcritical superlinear perturbation and obtain the existence of at least one solution for \eqref{CCE} for suitable values of $\lambda$, the space dimension $N$ and the order of the Riesz potential $\mu$. As in \cite{BN}, the key step is to use the extreme function of the best constant in Proposition \ref{ExFu} to estimate the Mountain Pass value. By showing that the Minimax value is below the level where the $(PS)$ condition holds, one can easily obtain the existence results.
The proof of Theorem \ref{EXS3} is similar to that of Theorem \ref{EXS} and the proof of Theorem \ref{EXS2} will only be sketched. For the critical Choquard equation with only sublinear perturbation, we will use the Dual Fountain Theorem established in \cite{BW} to prove that the problem \eqref{CCE} has a sequence of solutions $\{u_{n}\}\subset H_{0}^{1}(\Omega)$ such that $J_{\lambda}(u_{n})\rightarrow0$, $n\rightarrow\infty$ and we also write some lines for the subcritical nonlocal case for the completeness of the paper. For the critical Choquard equation with both sublinear and superlinear perturbation, we will follow the idea in \cite{ABC} to prove the existence of at least two positive solutions for an admissible small range of $\lambda$. Firstly, we can obtain a positive solution that is a local minimum for the functional $J_{\lambda}$ by sub- and supersolution techniques. Secondly, in order to find a second solution of \eqref{SCCE2}, we suppose that this local minimum is the only critical point of the functional $J_{\lambda}$ and then we prove a local Palais-Smale ($(PS)$ for short) condition for $c\in\R$ below a critical level related with $S_{H,L}$ defined in \eqref{S1}. Finally, we apply the Mountain Pass Theorem and its refined version in \cite{GP} to get the conclusion. In order to overcome the difficulties, we need to adjust the arguments in \cite{BN, ABC} to suit the new environment.\vspace{5mm}\\
\par
Throughout the paper,we will use the following notations: \\
\noindent $\bullet$ We denote positive constants by $C, C_{1}, C_{2}, C_{3}, \cdots$ and $\lambda>0$ is a real parameter.\\
\noindent $\bullet$ We denote the standard norm on $H_{0}^{1}(\Omega)$ by $\displaystyle\|u\|:=\Big(\int_{\Omega}|\nabla u|^{2}dx\Big)^{\frac{1}{2}}$  and  write $|\cdot|_{q}$ for the $L^{q}(\Omega)$-norm for $q\in[1,\infty]$.\\
\noindent $\bullet$  We always assume $\Omega$ is a smooth bounded domain of $\mathbb{R}^N$ with $0\in\Omega$. \\
\noindent $\bullet$ Let $E$ be a real Hilbert space and $I:E \to \R$ a functional of class ${C}^1$.
 We say that $(u_n)\subset E$ is a  Palais-Smale ($(PS)$ for short) sequence at $c$ for $I$ if $(u_n)$ satisfies
$$
I(u_n)\to c \,\,\, \mbox{and} \,\,\,\, I'(u_n)\to0,  \,\,\, \mbox{as} \,\,\, n\to\infty.
$$
Moreover, $I$ satisfies the $(PS)$ condition at $c$, if any $(PS)$ sequence at $c$ possesses a convergent subsequence.\vspace{5mm}\\

This paper is organized as follows: In Section 2, we investigate the Hardy-Littlewood-Sobolev critical Choquard equation perturbed by a subcritical superlinear local term. In Section 3, we consider the critical Choquard equation perturbed by a subcritical superlinear nonlocal term. In Section 4, we investigate the combined effects of the superlinear and sublinear nonlinearities on the existence and multiplicity of the critical Choquard equation. Finally, we study the existence of infinitely many solutions of the critical Choquard equation under the effect of a sublinear term.

\section{\large Perturbation with a superlinear local term}
In this section we will study the existence of solutions for the critical Choquard equation with superlinear local perturbation, i.e.
$$
\left\{\begin{array}{l}
\displaystyle-\Delta u
=\Big(\int_{\Omega}\frac{|u|^{2_{\mu}^{\ast}}}{|x-y|^{\mu}}dy\Big)|u|^{2_{\mu}^{\ast}-2}u+\lambda u^q\hspace{4.14mm}\mbox{in}\hspace{1.14mm} \Omega,\\
\displaystyle u\in H_{0}^{1}(\Omega),
\end{array}
\right.
$$
where   $1<q<2^{\ast}-1$.

By introducing the energy functional by
$$
J_{\lambda}(u)=\frac{1}{2}\int_{\Omega}|\nabla u|^{2}dx-\frac{\lambda}{q+1}\int_{\Omega} u^{q+1}dx-\frac{1}{2\cdot2_{\mu}^{\ast}}\int_{\Omega}\int_{\Omega}\frac{|u(x)|^{2_{\mu}^{\ast}}|u(y)|^{2_{\mu}^{\ast}}}
{|x-y|^{\mu}}dxdy,
$$
we check that the functional $J_{\lambda}$ satisfies the Mountain-Pass Geometry, that is
\begin{lem} \label{MP1}If $1<q<2^{\ast}-1$ and $\lambda>0$, then, the functional $J_{\lambda}$ satisfies the following properties:
\begin{itemize}
\item [(1).] There exist $\alpha,\rho>0$ such that $J_{\lambda}(u)\geq\alpha$ for $\|u\|=\rho$.
\item[(2).] There exists $e\in H_{0}^{1}(\Omega)$ with $\|e\|>\rho$ such that $J_{\lambda}(e)<0$.
\end{itemize}
\end{lem}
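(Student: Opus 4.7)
\textbf{Proof plan for Lemma \ref{MP1}.}

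For part (1), the plan is to estimate each of the three terms of $J_\lambda$ from below in terms of $\|u\|$ and then exploit that the two negative contributions have power strictly larger than $2$. The Sobolev embedding $H_0^1(\Omega)\hookrightarrow L^{q+1}(\Omega)$ (valid because $1<q<2^{*}-1$) gives $\int_{\Omega}|u|^{q+1}dx\leq C_1\|u\|^{q+1}$. For the nonlocal term, I would apply the Hardy–Littlewood–Sobolev inequality of Proposition \ref{HLS} with $t=r=\frac{2N}{2N-\mu}$ to $f=h=|u|^{2_\mu^{*}}$; since $2_\mu^{*}\cdot\frac{2N}{2N-\mu}=\frac{2N}{N-2}=2^{*}$, the Sobolev embedding $H_0^1(\Omega)\hookrightarrow L^{2^{*}}(\Omega)$ yields
$$
\int_{\Omega}\int_{\Omega}\frac{|u(x)|^{2_\mu^{*}}|u(y)|^{2_\mu^{*}}}{|x-y|^{\mu}}\,dx\,dy\leq C(N,\mu)|u|_{2^{*}}^{2\cdot2_\mu^{*}}\leq C_2\|u\|^{2\cdot 2_\mu^{*}}.
$$
Combining these,
$$
J_\lambda(u)\geq\tfrac{1}{2}\|u\|^{2}-\tfrac{\lambda C_1}{q+1}\|u\|^{q+1}-\tfrac{C_2}{2\cdot2_\mu^{*}}\|u\|^{2\cdot2_\mu^{*}}.
$$
Because $q+1>2$ and $2\cdot2_\mu^{*}>2$, the map $t\mapsto\tfrac{1}{2}t^{2}-c_1 t^{q+1}-c_2 t^{2\cdot2_\mu^{*}}$ is strictly positive on some interval $(0,\rho_0)$ with a positive maximum, so choosing $\rho>0$ small enough and setting $\alpha$ equal to the resulting lower bound gives $J_\lambda(u)\geq\alpha>0$ whenever $\|u\|=\rho$.

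For part (2), I would fix any nontrivial $u_0\in H_0^1(\Omega)$ with $u_0\geq0$ (so the integrals are strictly positive) and examine $t\mapsto J_\lambda(tu_0)$ along the ray $t>0$:
$$
J_\lambda(tu_0)=\tfrac{t^{2}}{2}\|u_0\|^{2}-\tfrac{\lambda t^{q+1}}{q+1}\int_{\Omega}u_0^{q+1}dx-\tfrac{t^{2\cdot2_\mu^{*}}}{2\cdot2_\mu^{*}}\int_{\Omega}\int_{\Omega}\frac{|u_0(x)|^{2_\mu^{*}}|u_0(y)|^{2_\mu^{*}}}{|x-y|^{\mu}}\,dx\,dy.
$$
Since $2\cdot 2_\mu^{*}>2$ and the double integral is positive, the last term dominates as $t\to\infty$ and $J_\lambda(tu_0)\to-\infty$; taking $e=tu_0$ for sufficiently large $t$ gives $\|e\|>\rho$ and $J_\lambda(e)<0$.

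Neither step presents a genuine obstacle: the only points that need care are the correct choice of HLS exponents (to land precisely on $2^{*}$ so that the Sobolev embedding can be used without loss) and the verification $2\cdot 2_\mu^{*}=\frac{2(2N-\mu)}{N-2}>2$, which is immediate from $\mu<N$. The sign of $\lambda$ plays no role in (2) because the dominant negative term is the Choquard one, while in (1) the subcriticality $q+1<2^{*}$ and $2\cdot 2_\mu^{*}>2$ together guarantee a local strict positive maximum of the one-variable bound near the origin, independent of $\lambda$.
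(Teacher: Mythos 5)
Your proposal is correct and follows essentially the same route as the paper: part (1) uses the Sobolev embedding for the local term and the Hardy--Littlewood--Sobolev inequality (with exponents landing on $2^{*}$) for the Choquard term to get the bound $J_{\lambda}(u)\geq\tfrac{1}{2}\|u\|^{2}-C_{1}\lambda\|u\|^{q+1}-C_{2}\|u\|^{2\cdot2_{\mu}^{\ast}}$, and then exploits $2<q+1<2\cdot2_{\mu}^{\ast}$; part (2) is the same scaling argument along a ray $t\mapsto tu_{0}$. The only cosmetic difference is that you take $u_{0}\geq0$ to guarantee the signs of the integrals, which is a harmless (and slightly more careful) choice than the paper's generic $u_{1}\neq0$.
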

\begin{proof} (1). By the Sobolev embedding and the Hardy-Littlewood-Sobolev inequality, for all $u\in H_{0}^{1}(\Omega)\backslash\ \{0\}$ we have
$$
J_{\lambda}(u)\geq\frac{1}{2}\|u\|^{2}-\frac{\lambda}{q+1}C_{1}\|u\|^{q+1}-\frac{1}{2\cdot2_{\mu}^{\ast}}C_{0}\|u\|^{2(\frac{2N-\mu}{N-2})}.
$$
Since $2<q+1<2\cdot2_{\mu}^{\ast}$, we can choose some $\rho$ small enough such that $J_{\lambda}(u)\geq\alpha>0$ for all $u$ satisfying $\|u\|=\rho$.

(2). For some $u_{1}\in H_{0}^{1}(\Omega)\backslash\ \{0\}$, we have
$$
J_{\lambda}(tu_{1})=\frac{t^{2}}{2}\int_{\Omega}|\nabla u_{1}|^{2}dx-\frac{\lambda t^{q+1}}{q+1}\int_{\Omega} u_{1}^{q+1}dx-\frac{t^{2\cdot2_{\mu}^{\ast}}}{2\cdot2_{\mu}^{\ast}}
\int_{\Omega}\int_{\Omega}\frac{|u_{1}(x)|^{2_{\mu}^{\ast}}|u_{1}(y)|^{2_{\mu}^{\ast}}}
{|x-y|^{\mu}}dxdy<0
$$
for $t>0$ large enough. Hence, we can take an $e:=t_{1}u_{1}$ for some $t_{1}>0$ and the conclusion (2) follows.
\end{proof}

\begin{lem} \label{WSo}
Let $1<q<2^{\ast}-1$, $\lambda>0$. If $\{u_{n}\}$ is a $(PS)_c$ sequence of $J_{\lambda}$, then
$\{u_{n}\}$ is bounded. Let $u_0\in H_{0}^{1}(\Omega)$ be the weak limit of $\{u_{n}\}$, then $u_0$ is a weak solution of problem \eqref{CCE1}.
\end{lem}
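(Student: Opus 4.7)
The argument naturally splits into two parts: the uniform bound on the Palais-Smale sequence, and the identification of the weak limit as a critical point.

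For the boundedness, I would compute the linear combination $J_{\lambda}(u_n)-\tfrac{1}{q+1}\langle J_{\lambda}'(u_n),u_n\rangle$. The $u_n^{q+1}$ contribution cancels, leaving
$$
J_{\lambda}(u_n)-\tfrac{1}{q+1}\langle J_{\lambda}'(u_n),u_n\rangle
=\tfrac{q-1}{2(q+1)}\|u_n\|^2+\Big(\tfrac{1}{q+1}-\tfrac{1}{2\cdot 2_{\mu}^{*}}\Big)\int_{\Omega}\int_{\Omega}\tfrac{|u_n(x)|^{2_{\mu}^{*}}|u_n(y)|^{2_{\mu}^{*}}}{|x-y|^{\mu}}\,dx\,dy.
$$
Both coefficients are strictly positive: the first because $q>1$, and the second because $q+1<2^{*}=\tfrac{2N}{N-2}<\tfrac{2(2N-\mu)}{N-2}=2\cdot 2_{\mu}^{*}$ (using $0<\mu<N$). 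Together with $J_{\lambda}(u_n)=c+o(1)$ and $|\langle J_{\lambda}'(u_n),u_n\rangle|\leq\varepsilon_n\|u_n\|$, this immediately yields $\|u_n\|\leq C$.

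After extracting a subsequence, I obtain $u_n\rightharpoonup u_0$ in $H_0^{1}(\Omega)$, $u_n\to u_0$ almost everywhere, and $u_n\to u_0$ strongly in $L^{s}(\Omega)$ for every $s\in[1,2^{*})$. I then fix $\varphi\in C_0^{\infty}(\Omega)$ and pass to the limit in $\langle J_{\lambda}'(u_n),\varphi\rangle=o(1)$. The term $\int_{\Omega}\nabla u_n\cdot\nabla\varphi\,dx$ converges by weak $H_0^{1}$ convergence, and $\lambda\int_{\Omega}u_n^{q}\varphi\,dx$ converges by strong $L^{q+1}$ convergence.

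The principal obstacle, and the step I expect to carry the most weight, is the convergence of the nonlocal Choquard term
$$
\int_{\Omega}\!\!\int_{\Omega}\tfrac{|u_n(x)|^{2_{\mu}^{*}}|u_n(y)|^{2_{\mu}^{*}-2}u_n(y)\varphi(y)}{|x-y|^{\mu}}\,dx\,dy.
$$
My plan is to write this as $\int_{\Omega}v_n(y)\,g_n(y)\,dy$ where $v_n(y):=\int_{\Omega}|u_n(x)|^{2_{\mu}^{*}}|x-y|^{-\mu}dx$ and $g_n:=|u_n|^{2_{\mu}^{*}-2}u_n\varphi$. Boundedness of $u_n$ in $L^{2^{*}}$ implies $|u_n|^{2_{\mu}^{*}}$ is bounded in $L^{2N/(2N-\mu)}(\Omega)$, and combining the pointwise a.e. convergence with this uniform bound gives $|u_n|^{2_{\mu}^{*}}\rightharpoonup|u_0|^{2_{\mu}^{*}}$ in $L^{2N/(2N-\mu)}$. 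Since the Riesz potential of order $N-\mu$ is a bounded linear (hence weak-to-weak continuous) operator from $L^{2N/(2N-\mu)}$ into $L^{2N/\mu}$ by the Hardy–Littlewood–Sobolev inequality (Proposition \ref{HLS}), I obtain $v_n\rightharpoonup v_0$ in $L^{2N/\mu}(\Omega)$. On the other hand, $\varphi$ has compact support, and the exponent $(2_{\mu}^{*}-1)\cdot\tfrac{2N}{2N-\mu}=\tfrac{2N(N+2-\mu)}{(N-2)(2N-\mu)}$ is strictly below $2^{*}$ for $N\geq 3$, so the strong $L^{s}$ convergence of $u_n$ for subcritical $s$ delivers $g_n\to g_0$ strongly in $L^{2N/(2N-\mu)}(\Omega)$. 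A standard weak$\times$strong argument (splitting $v_n g_n - v_0 g_0 = (v_n-v_0)g_0+v_n(g_n-g_0)$) then concludes the convergence.

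Combining all three limits yields $\langle J_{\lambda}'(u_0),\varphi\rangle=0$ for every $\varphi\in C_0^{\infty}(\Omega)$, and since $C_0^{\infty}(\Omega)$ is dense in $H_0^{1}(\Omega)$, $u_0$ is a weak solution of \eqref{CCE1}.
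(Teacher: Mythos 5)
Your proposal is correct and follows essentially the same route as the paper: the identical linear combination $J_{\lambda}(u_n)-\tfrac{1}{q+1}\langle J_{\lambda}'(u_n),u_n\rangle$ for boundedness, and the Hardy--Littlewood--Sobolev mapping property of the Riesz potential from $L^{2N/(2N-\mu)}$ to $L^{2N/\mu}$ to pass to the limit in the nonlocal term. The only (minor) difference is at the final pairing: you pair the weakly convergent Riesz potential against a factor that converges \emph{strongly} in $L^{2N/(2N-\mu)}$ after absorbing $\varphi$ (using that the resulting exponent is subcritical), whereas the paper chains two weak convergences and takes the weak limit of the product (implicitly justified by a.e.\ convergence plus boundedness); your version makes that last step slightly more transparent but does not change the argument.
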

\begin{proof} It is easy to see that there exists $C_{1}>0$ such that
$$
|J_{\lambda}(u_{n})|\leq C_{1},\ \ \
|\langle J_{\lambda}^{'}(u_{n}),\frac{u_{n}}{\|u_{n}\|}\rangle|\leq C_{1}.
$$
Then  we have
$$
\aligned
J_{\lambda}(u_{n})-\frac{1}{q+1}\langle &J_{\lambda}^{'}(u_{n}),u_{n}\rangle\\
&=(\frac{1}{2}-\frac{1}{q+1})\int_{\Omega}|\nabla u_{n}|^{2}dx+(\frac{1}{q+1}-\frac{1}{2\cdot2_{\mu}^{\ast}})\int_{\Omega}\int_{\Omega}\frac{|u_{n}(x)|^{2_{\mu}^{\ast}}|u_{n}(y)|^{2_{\mu}^{\ast}}}{|x-y|^{\mu}}dxdy\\
&\leq C_{1}(1+\|u_{n}\|).
\endaligned
$$
Since $2<q+1<2\cdot2_{\mu}^{\ast}$, we know that $\{u_{n}\}$ is bounded in $H_{0}^{1}(\Omega)$.

Up to a subsequence, there exists $u_0\in H_{0}^{1}(\Omega)$ such that $u_{n}\rightharpoonup u_0$ in $H_{0}^{1}(\Omega)$ and
$
u_{n}\rightharpoonup u_0
$
in $L^{2^{*}}(\Omega)$  as $n\rightarrow+\infty$. Then
$$
|u_{n}|^{2_{\mu}^{*}}\rightharpoonup |u_0|^{2_{\mu}^{*}} \hspace{3.14mm} \mbox{in} \hspace{3.14mm} L^{\frac{2N}{2N-\mu}}(\Omega)
$$
as $n\rightarrow+\infty$. By the Hardy-Littlewood-Sobolev inequality,
the Riesz potential defines a linear continuous map from  $L^{\frac{2N}{2N-\mu}}(\Omega)$ to $L^{\frac{2N}{\mu}}(\Omega)$,  hence
$$
|x|^{-\mu}\ast|u_{n}|^{2_{\mu}^{*}}\rightharpoonup |x|^{-\mu}\ast|u_0|^{2_{\mu}^{*}} \hspace{3.14mm} \mbox{in} \hspace{3.14mm} L^{\frac{2N}{\mu}}(\Omega)
$$
as $n\rightarrow+\infty$. Combining this with the fact that
$$
|u_{n}|^{2_{\mu}^{\ast}-2}u_{n}\rightharpoonup |u_0|^{2_{\mu}^{\ast}-2}u_0 \hspace{3.14mm} \mbox{in} \hspace{3.14mm} L^{\frac{2N}{N-\mu+2}}(\Omega),
$$
as $n\rightarrow+\infty$, we have
$$
(|x|^{-\mu}\ast|u_{n}|^{2_{\mu}^{*}})|u_{n}|^{2_{\mu}^{\ast}-2}u_{n}\rightharpoonup (|x|^{-\mu}\ast|u_0|^{2_{\mu}^{*}})|u_0|^{2_{\mu}^{\ast}-2}u_0 \hspace{3.14mm} \mbox{in} \hspace{3.14mm} L^{\frac{2N}{N+2}}(\Omega)
$$
as $n\rightarrow+\infty$. Since, for any $\varphi\in H_{0}^{1}(\Omega)$,
$$
0\leftarrow\langle J_{\lambda}^{'}(u_{n}),\varphi\rangle=\int_{\Omega}\nabla u_{n}\nabla\varphi dx
-\lambda\int_{\Omega}u_{n}^{q}\varphi dx-
\int_{\Omega}\int_{\Omega}\frac{|u_{n}(x)|^{2_{\mu}^{\ast}}|u_{n}(y)|^{2_{\mu}^{\ast}-2}u_{n}(y)\varphi(y)}{|x-y|^{\mu}}dxdy.
$$
Passing to the limit as $n\rightarrow+\infty$, we obtain
$$
\int_{\Omega}\nabla u_0\nabla\varphi dx
-\lambda\int_{\Omega}u_{0}^{q}\varphi dx-
\int_{\Omega}\int_{\Omega}\frac{|u_0(x)|^{2_{\mu}^{\ast}}|u_0(y)|^{2_{\mu}^{\ast}-2}u_0(y)\varphi(y)}{|x-y|^{\mu}}dxdy=0
$$
for any $\varphi\in H_{0}^{1}(\Omega)$, which means $u_0$ is a weak solution of the problem \eqref{CCE1}.

Finally, taking $\varphi=u_0\in H_{0}^{1}(\Omega)$ as a test function in equation \eqref{CCE1}, we have
$$
\int_{\Omega}|\nabla u_0|^{2}dx=
\lambda\int_{\Omega}u_0^{q+1}dx+
\int_{\Omega}\int_{\Omega}\frac{|u_0(x)|^{2_{\mu}^{\ast}}|u_0(y)|^{2_{\mu}^{\ast}}}{|x-y|^{\mu}}dxdy,
$$
and so, for  $1<q<2^{\ast}-1$,
$$
J_{\lambda}(u_0)=(\frac{\lambda}{2}-\frac{\lambda}{q+1})\int_{\Omega}u_0^{q+1}dx+\frac{N+2-\mu}{4N-2\mu}\int_{\Omega}\int_{\Omega}\frac{|u_0(x)|^{2_{\mu}^{\ast}}|u_0(y)|^{2_{\mu}^{\ast}}}
{|x-y|^{\mu}}dxdy\geq0.
$$
\end{proof}

The following Brezis-Lieb type lemma for the nonlocal term is proved in \cite{AC}  (the subcritical case) and \cite{GY} (the critical case).
\begin{lem} \label{BLN} Let $N\geq3$, $0<\mu<N$ and $(2N-\mu)/2N\leq p\leq2_{\mu}^{\ast}$. If $\{u_{n}\}$ is a bounded sequence in $L^{\frac{2N}{N-2}}(\Omega)$ such that $u_{n}\rightarrow u$ almost everywhere in $\Omega$ as $n\rightarrow\infty$, then the following hold,
$$
\int_{\Omega}(|x|^{-\mu}\ast |u_{n}|^{p})|u_{n}|^{p}dx-\int_{\Omega}(|x|^{-\mu}\ast |u_{n}-u|^{p})|u_{n}-u|^{p}dx\rightarrow\int_{\Omega}(|x|^{-\mu}\ast |u|^{p})|u|^{p}dx
$$
as $n\rightarrow\infty$.
\end{lem}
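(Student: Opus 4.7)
The strategy is to combine a scalar Brezis-Lieb splitting for the sequence of densities $|u_n|^p$ with the bilinear structure of the Riesz double integral that is supplied by Proposition \ref{HLS}. Since the HLS double integral is a continuous symmetric bilinear form on $L^{2N/(2N-\mu)}(\Omega)$, one can expand it along the pointwise decomposition $|u_n|^p=|u_n-u|^p+|u|^p+\text{error}$ and use HLS itself to kill every error and cross term.

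The first and main step is to show that the remainder $w_n:=|u_n|^p-|u_n-u|^p-|u|^p$ tends to zero in $L^{2N/(2N-\mu)}(\Omega)$. The sequence $\{|u_n|^p\}$ is bounded in this space because $p\le 2_{\mu}^{\ast}$ gives $2Np/(2N-\mu)\le 2N/(N-2)=2^{\ast}$, so the hypothesis that $\{u_n\}$ is bounded in $L^{2N/(N-2)}(\Omega)$ together with H\"older's inequality on the bounded domain $\Omega$ provides the bound; a.e.\ convergence $u_n\to u$ also gives $|u_n|^p\to |u|^p$ a.e. A generalized Brezis-Lieb lemma, applied via the pointwise inequality $\bigl||a+b|^p-|a|^p\bigr|\le \varepsilon|a|^p+C_\varepsilon|b|^p$ for $p\ge 1$ (or the H\"older bound $\bigl||a+b|^p-|a|^p\bigr|\le |b|^p$ for $p<1$) coupled with a Vitali-type argument, then yields
\[\|w_n\|_{L^{2N/(2N-\mu)}(\Omega)}\longrightarrow 0.\]

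Next, write $v_n:=u_n-u$ and introduce the continuous symmetric bilinear form
\[\cb(f,g):=\int_{\Omega}\bigl(|x|^{-\mu}\ast f\bigr)g\,dx\qquad\text{on }L^{2N/(2N-\mu)}(\Omega).\]
The identity $|u_n|^p=|v_n|^p+|u|^p+w_n$ and bilinearity give
\[\cb(|u_n|^p,|u_n|^p)-\cb(|v_n|^p,|v_n|^p)=\cb(|u|^p,|u|^p)+2\cb(|v_n|^p,|u|^p)+R_n,\]
where $R_n$ consists of three summands each containing a factor $w_n$; continuity of $\cb$ together with the first step forces $R_n\to 0$. Finally, $\{|v_n|^p\}$ is bounded in the reflexive space $L^{2N/(2N-\mu)}(\Omega)$ and converges a.e.\ to $0$, hence converges weakly to $0$; since $f\mapsto \cb(f,|u|^p)$ is a bounded linear functional on that space by the Hardy-Littlewood-Sobolev inequality, the cross term $\cb(|v_n|^p,|u|^p)$ also vanishes, finishing the proof.

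The principal technical point is the first step, where the classical Brezis-Lieb lemma only delivers $L^1$ convergence of the remainder, whereas here the convergence must take place in the HLS-critical space $L^{2N/(2N-\mu)}(\Omega)$; on top of that, the allowed range $(2N-\mu)/(2N)\le p\le (2N-\mu)/(N-2)$ includes values $p<1$ when $\mu>0$, which forces one to treat the sublinear and superlinear regimes of the splitting with different pointwise inequalities. Once this refined Brezis-Lieb step is in hand, the remainder of the proof is pure bookkeeping with the continuous bilinear form supplied by Proposition \ref{HLS}.
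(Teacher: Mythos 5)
Your proof is correct. The paper does not prove Lemma \ref{BLN} itself but only cites \cite{AC} (subcritical case) and \cite{GY} (critical case), and the argument in those references is essentially the one you propose: a refined Brezis--Lieb step giving convergence of the remainder $|u_n|^p-|u_n-u|^p-|u|^p$ to $0$ in $L^{\frac{2N}{2N-\mu}}(\Omega)$, followed by expansion of the Hardy--Littlewood--Sobolev bilinear form, with the error terms killed by the HLS inequality and the cross term killed by the weak convergence of $|u_n-u|^p$ to $0$ in that space.
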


In the next Lemma we prove a convergence criterion for the $ (PS) _c$ sequences which will play an important role in applying the critical point theorems.

\begin{lem}\label{ConPro} Let $1<q<2^{\ast}-1$ and $\lambda>0$. If $\{u_{n}\}$ is a $(PS)_c$ sequence of $J_{\lambda}$ with
\begin{equation}\label{C1}
c<\frac{N+2-\mu}{4N-2\mu}S_{H,L}^{\frac{2N-\mu}{N+2-\mu}},
\end{equation}
then $\{u_{n}\}$ has a convergent  subsequence.
\end{lem}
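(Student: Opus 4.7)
The plan is to follow the classical Brezis–Nirenberg compactness splitting argument, adapted to the nonlocal Choquard setting by means of the Brezis–Lieb type identity recorded in Lemma \ref{BLN}. By Lemma \ref{WSo} the sequence $\{u_n\}$ is bounded in $H_0^1(\Omega)$ and, up to a subsequence, converges weakly to some $u_0\in H_0^1(\Omega)$ which is a weak solution of \eqref{CCE1} and satisfies $J_\lambda(u_0)\geq 0$. Setting $v_n:=u_n-u_0$, my goal is to prove $\|v_n\|\to 0$.

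First I would collect the standard decompositions along the sequence. The usual Brezis–Lieb lemma in $H_0^1$ gives $\|u_n\|^2=\|v_n\|^2+\|u_0\|^2+o(1)$, while Lemma \ref{BLN} with $p=2_\mu^\ast$ yields
\[
\int_\Omega\!\!\int_\Omega\frac{|u_n(x)|^{2_\mu^\ast}|u_n(y)|^{2_\mu^\ast}}{|x-y|^\mu}\,dxdy
=\int_\Omega\!\!\int_\Omega\frac{|v_n(x)|^{2_\mu^\ast}|v_n(y)|^{2_\mu^\ast}}{|x-y|^\mu}\,dxdy
+\int_\Omega\!\!\int_\Omega\frac{|u_0(x)|^{2_\mu^\ast}|u_0(y)|^{2_\mu^\ast}}{|x-y|^\mu}\,dxdy+o(1).
\]
Since $q+1<2^\ast$, the compact Sobolev embedding gives $\int_\Omega u_n^{q+1}\,dx\to \int_\Omega u_0^{q+1}\,dx$, so the subcritical local term contributes nothing to the splitting. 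Plugging these identities into $J_\lambda(u_n)\to c$ and using that $u_0$ solves \eqref{CCE1} (so the lower-order pieces of $J_\lambda(u_0)$ cancel with the quadratic piece), I obtain
\[
\tfrac{1}{2}\|v_n\|^2-\tfrac{1}{2\cdot 2_\mu^\ast}\int_\Omega\!\!\int_\Omega\frac{|v_n(x)|^{2_\mu^\ast}|v_n(y)|^{2_\mu^\ast}}{|x-y|^\mu}\,dxdy=c-J_\lambda(u_0)+o(1).
\]
Testing $J_\lambda'(u_n)\to 0$ against $u_n$ and using $\langle J_\lambda'(u_0),u_0\rangle=0$ together with the same two Brezis–Lieb identities yields
\[
\|v_n\|^2=\int_\Omega\!\!\int_\Omega\frac{|v_n(x)|^{2_\mu^\ast}|v_n(y)|^{2_\mu^\ast}}{|x-y|^\mu}\,dxdy+o(1).
\]

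Passing to a further subsequence, write $\|v_n\|^2\to b$. Then the double integral also tends to $b$, and the Sobolev-type inequality defining $S_{H,L}$ (applied on $\Omega$, extended by zero) gives $b\geq S_{H,L}\, b^{\frac{N-2}{2N-\mu}}$, so either $b=0$ or $b\geq S_{H,L}^{\frac{2N-\mu}{N+2-\mu}}$. In the latter case, combining the two displayed limits yields
\[
c-J_\lambda(u_0)=\tfrac{N+2-\mu}{4N-2\mu}\,b\geq \tfrac{N+2-\mu}{4N-2\mu}\,S_{H,L}^{\frac{2N-\mu}{N+2-\mu}},
\]
and since $J_\lambda(u_0)\geq 0$ this contradicts the strict upper bound \eqref{C1} on $c$. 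Therefore $b=0$, i.e.\ $u_n\to u_0$ strongly in $H_0^1(\Omega)$.

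The only delicate step is the nonlocal Brezis–Lieb splitting, which we invoke as Lemma \ref{BLN}; everything else is a bookkeeping exercise once one knows $u_0$ is a weak solution with nonnegative energy. The threshold constant $\tfrac{N+2-\mu}{4N-2\mu}\,S_{H,L}^{(2N-\mu)/(N+2-\mu)}$ arises precisely as the value $\tfrac{1}{2}b-\tfrac{1}{2\cdot 2_\mu^\ast}b$ at the minimal nonzero concentration level $b=S_{H,L}^{(2N-\mu)/(N+2-\mu)}$, which is why the strict inequality \eqref{C1} is sharp for this method.
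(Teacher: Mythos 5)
Your proposal is correct and follows essentially the same route as the paper's own proof: the Brezis--Lieb splitting (including Lemma \ref{BLN} for the nonlocal term), the two limit identities from $J_{\lambda}(u_{n})\to c$ and $\langle J_{\lambda}'(u_{n}),u_{n}\rangle\to 0$, the dichotomy $b=0$ or $b\geq S_{H,L}^{\frac{2N-\mu}{N-\mu+2}}$, and the contradiction with \eqref{C1} via $J_{\lambda}(u_{0})\geq 0$. The only cosmetic difference is that you keep the equality $c-J_{\lambda}(u_{0})=\frac{N+2-\mu}{4N-2\mu}b$ where the paper drops $J_{\lambda}(u_{0})$ immediately to get an inequality; the argument is the same.
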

\begin{proof}
Let $u_0$ be the weak limit of  $\{u_{n}\}$ obtained in  Lemma \ref{WSo} and define  $v_{n}:=u_{n}-u_0$, then we know $v_{n}\rightharpoonup0$ in $H_{0}^{1}(\Omega)$ and $v_{n}\rightarrow 0$ a.e. in $\Omega$. Moreover, by the Br\'{e}zis-Lieb Lemma  in \cite{BL1} and Lemma \ref{BLN}, we know
$$
\int_{\Omega}|\nabla u_{n}|^{2}dx=\int_{\Omega}|\nabla v_{n}|^{2}dx
+\int_{\Omega}|\nabla u_0|^{2}dx+o_n(1),
$$
$$
\int_{\Omega}|u_{n}|^{q+1}dx=\int_{\Omega}|v_{n}|^{q+1}dx+\int_{\Omega}|u_0|^{q+1}dx+o_n(1)
$$
and
$$
\int_{\Omega}\int_{\Omega}\frac{|u_{n}(x)|^{2_{\mu}^{\ast}}|u_{n}(y)|^{2_{\mu}^{\ast}}}
{|x-y|^{\mu}}dxdy=\int_{\Omega}\int_{\Omega}\frac{|v_{n}(x)|^{2_{\mu}^{\ast}}|v_{n}(y)|^{2_{\mu}^{\ast}}}
{|x-y|^{\mu}}dxdy+\int_{\Omega}\int_{\Omega}\frac{|u_0(x)|^{2_{\mu}^{\ast}}|u_0(y)|^{2_{\mu}^{\ast}}}
{|x-y|^{\mu}}dxdy+o_n(1).
$$
Consequently, we have
\begin{equation}\label{C2}
\aligned
c\leftarrow J_{\lambda}(u_{n})&=\frac{1}{2}\int_{\Omega}|\nabla u_{n}|^{2}dx-\frac{\lambda}{q+1}\int_{\Omega} u_{n}^{q+1}dx-\frac{1}{2\cdot2_{\mu}^{\ast}}\int_{\Omega}\int_{\Omega}\frac{|u_{n}(x)|^{2_{\mu}^{\ast}}|u_{n}(y)|^{2_{\mu}^{\ast}}}
{|x-y|^{\mu}}dxdy\\
&=\frac{1}{2}\int_{\Omega}|\nabla v_{n}|^{2}dx-\frac{\lambda}{q+1}\int_{\Omega}v_{n}^{q+1}dx+\frac{1}{2}\int_{\Omega}|\nabla u_0|^{2}dx-\frac{\lambda}{q+1}\int_{\Omega} u_0^{q+1}dx\\
&\hspace{7mm}-\frac{1}{2\cdot2_{\mu}^{\ast}}\int_{\Omega}\int_{\Omega}\frac{|v_{n}(x)|^{2_{\mu}^{\ast}}|v_{n}(y)|^{2_{\mu}^{\ast}}}
{|x-y|^{\mu}}dxdy-\frac{1}{2\cdot2_{\mu}^{\ast}}\int_{\Omega}\int_{\Omega}\frac{|u_0(x)|^{2_{\mu}^{\ast}}|u_0(y)|^{2_{\mu}^{\ast}}}
{|x-y|^{\mu}}dxdy+o_n(1)\\
&=J_{\lambda}(u_0)+\frac{1}{2}\int_{\Omega}|\nabla v_{n}|^{2}dx-\frac{\lambda}{q+1}\int_{\Omega} v_{n}^{q+1}dx-\frac{1}{2\cdot2_{\mu}^{\ast}}\int_{\Omega}\int_{\Omega}\frac{|v_{n}(x)|^{2_{\mu}^{\ast}}|v_{n}(y)|^{2_{\mu}^{\ast}}}
{|x-y|^{\mu}}dxdy+o_n(1)\\
&\geq \frac{1}{2}\int_{\Omega}|\nabla v_{n}|^{2}dx-\frac{1}{2\cdot2_{\mu}^{\ast}}\int_{\Omega}\int_{\Omega}\frac{|v_{n}(x)|^{2_{\mu}^{\ast}}|v_{n}(y)|^{2_{\mu}^{\ast}}}
{|x-y|^{\mu}}dxdy+o_n(1),
\endaligned
\end{equation}
since $J_{\lambda}(u_0)\geq 0$ and $\displaystyle\int_{\Omega}v_{n}^{q+1}dx\rightarrow 0$, as $n\rightarrow+\infty$.
Similarly, since  $\langle J_{\lambda}^{'}(u_0),u_0\rangle=0$, we have
\begin{equation}\label{C3}
\aligned
o_n(1)&= \langle J_{\lambda}^{'}(u_{n}),u_{n}\rangle\\
& =\int_{\Omega}|\nabla u_{n}|^{2}dx-\lambda\int_{\Omega} u_{n}^{q+1}dx-\int_{\Omega}\int_{\Omega}\frac{|u_{n}(x)|^{2_{\mu}^{\ast}}|u_{n}(y)|^{2_{\mu}^{\ast}}}
{|x-y|^{\mu}}dxdy\\
&=\int_{\Omega}|\nabla v_{n}|^{2}dx-\lambda\int_{\Omega} v_{n}^{q+1}dx+\int_{\Omega}|\nabla u_0|^{2}dx-\lambda\int_{\Omega} u_0^{q+1}dx\\
&\hspace{0.5cm}-\int_{\Omega}\int_{\Omega}\frac{|v_{n}(x)|^{2_{\mu}^{\ast}}|v_{n}(y)|^{2_{\mu}^{\ast}}}
{|x-y|^{\mu}}dxdy-\int_{\Omega}\int_{\Omega}\frac{|u_0(x)|^{2_{\mu}^{\ast}}|u_0(y)|^{2_{\mu}^{\ast}}}
{|x-y|^{\mu}}dxdy+o_n(1)\\
&=\langle J_{\lambda}^{'}(u_0),u_0\rangle+\int_{\Omega}|\nabla v_{n}|^{2}dx-\lambda\int_{\Omega}v_{n}^{q+1}dx-\int_{\Omega}\int_{\Omega}\frac{|v_{n}(x)|^{2_{\mu}^{\ast}}|v_{n}(y)|^{2_{\mu}^{\ast}}}
{|x-y|^{\mu}}dxdy+o_n(1)\\
&=\int_{\Omega}|\nabla v_{n}|^{2}dx-\int_{\Omega}\int_{\Omega}\frac{|v_{n}(x)|^{2_{\mu}^{\ast}}|v_{n}(y)|^{2_{\mu}^{\ast}}}
{|x-y|^{\mu}}dxdy+o_n(1).
\endaligned
\end{equation}
From \eqref{C3},  we know there exists a nonnegative constant $b$ such that
$$
\int_{\Omega}|\nabla v_{n}|^{2}dx\rightarrow b
$$
and
$$
\int_{\Omega}\int_{\Omega}\frac{|v_{n}(x)|^{2_{\mu}^{\ast}}|v_{n}(y)|^{2_{\mu}^{\ast}}}
{|x-y|^{\mu}}dxdy\rightarrow b
$$
as $n\rightarrow+\infty$.
Thus from \eqref{C2}, we obtain
\begin{equation}\label{C4}
c\geq \frac{N+2-\mu}{4N-2\mu}b.
\end{equation}
By the definition of the best constant $S_{H,L}$ in \eqref{S1}, we have
$$
S_{H,L}\Big(\int_{\Omega}\int_{\Omega}\frac{|v_{n}(x)|^{2_{\mu}^{\ast}}|v_{n}(y)|^{2_{\mu}^{\ast}}}
{|x-y|^{\mu}}dxdy\Big)^{\frac{N-2}{2N-\mu}}\leq\int_{\Omega}|\nabla v_{n}|^{2}dx,
$$
which yields $b\geq S_{H,L}b^{\frac{N-2}{2N-\mu}}$. Thus we have either $b=0$ or $b\geq S_{H,L}^{\frac{2N-\mu}{N-\mu+2}}$. If $b\geq S_{H,L}^{\frac{2N-\mu}{N-\mu+2}}$,  then we obtain from \eqref{C4} that
$$
\frac{N+2-\mu}{4N-2\mu}S_{H,L}^{\frac{2N-\mu}{N-\mu+2}}\leq\frac{N+2-\mu}{4N-2\mu}b\leq c,
$$
which contradicts with the fact that $c<\frac{N+2-\mu}{4N-2\mu}S_{H,L}^{\frac{2N-\mu}{N+2-\mu}}$. Thus $b=0$, and
$$
\|u_{n}-u_0\|\rightarrow0
$$
as $n\rightarrow+\infty$. This completes the proof of Lemma \ref{ConPro}.
\end{proof}

\begin{lem}\label{EMP1} There exists $u_{\varepsilon}$ such that
\begin{equation}\label{C11}
\sup_{t\geq0}J_{\lambda}(tu_{\varepsilon})<\frac{N+2-\mu}{4N-2\mu}S_{H,L}^{\frac{2N-\mu}{N+2-\mu}}
\end{equation}
provided that either\\
(1). $N>\max\{\min\{\frac{2(q+3)}{q+1},2+\frac{\mu}{q+1}\},\frac{2(q+1)}{q}\}$ and $\lambda>0$, or\\
(2). $N\leq\max\{\min\{\frac{2(q+3)}{q+1},2+\frac{\mu}{q+1}\},\frac{2(q+1)}{q}\}$ and $\lambda$ is sufficiently large.
\end{lem}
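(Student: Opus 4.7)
The plan is to use a Brezis--Nirenberg type test function argument, adapted to the nonlocal Choquard setting as in \cite{BN,GY}. Since $0\in\Omega$, I fix a cutoff $\varphi\in C_c^\infty(\Omega)$ with $\varphi\equiv 1$ on a small ball $B_{2r}(0)\subset\Omega$ and take the truncated, rescaled extremal
\[
u_\varepsilon(x) = \varphi(x)\,\varepsilon^{-(N-2)/2}\,\tilde U(x/\varepsilon),
\]
where $\tilde U$ is the minimizer of $S_{H,L}$ from \eqref{REL}. Mimicking Brezis--Nirenberg combined with the Hardy--Littlewood--Sobolev inequality, as already carried out in \cite{GY}, one has
\[
A_\varepsilon := \|u_\varepsilon\|^2 = S_{H,L}^{(2N-\mu)/(N+2-\mu)} + O(\varepsilon^{N-2}),
\]
\[
B_\varepsilon := \int_\Omega\!\int_\Omega \frac{|u_\varepsilon(x)|^{2_\mu^*}|u_\varepsilon(y)|^{2_\mu^*}}{|x-y|^\mu}\,dxdy \ \ge\ S_{H,L}^{(2N-\mu)/(N+2-\mu)} - O(\varepsilon^\beta),
\]
for an explicit exponent $\beta=\beta(N,\mu)>0$ coming from the double-integral tail.

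Writing $J_\lambda(tu_\varepsilon) = g_\varepsilon(t) - \frac{\lambda t^{q+1}}{q+1}D_\varepsilon$ with $g_\varepsilon(t) = \frac{t^2}{2}A_\varepsilon - \frac{t^{2\cdot 2_\mu^*}}{2\cdot 2_\mu^*}B_\varepsilon$ and $D_\varepsilon = \int_\Omega u_\varepsilon^{q+1}\,dx$, the elementary maximization of $g_\varepsilon$ gives
\[
\max_{t\ge 0}g_\varepsilon(t) = \frac{N+2-\mu}{4N-2\mu}\cdot\frac{A_\varepsilon^{(2N-\mu)/(N+2-\mu)}}{B_\varepsilon^{(N-2)/(N+2-\mu)}} \ \le\ \frac{N+2-\mu}{4N-2\mu}S_{H,L}^{(2N-\mu)/(N+2-\mu)} + C(\varepsilon^{N-2}+\varepsilon^\beta).
\]
Mountain-pass geometry (Lemma \ref{MP1}) together with the fact that $J_\lambda(tu_\varepsilon)\to -\infty$ as $t\to\infty$ confines the sup-attaining point $t_\varepsilon$ to a compact interval $[t_1,t_2]\subset(0,\infty)$ uniform in small $\varepsilon$, whence
\[
\sup_{t\ge 0}J_\lambda(tu_\varepsilon) \ \le\ \frac{N+2-\mu}{4N-2\mu}S_{H,L}^{(2N-\mu)/(N+2-\mu)} + C(\varepsilon^{N-2}+\varepsilon^\beta) - C'\lambda D_\varepsilon.
\]

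A direct integration (with $y=x/\varepsilon$ and splitting near/far from $0$) yields the classical three-case bound
\[
D_\varepsilon \ \gtrsim\ \begin{cases}\varepsilon^{N-(N-2)(q+1)/2}, & N>\tfrac{2(q+1)}{q},\\[2pt] \varepsilon^{N/2}|\log\varepsilon|, & N=\tfrac{2(q+1)}{q},\\[2pt] \varepsilon^{(N-2)(q+1)/2}, & N<\tfrac{2(q+1)}{q}.\end{cases}
\]
The desired strict inequality \eqref{C11} then reduces to arranging $C'\lambda D_\varepsilon > C(\varepsilon^{N-2}+\varepsilon^\beta)$. In the first regime, comparing the exponent $N-(N-2)(q+1)/2$ of $D_\varepsilon$ against $N-2$ gives the threshold $N>2(q+3)/(q+1)$, while comparing against $\beta$ gives the threshold $N>2+\mu/(q+1)$; \emph{either} threshold suffices, which is exactly the source of the $\min$ inside the hypothesis, and the outer $\max$ with $2(q+1)/q$ encodes the case split in the formula for $D_\varepsilon$. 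For fixed $\lambda>0$ we then pick $\varepsilon>0$ small (hypothesis~(1)); if $N$ sits below the threshold, taking $\lambda$ large compensates (hypothesis~(2)).

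The main obstacle is the precise bookkeeping of the two competing error exponents in the sup estimate. The exponent $\beta$ from the truncation of the nonlocal double integral has to be tracked through the Hardy--Littlewood--Sobolev evaluation of $B_\varepsilon$, and it is through this exponent that the Riesz parameter $\mu$ enters the hypothesis in the form $2+\mu/(q+1)$; this is the main point at which the Choquard analysis genuinely departs from the classical Brezis--Nirenberg estimate, and the remaining computation is a careful matching of powers of $\varepsilon$.
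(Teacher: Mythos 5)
Your argument is essentially the paper's own proof of Lemma \ref{EMP1}: the same truncated Aubin--Talenti test function $u_\varepsilon=\psi U_\varepsilon$, the same estimates \eqref{C9}--\eqref{C10} imported from \cite{GY} (so your tail exponent is $\beta=N-\tfrac{\mu}{2}$), the same lower bound \eqref{C12} for $\int_\Omega u_\varepsilon^{q+1}dx$ under $N>\tfrac{2(q+1)}{q}$, the same confinement of the maximizing $t_\varepsilon$ to a compact interval followed by the elementary maximization of the leading two-term profile, and the same ``$t_\lambda\to0$ as $\lambda\to\infty$'' argument for hypothesis (2). The one point worth flagging is your justification of the inner $\min$: since the error is the \emph{sum} $C(\varepsilon^{N-2}+\varepsilon^{N-\mu/2})$ of two nonnegative contributions, the perturbation term must decay more slowly than \emph{both}, which would naturally produce $\max\{\tfrac{2(q+3)}{q+1},\,2+\tfrac{\mu}{q+1}\}$ rather than the $\min$, so ``either threshold suffices'' is the wrong direction --- but the paper's own proof asserts exactly the same $\min$ at the corresponding step, so you have reproduced its reasoning, including this weak point.
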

\begin{proof} From Theorem 1.42 in \cite{Wi}, we know
$U(x)=\frac{[N(N-2)]^{\frac{N-2}{4}}}{(1+|x|^{2})^{\frac{N-2}{2}}}$  is a minimizer for $S$, the best Sobolev constant. By Proposition \ref{ExFu}, we know that $U(x)$ is also a minimizer for $S_{H,L}$.
Assume that $B_{\delta}\subset\Omega\subset B_{2\delta}$ and let $\psi\in C_{0}^{\infty}(\Omega)$ be such that
$$
\left\{\begin{array}{l}
\displaystyle \psi(x)=\left\{\begin{array}{l}
\displaystyle 1 \hspace{13.14mm} \mbox{if}\hspace{2.14mm} x\in B_{\delta},\\
\displaystyle 0 \hspace{13.14mm} \mbox{if} \hspace{2.14mm}x\in \mathbb{R}^N \setminus\Omega,\\
\end{array}
\right.\\
\displaystyle 0\leq\psi(x)\leq1 \hspace{19.14mm} \forall x\in \mathbb{R}^N ,\\
\displaystyle |D\psi(x)|\leq C=const \hspace{13.14mm} \forall x\in \mathbb{R}^N .\\
\end{array}
\right.
$$
We define, for $\varepsilon>0$,
\begin{equation}\label{EFV}
\aligned
U_{\varepsilon}(x)&:=\varepsilon^{\frac{2-N}{2}}U(\frac{x}{\varepsilon}),\\
u_{\varepsilon}(x)&:=\psi(x)U_{\varepsilon}(x).\\
\endaligned
\end{equation}
From \cite{GY}, we know that
\begin{equation}\label{C9}
\int_{\Omega}|\nabla u_{\varepsilon}|^{2}dx
=C(N,\mu))^{\frac{N-2}{2N-\mu}\cdot\frac{N}{2}}S_{H,L}^{\frac{N}{2}}+O(\varepsilon^{N-2})
\end{equation}
and
\begin{equation}\label{C10}
\int_{\Omega}\int_{\Omega}\frac{|u_{\varepsilon}(x)|^{2_{\mu}^{\ast}}|u_{\varepsilon}(y)|^{2_{\mu}^{\ast}}}
{|x-y|^{\mu}}dxdy
\geq C(N,\mu)^{\frac{N}{2}}S_{H,L}^{\frac{2N-\mu}{2}}-O(\varepsilon^{N-\frac{\mu}{2}}).
\end{equation}
\par
\textbf{Case 1. } $N>\max\{\min\{\frac{2(q+3)}{q+1},2+\frac{\mu}{q+1}\},\frac{2(q+1)}{q}\}$.

First, by the proof of Lemma 4.1 in \cite{DH},  since $q>1$ and $N>\frac{2(q+1)}{q}$ we know  $N<(N-2)(q+1)$, thus we have
\begin{equation}\label{C12}
\int_{\Omega}u_{\varepsilon}^{q+1}dx=O(\varepsilon^{N-\frac{(N-2)(q+1)}{2}})
+O(\varepsilon^{\frac{(N-2)(q+1)}{2}})=O(\varepsilon^{N-\frac{(N-2)(q+1)}{2}}),
\end{equation}
for $\varepsilon>0$ sufficiently small.

Using the estimates in \eqref{C9}, \eqref{C10} and \eqref{C12}, we have
$$\aligned
J_{\lambda}(tu_{\varepsilon})&=\frac{t^{2}}{2}\int_{\Omega}|\nabla u_{\varepsilon}|^{2}dx-\frac{ \lambda t^{q+1}}{q+1}\int_{\Omega} |u_{\varepsilon}|^{q+1}dx-\frac{t^{2\cdot2_{\mu}^{\ast}}}{2\cdot2_{\mu}^{\ast}}
\int_{\Omega}\int_{\Omega}\frac{|u_{\varepsilon}(x)|^{2_{\mu}^{\ast}}|u_{\varepsilon}(y)|^{2_{\mu}^{\ast}}}
{|x-y|^{\mu}}dxdy\\
&\leq\frac{t^{2}}{2}\big(C(N,\mu)^{\frac{N-2}{2N-\mu}\cdot\frac{N}{2}}S_{H,L}^{\frac{N}{2}}+O(\varepsilon^{N-2})\big)
-\lambda t^{q+1}O(\varepsilon^{N-\frac{(N-2)(q+1)}{2}})\\
&\hspace{7mm}-\frac{t^{2\cdot2_{\mu}^{\ast}}}{2\cdot2_{\mu}^{\ast}}\big(C(N,\mu)^{\frac{N}{2}}S_{H,L}^{\frac{2N-\mu}{2}}-O(\varepsilon^{N-\frac{\mu}{2}})\big)\\
&:=g(t).
\endaligned
$$
It is clear that $g(t)\rightarrow -\infty$ as $t\rightarrow+\infty$. It follows that there exists $t_{\varepsilon}>0$ such that $\sup_{t>0}g(t)$ is attained at $t_{\varepsilon}$. Differentiating $g(t)$ and equaling to zero, we obtain that
$$
t_{\varepsilon}\big(C(N,\mu)^{\frac{N-2}{2N-\mu}\cdot\frac{N}{2}}S_{H,L}^{\frac{N}{2}}+O(\varepsilon^{N-2})\big)
-\lambda t^{q}O(\varepsilon^{N-\frac{(N-2)(q+1)}{2}})
-t_{\varepsilon}^{2\cdot2_{\mu}^{\ast}-1}\big(C(N,\mu)^{\frac{N}{2}}S_{H,L}^{\frac{2N-\mu}{2}}-O(\varepsilon^{N-\frac{\mu}{2}})\big)
=0
$$
and so
$$
t_{\varepsilon}<\Big(\frac{C(N,\mu)^{\frac{N-2}{2N-\mu}\cdot\frac{N}{2}}S_{H,L}^{\frac{N}{2}}+O(\varepsilon^{N-2})}
{C(N,\mu)^{\frac{N}{2}}S_{H,L}^{\frac{2N-\mu}{2}}-O(\varepsilon^{N-\frac{\mu}{2}})}\Big)^{\frac{1}{2\cdot2_{\mu}^{\ast}-2}}:=S_{H,L}(\varepsilon)
$$
and there exists $t_{0}>0$ independent of $\vr$ such that for $\varepsilon>0$ small enough
$$
t_{\varepsilon}>t_{0}.
$$

Notice that the function
$$
t\mapsto \frac{t^{2}}{2}\big(C(N,\mu)^{\frac{N-2}{2N-\mu}\cdot\frac{N}{2}}S_{H,L}^{\frac{N}{2}}+O(\varepsilon^{N-2})\big)
-\frac{t^{2\cdot2_{\mu}^{\ast}}}{2\cdot2_{\mu}^{\ast}}\big(C(N,\mu)^{\frac{N}{2}}S_{H,L}^{\frac{2N-\mu}{2}}-O(\varepsilon^{N-\frac{\mu}{2}})\big)
$$
is increasing on $[0,S_{H,L}(\varepsilon)]$, we have
$$\aligned
\max_{t\geq0}J_{\lambda}(tu_{\varepsilon})
&\leq\frac{N+2-\mu}{4N-2\mu}\Big(\frac{C(N,\mu)^{\frac{N-2}{2N-\mu}\cdot\frac{N}{2}}S_{H,L}^{\frac{N}{2}}+O(\varepsilon^{N-2})}
{\Big(C(N,\mu)^{\frac{N}{2}}S_{H,L}^{\frac{2N-\mu}{2}}-O(\varepsilon^{N-\frac{\mu}{2}})\Big)^{\frac{N-2}{2N-\mu}}}\Big)^{\frac{2N-\mu}{N+2-\mu}}
-O(\varepsilon^{N-\frac{(N-2)(q+1)}{2}})\\
&\leq\frac{N+2-\mu}{4N-2\mu}S_{H,L}^{\frac{2N-\mu}{N+2-\mu}}+O(\varepsilon^{\min\{N-2,N-\frac{\mu}{2}\}})
-O(\varepsilon^{N-\frac{(N-2)(q+1)}{2}})\\
&<\frac{N+2-\mu}{4N-2\mu}S_{H,L}^{\frac{2N-\mu}{N+2-\mu}},\\
\endaligned
$$
thanks to $t_{0}<t_{\varepsilon}<S_{H,L}(\varepsilon)$, \eqref{C12} and $N>\min\{\frac{2(q+3)}{q+1},2+\frac{\mu}{q+1}\}$.

\textbf{Case 2. } $N\leq\max\{\min\{\frac{2(q+3)}{q+1},2+\frac{\mu}{q+1}\},\frac{2(q+1)}{q}\}$.

For any fixed $\vr$ in \eqref{EFV}, from
$$
J_{\lambda}(tu_{\varepsilon})\rightarrow -\infty
$$
as $t\rightarrow+\infty$, we have that $\max_{t\geq0}J_{\lambda}(tu_{\varepsilon})$ is attained at some $t_{\lambda}>0$ and $t_{\lambda}$ satisfies
$$
t_{\lambda}\int_{\Omega}|\nabla u_{\varepsilon}|^{2}dx= \lambda t_{\lambda}^{q}\int_{\Omega} |u_{\varepsilon}|^{q+1}dx+t_{\lambda}^{2\cdot2_{\mu}^{\ast}-1}\int_{\Omega}\int_{\Omega}
\frac{|u_{\varepsilon}(x)|^{2_{\mu}^{\ast}}|u_{\varepsilon}(y)|^{2_{\mu}^{\ast}}}
{|x-y|^{\mu}}dxdy,
$$
that is
$$
\int_{\Omega}|\nabla u_{\varepsilon}|^{2}dx= \lambda t_{\lambda}^{q-1}\int_{\Omega} |u_{\varepsilon}|^{q+1}dx+t_{\lambda}^{2\cdot2_{\mu}^{\ast}-2}\int_{\Omega}\int_{\Omega}
\frac{|u_{\varepsilon}(x)|^{2_{\mu}^{\ast}}|u_{\varepsilon}(y)|^{2_{\mu}^{\ast}}}
{|x-y|^{\mu}}dxdy,
$$
thanks to $\frac{\partial J_{\lambda}(tu_{\varepsilon})}{\partial t}|_{t=t_{\lambda}}=0$. Thus $t_{\lambda}\rightarrow0$ as $\lambda\rightarrow+\infty$. Then,
$$
\max_{t\geq0}J_{\lambda}(tu_{\varepsilon})=\frac{t_{\lambda}^{2}}{2}\int_{\Omega}|\nabla u_{\varepsilon}|^{2}dx-\frac{\lambda t_{\lambda}^{q+1}}{q+1}\int_{\Omega} |u_{\varepsilon}|^{q+1}dx-\frac{t_{\lambda}^{2\cdot2_{\mu}^{\ast}}}{2\cdot2_{\mu}^{\ast}}\int_{\Omega}\int_{\Omega}
\frac{|u_{\varepsilon}(x)|^{2_{\mu}^{\ast}}|u_{\varepsilon}(y)|^{2_{\mu}^{\ast}}}
{|x-y|^{\mu}}dxdy\rightarrow0
$$
as $\lambda\rightarrow+\infty$, which easily yields the desired conclusion for the case $N\leq\max\{\min\{\frac{2(q+3)}{q+1},2+\frac{\mu}{q+1}\},\frac{2(q+1)}{q}\}$.
\end{proof}

\noindent
{\bf Proof of Theorem \ref{EXS}.}
By Lemma \ref{MP1} and the Mountain Pass Theorem without $(PS)$ condition (cf. \cite{Wi}), there
exists a $(PS)$ sequence $\{u_{n}\}$ such that $J_{\lambda}(u_{n})\rightarrow c$ and $ J_{\lambda}^{'}(u_{n})\rightarrow0$ in $H_{0}^{1}(\Omega)^{-1}$ at the minimax level
$$
c=\inf\limits_{\gamma\in\Gamma}\max\limits_{t\in[0,1]}J_{\lambda}(\gamma(t))>0,
$$
where
$$
\Gamma:=\{\gamma\in C([0,1],H_{0}^{1}(\Omega)):\gamma(0)=0,J_{\lambda}(\gamma(1))<0\}.
$$
From Lemma \ref{EMP1} and the definition of $c$, we know $c<\frac{N+2-\mu}{4N-2\mu}S_{H,L}^{\frac{2N-\mu}{N+2-\mu}}$, provided that either
\begin{itemize}
\item[(1).] $N>\max\{\min\{\frac{2(q+3)}{q+1},2+\frac{\mu}{q+1}\},\frac{2(q+1)}{q}\}$ and $\lambda>0$, or
\item[(2).] $N\leq\max\{\min\{\frac{2(q+3)}{q+1},2+\frac{\mu}{q+1}\},\frac{2(q+1)}{q}\}$ and $\lambda$ is sufficiently large.
\end{itemize}
Applying Lemma \ref{ConPro}, we know  $\{u_{n}\}$ contains a convergent subsequence. And so, we have $J_{\lambda}$ has a critical value $c\in(0, \frac{N+2-\mu}{4N-2\mu}S_{H,L}^{\frac{2N-\mu}{N+2-\mu}})$ and thus the problem \eqref{CCE1} has a nontrivial solution.  $\hfill{} \Box$

\section{\large  Perturbation with a superlinear nonlocal term}
In this section we will study the existence of solutions in the case of a nonlocal perturbation,
$$
\left\{\begin{array}{l}
\displaystyle-\Delta u
=\Big(\int_{\Omega}\frac{|u|^{2_{\mu}^{\ast}}}{|x-y|^{\mu}}dy\Big)|u|^{2_{\mu}^{\ast}-2}u+\lambda \Big(\int_{\Omega}\frac{|u|^{q}}{|x-y|^{\mu}}dy\Big)|u|^{q-2}u\hspace{4.14mm}\mbox{in}\hspace{1.14mm} \Omega,\\
\displaystyle u\in H_{0}^{1}(\Omega).
\end{array}
\right.
$$ Since the problem is set in a bounded domain, the Sobolev imbedding and the Hardy-Littlewood-Sobolev inequality imply that the integral
$$
\int_{\Omega}\int_{\Omega}\frac{|u(x)|^{q}|u(y)|^{q}}{|x-y|^{\mu}}dxdy
$$
is well defined if
$$
\frac{2N-\mu}{2N}\leq q\leq\frac{2N-\mu}{N-2}.
$$
Thus, associated to the equation  \eqref{CCE2},  we can introduce the energy functional
$$
J_{\lambda}(u)=\frac{1}{2}\int_{\Omega}|\nabla u|^{2}dx-\frac{1}{2\cdot2_{\mu}^{\ast}}\int_{\Omega}
\int_{\Omega}\frac{|u(x)|^{2_{\mu}^{\ast}}|u(y)|^{2_{\mu}^{\ast}}}
{|x-y|^{\mu}}dxdy-\frac{\lambda}{2q}\int_{\Omega}
\int_{\Omega}\frac{|u(x)|^{q}|u(y)|^{q}}
{|x-y|^{\mu}}dxdy,
$$
which belongs to $C^{1}(H_{0}^{1}(\Omega),\R)$,  and so $u$ is a weak solution of \eqref{CCE2} if and only if $u$ is a critical point of functional $J_{\lambda}$.

Similar to the proof of Lemma \ref{MP1} and Lemma \ref{WSo}, we have the following conclusions.
\begin{lem}\label{MP2} If $1<q<2_{\mu}^{\ast}$ and $\lambda>0$, then, the functional $J_{\lambda}$ satisfies the following properties:
\begin{itemize}
\item[(1).] There exist $\alpha,\rho>0$ such that $J_{\lambda}(u)\geq\alpha$ for $\|u\|=\rho$.
\item[(2).] There exists $e\in H_{0}^{1}(\Omega)$ with $\|e\|>\rho$ such that $J_{\lambda}(e)<0$.
\end{itemize}
\end{lem}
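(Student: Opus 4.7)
The plan is to follow the pattern of Lemma \ref{MP1} line by line, replacing the local subcritical term $\int_\Omega u^{q+1}\,dx$ by the nonlocal subcritical term $\int_\Omega\int_\Omega |u(x)|^q|u(y)|^q/|x-y|^\mu\,dx\,dy$, and handling it through the Hardy-Littlewood-Sobolev inequality in exactly the same spirit as the critical nonlocal term. The only real question is whether the exponents on $\|u\|$ produced in this way still sandwich the quadratic term from above and the critical term from below, and this is precisely ensured by the hypothesis $1 < q < 2_\mu^\ast$.

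For part (1), I would first apply the Hardy-Littlewood-Sobolev inequality to the subcritical double integral with $t = r = 2N/(2N-\mu)$, which yields
$$
\int_{\Omega}\int_{\Omega}\frac{|u(x)|^{q}|u(y)|^{q}}{|x-y|^{\mu}}\,dx\,dy \leq C(N,\mu)\,|u|_{2Nq/(2N-\mu)}^{2q}.
$$
Since $1<q<2_\mu^\ast$ gives $2Nq/(2N-\mu)\in(2,2^\ast]$, the Sobolev embedding bounds this by $C\|u\|^{2q}$. The same argument used in Lemma \ref{MP1} controls the critical double integral by $C_0\|u\|^{2\cdot 2_\mu^\ast}$. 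Combining them,
$$
J_{\lambda}(u)\geq \tfrac12\|u\|^{2}-\tfrac{\lambda}{2q}\,C\|u\|^{2q}-\tfrac{1}{2\cdot 2_\mu^\ast}\,C_0\|u\|^{2\cdot 2_\mu^\ast}.
$$
Because $2<2q<2\cdot 2_\mu^\ast$, the quadratic term dominates for small $\|u\|$, and choosing $\rho>0$ sufficiently small gives $J_\lambda(u)\geq\alpha>0$ on the sphere $\|u\|=\rho$.

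For part (2), I would fix any $u_{1}\in H_0^{1}(\Omega)\setminus\{0\}$ and examine
$$
J_{\lambda}(tu_{1})=\tfrac{t^{2}}{2}\|u_{1}\|^{2}-\tfrac{\lambda t^{2q}}{2q}\int_{\Omega}\int_{\Omega}\tfrac{|u_{1}(x)|^{q}|u_{1}(y)|^{q}}{|x-y|^{\mu}}\,dx\,dy-\tfrac{t^{2\cdot 2_\mu^\ast}}{2\cdot 2_\mu^\ast}\int_{\Omega}\int_{\Omega}\tfrac{|u_{1}(x)|^{2_\mu^\ast}|u_{1}(y)|^{2_\mu^\ast}}{|x-y|^{\mu}}\,dx\,dy.
$$
The two negative terms have exponents $2q$ and $2\cdot 2_\mu^\ast$, both strictly greater than $2$, and the largest exponent $2\cdot 2_\mu^\ast$ strictly exceeds $2q$; hence the critical nonlocal term dominates as $t\to\infty$ and $J_\lambda(tu_1)\to -\infty$. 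Taking $e:=t_1 u_1$ for $t_1$ sufficiently large gives the required $e$ with $\|e\|>\rho$ and $J_\lambda(e)<0$.

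There is no real obstacle here: the proof is a routine adaptation of Lemma \ref{MP1}, the only substitution being that one invokes the Hardy-Littlewood-Sobolev inequality twice rather than once, together with Sobolev embedding. The restriction $q>1$ is essential to keep the middle term superquadratic so that part (1) goes through, while $q<2_\mu^\ast$ is needed both to keep $2q<2\cdot 2_\mu^\ast$ in part (2) and to ensure the subcritical double integral is finite via Hardy-Littlewood-Sobolev.
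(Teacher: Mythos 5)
Your proposal is correct and is exactly the argument the paper intends: the paper omits the proof of this lemma, stating only that it is ``similar to the proof of Lemma \ref{MP1}'', and your adaptation (estimating the subcritical nonlocal term via the Hardy--Littlewood--Sobolev inequality with $t=r=2N/(2N-\mu)$ and then Sobolev embedding, so that $J_{\lambda}(u)\geq\frac12\|u\|^{2}-C\lambda\|u\|^{2q}-C_{0}\|u\|^{2\cdot2_{\mu}^{\ast}}$ with $2<2q<2\cdot2_{\mu}^{\ast}$) is precisely that adaptation. One small inaccuracy: $2Nq/(2N-\mu)>2$ requires $q>2-\mu/N$, which does not follow from $q>1$ alone, but this is harmless since on the bounded domain $\Omega$ the embedding $H_{0}^{1}(\Omega)\hookrightarrow L^{s}(\Omega)$ holds for every $1\leq s\leq 2^{\ast}$, and the only exponent that matters for the mountain-pass geometry is $2q>2$.
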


\begin{lem} \label{WSo2}
If $1<q<2_{\mu}^{\ast}$ and $\lambda>0$ and $\{u_{n}\}$ is a $(PS)_c$ sequence of $J_{\lambda}$, then
$\{u_{n}\}$ is bounded. Let $u_0\in H_{0}^{1}(\Omega)$ be the weak limit of $\{u_{n}\}$, then $u_0$ is a weak solution of problem \eqref{CCE2}.
\end{lem}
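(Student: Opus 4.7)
The plan is to parallel the two-part proof of Lemma \ref{WSo}, modified only to handle the nonlocal subcritical perturbation in place of the local $u^{q}$ term. The main point is to pick a multiplier that makes the $\lambda$-term cancel exactly.

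\textbf{Boundedness.} Starting from $|J_\lambda(u_n)|\leq C_1$ and $|\langle J_\lambda'(u_n),u_n/\|u_n\|\rangle|\leq C_1$, I would form the linear combination
$$
J_\lambda(u_n)-\tfrac{1}{2q}\langle J_\lambda'(u_n),u_n\rangle
=\Big(\tfrac{1}{2}-\tfrac{1}{2q}\Big)\int_\Omega|\nabla u_n|^2 dx
+\Big(\tfrac{1}{2q}-\tfrac{1}{2\cdot 2_\mu^\ast}\Big)\int_\Omega\int_\Omega\frac{|u_n(x)|^{2_\mu^\ast}|u_n(y)|^{2_\mu^\ast}}{|x-y|^\mu}dx\,dy,
$$
the subcritical nonlocal term dropping out by its $2q$-homogeneity. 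Since $1<q<2_\mu^\ast$, both coefficients are strictly positive, so the standard estimate $\|u_n\|^2\leq C(1+\|u_n\|)$ follows, hence boundedness in $H_0^1(\Omega)$.

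\textbf{Weak limit and critical nonlocal term.} Passing to a subsequence I obtain $u_n\rightharpoonup u_0$ in $H_0^1(\Omega)$, $u_n\to u_0$ a.e., and $u_n\to u_0$ strongly in $L^s(\Omega)$ for every $s\in[1,2^\ast)$. The passage to the limit in the critical nonlocal term is identical to that in Lemma \ref{WSo}: $|u_n|^{2_\mu^\ast}\rightharpoonup|u_0|^{2_\mu^\ast}$ in $L^{2N/(2N-\mu)}$, the Riesz potential yields weak convergence of $|x|^{-\mu}\ast|u_n|^{2_\mu^\ast}$ in $L^{2N/\mu}$, and combining with $|u_n|^{2_\mu^\ast-2}u_n\rightharpoonup|u_0|^{2_\mu^\ast-2}u_0$ in $L^{2N/(N-\mu+2)}$ gives weak convergence of the product in $L^{2N/(N+2)}$.

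\textbf{Subcritical nonlocal term.} The key observation is that $q<2_\mu^\ast=(2N-\mu)/(N-2)$ forces $\tfrac{2Nq}{2N-\mu}<2^\ast$, so the Rellich--Kondrachov theorem gives the strong convergence $u_n\to u_0$ in $L^{2Nq/(2N-\mu)}(\Omega)$. Consequently $|u_n|^q\to|u_0|^q$ strongly in $L^{2N/(2N-\mu)}(\Omega)$, and Hardy--Littlewood--Sobolev (Proposition \ref{HLS}) upgrades this to strong convergence of $|x|^{-\mu}\ast|u_n|^q$ in $L^{2N/\mu}(\Omega)$. A parallel strong convergence of $|u_n|^{q-2}u_n$ in the dual exponent then lets me pass to the limit in
$$
\int_\Omega\int_\Omega\frac{|u_n(x)|^q|u_n(y)|^{q-2}u_n(y)\varphi(y)}{|x-y|^\mu}dx\,dy\longrightarrow
\int_\Omega\int_\Omega\frac{|u_0(x)|^q|u_0(y)|^{q-2}u_0(y)\varphi(y)}{|x-y|^\mu}dx\,dy
$$
for every $\varphi\in C_0^\infty(\Omega)$. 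Combining this with the critical term limit in $\langle J_\lambda'(u_n),\varphi\rangle\to 0$ and the obvious $\int\nabla u_n\nabla\varphi\to\int\nabla u_0\nabla\varphi$ concludes that $u_0$ is a weak solution of \eqref{CCE2}.

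\textbf{Main obstacle.} The only nontrivial step is the critical nonlocal term, which is already handled in Lemma \ref{WSo}; the subcritical nonlocal term is strictly easier than the local $u^q$ case of Lemma \ref{WSo} because the compact Sobolev embedding at the relevant exponent $2Nq/(2N-\mu)<2^\ast$ gives genuinely strong convergence both inside the Riesz potential and in the remaining factor. Thus no new analytic obstruction appears, and the sole care required is to use the multiplier $\tfrac{1}{2q}$ (not $\tfrac{1}{q+1}$) in the boundedness step so that the $\lambda$-term cancels.
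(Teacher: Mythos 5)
Your proposal is correct and follows essentially the route the paper intends (the paper omits the proof, referring back to Lemma \ref{WSo}): the multiplier $\tfrac{1}{2q}$ is exactly the right replacement for $\tfrac{1}{q+1}$ given the $2q$-homogeneity of the nonlocal perturbation, both resulting coefficients are positive for $1<q<2_\mu^\ast$, and the passage to the limit in the subcritical nonlocal term via $\tfrac{2Nq}{2N-\mu}<2^\ast$, Rellich--Kondrachov and Hardy--Littlewood--Sobolev is the standard argument. No gaps.
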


Since $1<q<2_{\mu}^{\ast}$  it is easy to see that
$$
J_{\lambda}(u_0)=(\frac{\lambda}{2}-\frac{\lambda}{2q})\int_{\Omega}\int_{\Omega}
\frac{|u_{0}(x)|^{q}|u_{0}(y)|^{q}}{|x-y|^{\mu}}dxdy+\frac{N+2-\mu}{4N-2\mu}\int_{\Omega}\int_{\Omega}\frac{|u_0(x)|^{2_{\mu}^{\ast}}|u_0(y)|^{2_{\mu}^{\ast}}}
{|x-y|^{\mu}}dxdy\geq0.
$$
\begin{lem}\label{ConPro2}If $1<q<2_{\mu}^{\ast}$,  $\lambda>0$ and $\{u_{n}\}$ is a $(PS)_c$ sequence of $J_{\lambda}$ with
\begin{equation}\label{D1}
c<\frac{N+2-\mu}{4N-2\mu}S_{H,L}^{\frac{2N-\mu}{N+2-\mu}}.
\end{equation}
Then $\{u_{n}\}$ has a convergent  subsequence.
\end{lem}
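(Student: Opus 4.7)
The plan is to follow the same architecture as the proof of Lemma \ref{ConPro}, substituting the nonlocal subcritical term for the local term $\lambda u^{q+1}$ in all splittings. Write $v_n = u_n - u_0$, where $u_0$ is the weak limit produced by Lemma \ref{WSo2}; then $v_n \rightharpoonup 0$ in $H_0^1(\Omega)$ and $v_n \to 0$ a.e.\ in $\Omega$.

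The first step is to apply the Br\'{e}zis--Lieb lemma to $\|\nabla u_n\|_2^2$ and to apply Lemma \ref{BLN} (with $p = 2_\mu^\ast$) to the Hardy--Littlewood--Sobolev critical term, exactly as in \eqref{C2}. The new ingredient is the subcritical nonlocal term: since $1<q<2_\mu^\ast$, I would argue that
\[
\int_\Omega\int_\Omega \frac{|u_n(x)|^q|u_n(y)|^q}{|x-y|^\mu}\,dxdy \longrightarrow \int_\Omega\int_\Omega \frac{|u_0(x)|^q|u_0(y)|^q}{|x-y|^\mu}\,dxdy,
\]
which, combined with Lemma \ref{BLN}, forces the analogous vanishing statement for $v_n$. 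The convergence follows by writing the double integral as $\||x|^{-\mu}\ast |u_n|^q \cdot |u_n|^q\|_{L^1}$, using the Hardy--Littlewood--Sobolev inequality to put $|u_n|^q$ in $L^{2N/(2N-\mu)}(\Omega)$, and invoking the compact Sobolev embedding $H_0^1(\Omega)\hookrightarrow L^{2qN/(2N-\mu)}(\Omega)$, which is valid since $q<2_\mu^\ast$ makes the exponent strictly less than $2^\ast$. An analogous argument (or Lebesgue dominated convergence after picking up an a.e.\ convergent subsequence) also handles $\langle J_\lambda'(u_n),u_n\rangle$ so that the subcritical term contributes nothing to the residue in $v_n$.

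With those ingredients in place, the argument of \eqref{C2}--\eqref{C3} yields
\[
c \ge \tfrac{1}{2}\|v_n\|^2 - \tfrac{1}{2\cdot 2_\mu^\ast}\int_\Omega\!\int_\Omega \frac{|v_n(x)|^{2_\mu^\ast}|v_n(y)|^{2_\mu^\ast}}{|x-y|^\mu}\,dxdy + o_n(1)
\]
and
\[
\|v_n\|^2 = \int_\Omega\!\int_\Omega \frac{|v_n(x)|^{2_\mu^\ast}|v_n(y)|^{2_\mu^\ast}}{|x-y|^\mu}\,dxdy + o_n(1),
\]
using $J_\lambda(u_0)\ge 0$ and $\langle J_\lambda'(u_0),u_0\rangle = 0$. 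Set $b := \lim \|v_n\|^2 = \lim\int\!\int \tfrac{|v_n|^{2_\mu^\ast}|v_n|^{2_\mu^\ast}}{|x-y|^\mu}$, so $c \ge \tfrac{N+2-\mu}{4N-2\mu}\,b$.

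Finally the definition of $S_{H,L}$ in \eqref{S1} gives $b \ge S_{H,L}\, b^{(N-2)/(2N-\mu)}$, forcing either $b=0$ or $b \ge S_{H,L}^{(2N-\mu)/(N-\mu+2)}$; the latter contradicts hypothesis \eqref{D1}. Hence $b=0$, i.e.\ $u_n \to u_0$ strongly in $H_0^1(\Omega)$. The main obstacle is the first step: verifying that the subcritical Hardy--Littlewood--Sobolev term is sequentially continuous along weakly convergent sequences, which is where the subcriticality $q<2_\mu^\ast$ (together with the boundedness of $\Omega$) is used crucially through a compact embedding.
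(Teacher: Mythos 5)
Your proposal is correct and follows essentially the same route as the paper: the same decomposition $v_n=u_n-u_0$, the Br\'ezis--Lieb splitting via Lemma \ref{BLN}, the observation that the subcritical nonlocal term vanishes along $v_n$ because the Hardy--Littlewood--Sobolev inequality reduces it to the norm $|v_n^q|_{2N/(2N-\mu)}$, which tends to zero by the compact embedding $H_0^1(\Omega)\hookrightarrow L^{2qN/(2N-\mu)}(\Omega)$ (the exponent being below $2^\ast$ precisely because $q<2_\mu^\ast$), and then the standard dichotomy $b=0$ or $b\ge S_{H,L}^{(2N-\mu)/(N-\mu+2)}$ ruled out by \eqref{D1}. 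The only cosmetic difference is that you establish convergence of the full subcritical term and deduce the vanishing of the $v_n$ contribution from Lemma \ref{BLN}, whereas the paper bounds the $v_n$ contribution directly; these are equivalent.
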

\begin{proof}
Let $u_0$ be the weak limit of  $\{u_{n}\}$ obtained in  Lemma \ref{WSo2} and define  $v_{n}:=u_{n}-u_0$, then we know $v_{n}\rightharpoonup0$ in $H_{0}^{1}(\Omega)$ and $v_{n}\rightarrow 0$ a.e. in $\Omega$. Moreover, by Lemma \ref{BLN}, we know
$$
\int_{\Omega}\int_{\Omega}
\frac{|u_{n}(x)|^{q}|u_{n}(y)|^{q}}{|x-y|^{\mu}}dxdy=\int_{\Omega}\int_{\Omega}
\frac{|v_{n}(x)|^{q}|v_{n}(y)|^{q}}{|x-y|^{\mu}}dxdy+\int_{\Omega}\int_{\Omega}
\frac{|u_{0}(x)|^{q}|u_{0}(y)|^{q}}{|x-y|^{\mu}}dxdy+o_n(1).
$$
Similar to the
proof of Lemma \ref{ConPro}, we have
$$
c\leftarrow J_{\lambda}(u_{n})\geq \frac{1}{2}\int_{\Omega}|\nabla v_{n}|^{2}dx-\frac{1}{2\cdot2_{\mu}^{\ast}}\int_{\Omega}\int_{\Omega}\frac{|v_{n}(x)|^{2_{\mu}^{\ast}}|v_{n}(y)|^{2_{\mu}^{\ast}}}
{|x-y|^{\mu}}dxdy+o_n(1),
$$
since $J_{\lambda}(u_0)\geq 0$ and $$\int_{\Omega}\int_{\Omega}
\frac{|v_{n}(x)|^{q}|v_{n}(y)|^{q}}{|x-y|^{\mu}}dxdy\leq C(N,\mu)|v_{n}^{q}|_{\frac{2N}{2N-\mu}}^{2}\rightarrow 0,$$ as $n\rightarrow+\infty$.
Repeating the same arguments in the proof of Lemma \ref{ConPro}, we have
$$
\|u_{n}-u_0\|\rightarrow0
$$
as $n\rightarrow+\infty$. This ends the proof of Lemma \ref{ConPro2}.
\end{proof}

\begin{lem}\label{EMP2} Let $1<q<2_{\mu}^{\ast}$ and $u_{\varepsilon}$ as defined in \eqref{EFV}.
If one of the following conditions is holds:\\
(1) $N>\frac{2(q+1)-\mu}{q-1}$ and $\lambda>0$, \\
(2) $N\leq\frac{2(q+1)-\mu}{q-1}$ and $\lambda$ is sufficiently large,\\
then there exists $\vr$ such that
\begin{equation}\label{D2}
\sup_{t\geq0}J_{\lambda}(tu_{\varepsilon})<\frac{N+2-\mu}{4N-2\mu}S_{H,L}^{\frac{2N-\mu}{N+2-\mu}}.
\end{equation}
\end{lem}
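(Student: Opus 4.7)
The plan is to mirror the argument of Lemma \ref{EMP1}, splitting the functional along $u_\varepsilon$ as
$$J_\lambda(tu_\varepsilon)=h_\varepsilon(t)-\frac{\lambda\,t^{2q}}{2q}D_\varepsilon,$$
where
$$h_\varepsilon(t):=\frac{t^2}{2}\int_\Omega|\nabla u_\varepsilon|^2dx-\frac{t^{2\cdot2_\mu^*}}{2\cdot2_\mu^*}\int_\Omega\int_\Omega\frac{|u_\varepsilon(x)|^{2_\mu^*}|u_\varepsilon(y)|^{2_\mu^*}}{|x-y|^\mu}dxdy$$
is the purely critical part and $D_\varepsilon:=\int_\Omega\int_\Omega\frac{|u_\varepsilon(x)|^q|u_\varepsilon(y)|^q}{|x-y|^\mu}dxdy$ is the new lower-order nonlocal integral. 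For $h_\varepsilon$ I would simply reuse the computation from Lemma \ref{EMP1}: the estimates \eqref{C9}--\eqref{C10} give
$$\max_{t\geq0}h_\varepsilon(t)\leq\frac{N+2-\mu}{4N-2\mu}S_{H,L}^{\frac{2N-\mu}{N+2-\mu}}+O\!\left(\varepsilon^{\min\{N-2,\,N-\mu/2\}}\right),$$
and the maximizer $t_\varepsilon$ of $t\mapsto J_\lambda(tu_\varepsilon)$ remains in an interval $[t_0,S_{H,L}(\varepsilon)]$ uniformly in small $\varepsilon$ (the upper bound is preserved because the extra $-\lambda t^{2q}D_\varepsilon/(2q)$ term pushes the critical point to the left, while the lower bound follows because $D_\varepsilon\to 0$, as seen below).

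The new ingredient is a scaling lower bound on $D_\varepsilon$. Since $u_\varepsilon=U_\varepsilon$ on $B_\delta$ and $U_\varepsilon(x)=\varepsilon^{(2-N)/2}U(x/\varepsilon)$, the substitution $x=\varepsilon\tilde x$, $y=\varepsilon\tilde y$ yields
$$D_\varepsilon\geq \varepsilon^{2N-q(N-2)-\mu}\int_{B_{\delta/\varepsilon}}\int_{B_{\delta/\varepsilon}}\frac{|U(\tilde x)|^q|U(\tilde y)|^q}{|\tilde x-\tilde y|^\mu}d\tilde x\,d\tilde y.$$
The Case 1 hypothesis $N>\frac{2(q+1)-\mu}{q-1}$ rewrites as $q(N-2)>N+2-\mu$; I would argue this forces $|U|^q\in L^{2N/(2N-\mu)}(\R^N)$, so the Hardy-Littlewood-Sobolev inequality makes the integral over $\R^N\times\R^N$ finite and positive, and monotone convergence gives a positive limit for the rescaled integral as $\varepsilon\to0$. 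Therefore $D_\varepsilon\geq C_q\,\varepsilon^{2N-q(N-2)-\mu}$ for all small $\varepsilon$.

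Combining the two steps with $t_\varepsilon\geq t_0>0$,
$$\sup_{t\geq0}J_\lambda(tu_\varepsilon)\leq\frac{N+2-\mu}{4N-2\mu}S_{H,L}^{\frac{2N-\mu}{N+2-\mu}}+O\!\left(\varepsilon^{\min\{N-2,\,N-\mu/2\}}\right)-\frac{\lambda\,t_0^{2q}C_q}{2q}\,\varepsilon^{2N-q(N-2)-\mu},$$
and the exponent race $2N-q(N-2)-\mu<\min\{N-2,\,N-\mu/2\}$ reduces to the two inequalities $q(N-2)>N+2-\mu$ and $q(N-2)>N-\mu/2$, both of which I would verify from the Case 1 condition (since $(1,2_\mu^*)$ is the natural window). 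Thus the subtracted term beats the error as $\varepsilon\to0$ and \eqref{D2} follows for $\varepsilon$ small and any $\lambda>0$. For Case 2, I would fix $\varepsilon$ and observe that the unique maximizer $t_\lambda$ of $t\mapsto J_\lambda(tu_\varepsilon)$ satisfies
$$\|u_\varepsilon\|^2=t_\lambda^{2\cdot2_\mu^*-2}\int_\Omega\int_\Omega\frac{|u_\varepsilon|^{2_\mu^*}|u_\varepsilon|^{2_\mu^*}}{|x-y|^\mu}dxdy+\lambda\,t_\lambda^{2q-2}D_\varepsilon;$$
since $q>1$, this forces $t_\lambda\to 0$ as $\lambda\to\infty$, hence $\sup_tJ_\lambda(tu_\varepsilon)\leq\tfrac{t_\lambda^2}{2}\|u_\varepsilon\|^2\to 0$, which is well below the threshold for $\lambda$ large.

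The hard part is the scaling lower bound on $D_\varepsilon$: its validity rests on the finiteness of the Hardy-Littlewood-Sobolev integral of $|U|^q$ with itself over $\R^N\times\R^N$, which is exactly what fails for small $q$. The Case 1 dimension condition is engineered both to preserve this finiteness and to make the resulting $\varepsilon$-exponent strictly smaller than the error coming from \eqref{C9}--\eqref{C10}; outside this range the balance is restored only by letting $\lambda$ be large, which is precisely what Case 2 exploits.
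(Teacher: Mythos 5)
Your proposal is correct and follows essentially the same route as the paper: the same cut-off instanton $u_{\varepsilon}$, the same scaling lower bound $D_{\varepsilon}\geq C\varepsilon^{2N-\mu-(N-2)q}$ for the perturbation, the same localization $t_{0}<t_{\varepsilon}<S_{H,L}(\varepsilon)$ of the maximizer, the same exponent race $2N-\mu-(N-2)q<\min\{N-2,\,N-\frac{\mu}{2}\}$ (which, as you indicate, follows from $q(N-2)>N+2-\mu$ together with $q>1$ after splitting into $\mu\leq 4$ and $\mu>4$), and the same $t_{\lambda}\to 0$ argument for Case 2. The only cosmetic difference is that you bound $D_{\varepsilon}$ from below by rescaling the $B_{\delta}\times B_{\delta}$ integral directly, whereas the paper writes it as $\A-2\B-\C$ and estimates the two tail terms separately; your version is slightly more economical but the content is identical.
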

\begin{proof}
\textbf{Case 1. } $N>\frac{2(q+1)-\mu}{q-1}$.

To estimate the convolution part, we know
\begin{equation}\label{D3}
\aligned
\int_{\Omega}\int_{\Omega}\frac{|u_{\varepsilon}(x)|^{q}
|u_{\varepsilon}(y)|^{q}}
{|x-y|^{\mu}}dxdy
&\geq \int_{B_{\delta}}\int_{B_{\delta}}\frac{|u_{\varepsilon}(x)|^{q}
|u_{\varepsilon}(y)|^{q}}{|x-y|^{\mu}}dxdy\\
&=\int_{B_{\delta}}\int_{B_{\delta}}\frac{|U_{\varepsilon}(x)|^{q}
|U_{\varepsilon}(y)|^{q}}{|x-y|^{\mu}}dxdy\\
&=\int_{\Omega}\int_{\Omega}\frac{|U_{\varepsilon}(x)|^{q}
|U_{\varepsilon}(y)|^{q}}{|x-y|^{\mu}}dxdy-2\int_{\Omega\setminus B_{\delta}}\int_{B_{\delta}}\frac{|U_{\varepsilon}(x)|^{q}|U_{\varepsilon}(y)|^{q}}{|x-y|^{\mu}}dxdy\\
&\hspace{12mm}-\int_{\Omega\setminus B_{\delta}}\int_{\Omega\setminus B_{\delta}}\frac{|U_{\varepsilon}(x)|^{q}|U_{\varepsilon}(y)|^{q}}{|x-y|^{\mu}}dxdy\\
&:=\A-2\B-\C,
\endaligned
\end{equation}
where
$$
\A=\int_{\Omega}\int_{\Omega}\frac{|U_{\varepsilon}(x)|^{q}
|U_{\varepsilon}(y)|^{q}}{|x-y|^{\mu}}dxdy,\ \
\B=\int_{\Omega\setminus B_{\delta}}\int_{B_{\delta}}\frac{|U_{\varepsilon}(x)|^{q}|U_{\varepsilon}(y)|^{q}}{|x-y|^{\mu}}dxdy
$$
and
$$
\C=\int_{\Omega\setminus B_{\delta}}\int_{\Omega\setminus B_{\delta}}\frac{|U_{\varepsilon}(x)|^{q}|U_{\varepsilon}(y)|^{q}}{|x-y|^{\mu}}dxdy.
$$
We are going to estimate $\A$, $\B$ and $\C$. By direct computation, we know, for $\varepsilon<1$,
\begin{equation}\label{D4}
\aligned
\A&=\varepsilon^{-(N-2)q}[N(N-2)]^{\frac{(N-2)q}{2}}\int_{\Omega}\int_{\Omega}\frac{1}
{(1+|\frac{x}{\varepsilon}|^{2})^{\frac{(N-2)q}{2}}|x-y|^{\mu}(1+|\frac{y}{\varepsilon}|^{2})^{\frac{(N-2)q}{2}}}dxdy\\
&\geq\varepsilon^{-(N-2)q}[N(N-2)]^{\frac{(N-2)q}{2}}\int_{B_{\delta}}\int_{B_{\delta}}\frac{1}
{(1+|\frac{x}{\varepsilon}|^{2})^{\frac{(N-2)q}{2}}|x-y|^{\mu}(1+|\frac{y}{\varepsilon}|^{2})^{\frac{(N-2)q}{2}}}dxdy\\
&=\varepsilon^{-(N-2)q}[N(N-2)]^{\frac{(N-2)q}{2}}\varepsilon^{2N-\mu}\int_{B_{\frac{\delta}{\varepsilon}}}\int_{B_{\frac{\delta}{\varepsilon}}}\frac{1}
{(1+|x|^{2})^{\frac{(N-2)q}{2}}|x-y|^{\mu}
(1+|y|^{2})^{\frac{(N-2)q}{2}}}dxdy\\
&\geq O(\varepsilon^{2N-\mu-(N-2)q})\int_{B_{\delta}}\int_{B_{\delta}}\frac{1}
{(1+|x|^{2})^{\frac{(N-2)q}{2}}|x-y|^{\mu}
(1+|y|^{2})^{\frac{(N-2)q}{2}}}dxdy\\
&=O(\varepsilon^{2N-\mu-(N-2)q}),
\endaligned
\end{equation}
\begin{equation}\label{D5}
\aligned
\B&=\varepsilon^{-(N-2)q}[N(N-2)]^{\frac{(N-2)q}{2}}\int_{\Omega\setminus B_{\delta}}\int_{B_{\delta}}\frac{1}
{(1+|\frac{x}{\varepsilon}|^{2})^{\frac{(N-2)q}{2}}|x-y|^{\mu}(1+|\frac{y}{\varepsilon}|^{2})^{\frac{(N-2)q}{2}}}dxdy\\
&=\varepsilon^{(N-2)q}[N(N-2)]^{\frac{(N-2)q}{2}}\int_{\Omega\setminus B_{\delta}}\int_{B_{\delta}}\frac{1}{(\varepsilon^{2}+|x|^{2})^{\frac{(N-2)q}{2}}|x-y|^{\mu}(\varepsilon^{2}+|y|^{2})^{\frac{(N-2)q}{2}}}dxdy\\
&\leq O(\varepsilon^{(N-2)q})\Big(\int_{\Omega\setminus B_{\delta}}\frac{1}
{(\varepsilon^{2}+|x|^{2})^{\frac{(N-2)qN}{2N-\mu}}}dx\Big)^{\frac{2N-\mu}{2N}}\Big(\int_{B_{\delta}}\frac{1}
{(\varepsilon^{2}+|y|^{2})^{\frac{(N-2)qN}{2N-\mu}}}dy\Big)^{\frac{2N-\mu}{2N}}\\
%&\leq O(\varepsilon^{(N-2)q})\Big(\int_{\Omega\setminus B_{\delta}}\frac{1}
%{|x|^{\frac{2(N-2)qN}{2N-\mu}}}dx\Big)^{\frac{2N-\mu}{2N}}\Big(\int_{0}^{\delta}\frac{r^{N-1}}
%{(\varepsilon^{2}+r^{2})^{\frac{(N-2)qN}{2N-\mu}}}dr\Big)^{\frac{2N-\mu}{2N}}\\
&=O(\varepsilon^{\frac{2N-\mu}{2}})\Big(\int_{0}^{\frac{\delta}{\varepsilon}}\frac{z^{N-1}}
{(1+z^{2})^{\frac{(N-2)qN}{2N-\mu}}}dz\Big)^{\frac{2N-\mu}{2N}}\\
&\leq O(\varepsilon^{\frac{2N-\mu}{2}})\Big(\int_{0}^{+\infty}\frac{z^{N-1}}
{(1+z^{2})^{\frac{(N-2)qN}{2N-\mu}}}dz\Big)^{\frac{2N-\mu}{2N}}.\\
%&=O(\varepsilon^{\frac{2N-\mu}{2}})
\endaligned
\end{equation}
Since $N>\frac{2(q+1)-\mu}{q-1}>2+\frac{4-\mu}{2(q-1)}$ if $\mu<4$ and $N>2+\frac{4-\mu}{2(q-1)}$ if $\mu\geq4$, we know $\frac{2(N-2)qN}{2N-\mu}>N$, therefore
\begin{equation}\label{D52}
\B\leq O(\varepsilon^{\frac{2N-\mu}{2}}).
\end{equation}
\begin{equation}\label{D6}
\aligned
\C&=\varepsilon^{-(N-2)q}[N(N-2)]^{\frac{(N-2)q}{2}}\int_{\Omega\setminus B_{\delta}}\int_{\Omega\setminus B_{\delta}}\frac{1}
{(1+|\frac{x}{\varepsilon}|^{2})^{\frac{(N-2)q}{2}}|x-y|^{\mu}(1+|\frac{y}{\varepsilon}|^{2})^{\frac{(N-2)q}{2}}}dxdy\\
&=\varepsilon^{(N-2)q}[N(N-2)]^{\frac{(N-2)q}{2}}\int_{\Omega\setminus B_{\delta}}\int_{\Omega\setminus B_{\delta}}\frac{1}{(\varepsilon^{2}+|x|^{2})^{\frac{(N-2)q}{2}}|x-y|^{\mu}
(\varepsilon^{2}+|y|^{2})^{\frac{(N-2)q}{2}}}dxdy\\
&\leq\varepsilon^{(N-2)q}[N(N-2)]^{\frac{(N-2)q}{2}}\int_{\Omega\setminus B_{\delta}}\int_{\Omega\setminus B_{\delta}}\frac{1}{|x|^{(N-2)q}|x-y|^{\mu}
|y|^{(N-2)q}}dxdy\\
&=O(\varepsilon^{(N-2)q}).\\
\endaligned
\end{equation}
It follows from \eqref{D3}-\eqref{D6} that
\begin{equation}\label{D7}
\aligned
\int_{\Omega}\int_{\Omega}\frac{|u_{\varepsilon}(x)|^{q}
|u_{\varepsilon}(y)|^{q}}
{|x-y|^{\mu}}dxdy
&\geq O(\varepsilon^{2N-\mu-(N-2)q})-O(\varepsilon^{\frac{2N-\mu}{2}})-O(\varepsilon^{(N-2)q})\\
&=O(\varepsilon^{2N-\mu-(N-2)q})-O(\varepsilon^{\min\{\frac{2N-\mu}{2},(N-2)q\}}).
\endaligned
\end{equation}
By \eqref{C9}, \eqref{C10} and \eqref{D7}, we have
$$\aligned
J_{\lambda}(tu_{\varepsilon})&=\frac{t^{2}}{2}\int_{\Omega}|\nabla u_{\varepsilon}|^{2}dx-\frac{t^{2\cdot2_{\mu}^{\ast}}}{2\cdot2_{\mu}^{\ast}}
\int_{\Omega}\int_{\Omega}\frac{|u_{\varepsilon}(x)|^{2_{\mu}^{\ast}}|u_{\varepsilon}(y)|^{2_{\mu}^{\ast}}}
{|x-y|^{\mu}}dxdy-\frac{\lambda t^{2q}}{2q}\int_{\Omega}\int_{\Omega}\frac{|u_{\varepsilon}(x)|^{q}
|u_{\varepsilon}(y)|^{q}}
{|x-y|^{\mu}}dxdy\\
&\leq\frac{t^{2}}{2}\big(C(N,\mu)^{\frac{N-2}{2N-\mu}\cdot\frac{N}{2}}S_{H,L}^{\frac{N}{2}}+O(\varepsilon^{N-2})\big)
-\frac{t^{2\cdot2_{\mu}^{\ast}}}{2\cdot2_{\mu}^{\ast}}\big(C(N,\mu)^{\frac{N}{2}}S_{H,L}^{\frac{2N-\mu}{2}}-O(\varepsilon^{N-\frac{\mu}{2}})\big)\\
&\hspace{7mm}-\frac{t^{2q}}{2q}\big(O(\varepsilon^{2N-\mu-(N-2)q})-O(\varepsilon^{\min\{\frac{2N-\mu}{2},(N-2)q\}})\big)\\
&:=g(t).
\endaligned
$$
It is clear that $g(t)\rightarrow -\infty$ as $t\rightarrow+\infty$. It follows that there exists $t_{\varepsilon}>0$ such that $\sup_{t>0}g(t)$ is attained at $t_{\varepsilon}$. Differentiating $g(t)$ and equaling to zero, we obtain that
$$\aligned
t_{\varepsilon}\big(C(N,\mu)^{\frac{N-2}{2N-\mu}\cdot\frac{N}{2}}S_{H,L}^{\frac{N}{2}}+O(\varepsilon^{N-2})\big)
&-t_{\varepsilon}^{2\cdot2_{\mu}^{\ast}-1}\big(C(N,\mu)^{\frac{N}{2}}S_{H,L}^{\frac{2N-\mu}{2}}
-O(\varepsilon^{N-\frac{\mu}{2}})\big)\\
&-t^{2q-1}\big(O(\varepsilon^{2N-\mu-(N-2)q})-O(\varepsilon^{\min\{\frac{2N-\mu}{2},(N-2)q\}})\big)=0,
\endaligned$$
since  $N>\frac{2(q+1)-\mu}{q-1}$ and $N\geq3$ we know
\begin{equation}\label{EXP1}
{2N-\mu-(N-2)q}<\min\{\frac{2N-\mu}{2},(N-2)q\},
\end{equation}
which means $$\big(O(\varepsilon^{2N-\mu-(N-2)q})-O(\varepsilon^{\min\{\frac{2N-\mu}{2},(N-2)q\}})\big)\geq 0$$ if $\vr$ is small enough.
And so
$$
t_{\varepsilon}<\Big(\frac{C(N,\mu)^{\frac{N-2}{2N-\mu}\cdot\frac{N}{2}}S_{H,L}^{\frac{N}{2}}+O(\varepsilon^{N-2})}
{C(N,\mu)^{\frac{N}{2}}S_{H,L}^{\frac{2N-\mu}{2}}-O(\varepsilon^{N-\frac{\mu}{2}})}\Big)^{\frac{1}{2\cdot2_{\mu}^{\ast}-2}}:=S_{H,L}(\varepsilon)
$$
and there exists $t_{0}>0$ such that for $\varepsilon>0$ small enough
$$
t_{\varepsilon}>t_{0}.
$$
Notice that the function
$$
t\mapsto \frac{t^{2}}{2}(C(N,\mu)^{\frac{N-2}{2N-\mu}\cdot\frac{N}{2}}S_{H,L}^{\frac{N}{2}}+O(\varepsilon^{N-2}))
-\frac{t^{2\cdot2_{\mu}^{\ast}}}{2\cdot2_{\mu}^{\ast}}(C(N,\mu)^{\frac{N}{2}}S_{H,L}^{\frac{2N-\mu}{2}}
-O(\varepsilon^{N-\frac{\mu}{2}}))
$$
is increasing on $[0,S_{H,L}(\varepsilon)]$, thanks to $t_{0}<t_{\varepsilon}<S_{H,L}(\varepsilon)$ and \eqref{D7}, we have
$$\aligned
\max_{t\geq0}&J_{\lambda}(tu_{\varepsilon})\\
&\leq\frac{N+2-\mu}{4N-2\mu}\Big(\frac{C(N,\mu)^{\frac{N-2}{2N-\mu}\cdot\frac{N}{2}}S_{H,L}^{\frac{N}{2}}+O(\varepsilon^{N-2})}
{\Big(C(N,\mu)^{\frac{N}{2}}S_{H,L}^{\frac{2N-\mu}{2}}-O(\varepsilon^{N-\frac{\mu}{2}})\Big)^{\frac{N-2}{2N-\mu}}}\Big)^{\frac{2N-\mu}{N+2-\mu}}
-O(\varepsilon^{2N-\mu-(N-2)q})+O(\varepsilon^{\min\{\frac{2N-\mu}{2},(N-2)q\}})\\
&\leq\frac{N+2-\mu}{4N-2\mu}S_{H,L}^{\frac{2N-\mu}{N+2-\mu}}
+O(\varepsilon^{\min\{\frac{2N-\mu}{2},N-2\}})
-O(\varepsilon^{2N-\mu-(N-2)q})+O(\varepsilon^{\min\{\frac{2N-\mu}{2},(N-2)q\}}).\\
%&<\frac{N+2-\mu}{4N-2\mu}S_{H,L}^{\frac{2N-\mu}{N+2-\mu}},
\endaligned
$$
The assumptions $N>\frac{2(q+1)-\mu}{q-1}$ and $1<q<2_{\mu}^{\ast}$ together with \eqref{EXP1} imply that
$$\aligned
\max_{t\geq0}&J_{\lambda}(tu_{\varepsilon})
<\frac{N+2-\mu}{4N-2\mu}S_{H,L}^{\frac{2N-\mu}{N+2-\mu}},
\endaligned
$$
if $\vr $ is small enough.

\textbf{Case 2. } $N\leq\frac{2(q+1)-\mu}{q-1}$.

For any fixed $\vr$ in \eqref{EFV},  assume that  $\max_{t\geq0}J_{\lambda}(tu_{\varepsilon})$ is attained at some $t_{\lambda}>0$, repeat the arguments in the proof of Lemma \ref{EMP1}, we know $t_{\lambda}\rightarrow0$ as $\lambda\rightarrow+\infty$ and $\max_{t\geq0}J_{\lambda}(tu_{\varepsilon})\rightarrow0$,
as $\lambda\rightarrow+\infty$, which leads to the conclusion for the case $N\leq\frac{2(q+1)-\mu}{q-1}$.
\end{proof}

\noindent
{\bf Proof of Theorem \ref{EXS2}.}
By Lemma \ref{MP2}, Lemma \ref{EMP2} and the Mountain Pass Theorem without $(PS)$ condition (cf. \cite{Wi}), there
exists a $(PS)_c$ sequence $\{u_{n}\}$ of $J_{\lambda}$ with
 $c<\frac{N+2-\mu}{4N-2\mu}S_{H,L}^{\frac{2N-\mu}{N+2-\mu}}$ if one of the following conditions is holds:\\
(1). $N>\frac{2(q+1)-\mu}{q-1}$ and $\lambda>0$,\\
(2).  $N\leq\frac{2(q+1)-\mu}{q-1}$ and $\lambda$ is sufficiently large.\\  Applying Lemma \ref{ConPro2}, we know  $\{u_{n}\}$ contains a convergent subsequence. And so, we have $J_{\lambda}$ has a critical value $c\in(0, \frac{N+2-\mu}{4N-2\mu}S_{H,L}^{\frac{2N-\mu}{N+2-\mu}})$ and problem \eqref{CCE2} has a nontrivial solution. $\hfill{} \Box$
\\
\\
\noindent
{\bf Proof of Theorem \ref{EXS3}.}
The proof of Theorem \ref{EXS3} is similar to that of Theorem \ref{EXS2} and the main difference is that the $(PS)_c$ condition holds below the critical level $\frac{1}{N}S^{\frac{N}{2}}$. From Lemma 1.46 of \cite{Wi}, we know that
\begin{equation}\label{D8}
\int_{\Omega}|\nabla u_{\varepsilon}|^{2}dx=S^{\frac{N}{2}}+O(\varepsilon^{N-2})
\end{equation}
and
\begin{equation}\label{D9}
\int_{\Omega}|u_{\varepsilon}|^{2^{\ast}}dx=S^{\frac{N}{2}}+O(\varepsilon^{N}).
\end{equation}
For the Case 1 of Lemma \ref{EMP2}, we have
$$\aligned
\max_{t\geq0}J_{\lambda}(tu_{\varepsilon})
&\leq\frac{1}{N}\Big(\frac{S^{\frac{N}{2}}+O(\varepsilon^{N-2})}
{\Big(S^{\frac{N}{2}}+O(\varepsilon^{N})\Big)^{\frac{2}{2^{\ast}}}}\Big)^{\frac{N}{2}}
-O(\varepsilon^{2N-\mu-(N-2)q})+O(\varepsilon^{\min\{\frac{2N-\mu}{2},(N-2)q\}})\\
&<\frac{1}{N}S^{\frac{N}{2}}
+O(\varepsilon^{N-2})
-O(\varepsilon^{2N-\mu-(N-2)q})+O(\varepsilon^{\min\{\frac{2N-\mu}{2},(N-2)q\}})\\
&<\frac{1}{N}S^{\frac{N}{2}},\\
\endaligned
$$
thanks to $N>\frac{2(q+1)-\mu}{q-1}$.  The rest  of the proof is omitted here.$\hfill{} \Box$

\section{\large An Ambrosetti-Brezis-Cerami type concave and convex result}
In this section we discuss the problem \eqref{SCCE2} with both suplinear and sublinear local perturbation, i.e.
$$
\left\{\begin{array}{l}
\displaystyle-\Delta u
=\Big(\int_{\Omega}\frac{|u|^{2_{\mu}^{\ast}}}{|x-y|^{\mu}}dy\Big)|u|^{2_{\mu}^{\ast}-2}u+u^{p}+\lambda u^{q}\hspace{4.14mm}\mbox{in}\hspace{1.14mm} \Omega,\\
\displaystyle u\in H_{0}^{1}(\Omega),
\end{array}
\right.
$$
 where $0<q<1$ and $1<p<2^{\ast}-1$. Then we may define
$$
J_{\lambda}(u)=\frac{1}{2}\int_{\Omega}|\nabla u|^{2}dx-\frac{\lambda}{q+1}\int_{\Omega} u^{q+1}dx-\frac{1}{p+1}\int_{\Omega} u^{p+1}dx-\frac{1}{2\cdot2_{\mu}^{\ast}}\int_{\Omega}\int_{\Omega}\frac{|u(x)|^{2_{\mu}^{\ast}}|u(y)|^{2_{\mu}^{\ast}}}
{|x-y|^{\mu}}dxdy.
$$

The proof of the main results in Theorem \ref{EXS4}  will be separated into several Lemmas. We begin with a standard comparison method as well as some ideas given in [\cite{ABC}, Lemma 3.1]. Let $\Lambda$ be defined by
$$
\Lambda=\sup\{\lambda>0: \mbox{problem \eqref{SCCE2} has a solution} \}.
$$

\begin{lem}\label{EAC1}
$0<\Lambda<\infty$.
\end{lem}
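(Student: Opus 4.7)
The proof splits into two independent parts: exhibiting a positive solution for some range of $\lambda>0$ (giving $\Lambda>0$) and ruling out positive solutions for all sufficiently large $\lambda$ (giving $\Lambda<\infty$). Only the positivity and monotonicity of the right-hand side of \eqref{SCCE2} in $u\geq 0$ are needed; no Palais-Smale machinery enters.

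For $\Lambda>0$, I would use the classical monotone iteration (sub-supersolution) method. Let $\phi_1>0$ be the first Dirichlet eigenfunction of $-\Delta$ on $\Omega$ with eigenvalue $\lambda_1$, and let $v\in H_0^1(\Omega)\cap L^\infty(\Omega)$ solve $-\Delta v=1$ in $\Omega$, $v=0$ on $\partial\Omega$. Take $\underline{u}:=\varepsilon\phi_1$ and $\overline{u}:=\delta v$. Since $\Omega$ is bounded and $v$ is bounded, the convolution $|x|^{-\mu}\ast v^{2_{\mu}^{\ast}}$ is bounded on $\Omega$, so the pointwise supersolution inequality reduces to
\[
1\;\geq\; C_1\delta^{2\cdot 2_{\mu}^{\ast}-2}+C_2\delta^{p-1}+C_3\lambda\delta^{q-1},
\]
with $C_1,C_2,C_3>0$ independent of $\delta$ and $\lambda$. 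Since $p>1$ and $2\cdot 2_{\mu}^{\ast}>2$ while $0<q<1$, I would first fix $\delta=\delta_0$ small so that the first two terms sum to at most $1/2$, and then restrict $\lambda\in(0,\lambda_0]$ so that $C_3\lambda\delta_0^{q-1}\leq 1/2$. The subsolution inequality, after discarding the positive $u^p$ and nonlocal contributions on the right, reduces to $\varepsilon^{1-q}\lambda_1\phi_1^{1-q}\leq\lambda$ on $\Omega$, which holds for small $\varepsilon$ since $\phi_1\in L^\infty(\Omega)$. Hopf's lemma guarantees $\underline{u}\leq\overline{u}$ in $\Omega$ once $\varepsilon$ is sufficiently small. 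Because each term of the right-hand side—including $(|x|^{-\mu}\ast u^{2_{\mu}^{\ast}})u^{2_{\mu}^{\ast}-1}$—is nondecreasing in $u\geq 0$, the iteration in which $u_{n+1}\in H_0^1(\Omega)$ solves the linear problem $-\Delta u_{n+1}=F(u_n)$ (where $F$ denotes the right-hand side of \eqref{SCCE2}), starting at $u_0:=\underline{u}$, produces an increasing sequence squeezed between $\underline{u}$ and $\overline{u}$; monotone convergence and elliptic regularity then yield a positive solution, so $\lambda_0\leq\Lambda$.

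For $\Lambda<\infty$, I would test the equation against $\phi_1$. Integration by parts gives
\[
\lambda_1\int_\Omega u\phi_1\,dx\;=\;\int_\Omega\Big(\int_\Omega\frac{|u|^{2_{\mu}^{\ast}}}{|x-y|^{\mu}}\,dy\Big)u^{2_{\mu}^{\ast}-1}\phi_1\,dx+\int_\Omega(u^p+\lambda u^q)\phi_1\,dx.
\]
Dropping the nonnegative nonlocal integral gives $\int_\Omega(u^p+\lambda u^q-\lambda_1 u)\phi_1\,dx\leq 0$. On the other hand, the weighted AM-GM inequality with weights $\alpha=(1-q)/(p-q)$ and $\beta=(p-1)/(p-q)$ (which satisfy $\alpha+\beta=1$ and $\alpha p+\beta q=1$) yields, for every $t\geq 0$,
\[
t^p+\lambda t^q\;\geq\;\alpha t^p+\beta\lambda t^q\;\geq\;(t^p)^\alpha(\lambda t^q)^\beta\;=\;\lambda^{(p-1)/(p-q)}\,t.
\]
Hence $(u^p+\lambda u^q-\lambda_1 u)\phi_1\geq(\lambda^{(p-1)/(p-q)}-\lambda_1)u\phi_1$ pointwise, and as soon as $\lambda>\lambda_1^{(p-q)/(p-1)}$ this integrand is strictly positive on $\{u>0\}=\Omega$, contradicting the inequality above. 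Therefore $\Lambda\leq\lambda_1^{(p-q)/(p-1)}<\infty$.

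The only step at which the critical Choquard nonlinearity might have obstructed the argument is the supersolution construction; this is defused by the boundedness of $|x|^{-\mu}\ast v^{2_{\mu}^{\ast}}$ on the bounded domain, which turns the critical contribution into the small factor $\delta^{2\cdot 2_{\mu}^{\ast}-2}$ absorbed by choosing $\delta$ small. For the upper bound the nonlocal term is simply discarded, so the critical exponent never enters there.
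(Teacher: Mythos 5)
Your proposal is correct and follows essentially the same route as the paper: a sub-/supersolution pair $\varepsilon\phi_1\leq \delta v$ (with $-\Delta v=1$ and the nonlocal term controlled by its boundedness on the bounded domain) plus monotone iteration for $\Lambda>0$, and testing against $\phi_1$ with an elementary lower bound $t^p+\lambda t^q\geq c_1\lambda^{c_2}t$ for $\Lambda<\infty$. Your only additions are cosmetic refinements — the explicit exponent $c_2=(p-1)/(p-q)$ via weighted AM--GM and the explicit order of choosing $\delta$ then $\lambda$ — which match the paper's unstated constants.
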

\begin{proof}
To prove that $\Lambda>0$ we use the sub- and supersolution technique to construct a solution for any small $\lambda$ by using some ideas from \cite{ABC, GaPa}. Let $(\lambda_{1},e_{1})$ be the first eigenvalue and a corresponding positive eigenfunction of the Laplacian in $\Omega$. We can obtain a subsolution by taking a small multiple of $e_{1}$. We denote it as $\varepsilon e_{1}$. Let function $v$ denote the solution of
$$
\left\{\begin{array}{l}
\displaystyle-\Delta v=1\ \ \mbox{in}\ \ \Omega,\\
\\
\displaystyle v=0\ \ \ \ \ \ \mbox{on}\ \  \partial\Omega.
\end{array}
\right.
$$
Since $0<q<1<p<2\cdot2_{\mu}^{\ast}-1$, we can find $\lambda_{0}>0$ such that for $0<\lambda\leq\lambda_{0}$ there exists $M(\lambda)>0$ satisfying
$$
M\geq\lambda M^{q}|v|_{\infty}^{q}+M^{p}|v|_{\infty}^{p}+c_{0}M^{2\cdot2_{\mu}^{\ast}-1}|v|_{\infty}^{2\cdot2_{\mu}^{\ast}-1},
$$
where $c_{0}=\int_{\Omega+\Omega}\frac{1}{|y|^{\mu}}dy$, $\Omega+\Omega:=\{x+y\in\R^{N}:x,y\in\Omega\}$. As a consequence, the function $Mv$ verifies
$$
M=-\Delta(Mv)\geq\lambda (Mv)^{q}+(Mv)^{p}+M^{2\cdot2_{\mu}^{\ast}-1}\int_{\Omega}
\frac{|v(y)|^{2_{\mu}^{\ast}}}{|x-y|^{\mu}}dy|v(x)|^{2_{\mu}^{\ast}-2}v(x)
$$
and so $Mv$ is a supersolution of \eqref{SCCE2}. It follows that \eqref{SCCE2} has a positive solution $\varepsilon e_{1}\leq u\leq Mv$ for $0<\lambda\leq\lambda_{0}$ and so $\Lambda>0$.

Using $e_{1}$ as a test function in \eqref{SCCE2}, we have that
\begin{equation}\label{F1}
\aligned
\lambda_{1}\int_{\Omega} ue_{1} dx&=\int_{\Omega}\int_{\Omega}\frac{|u(x)|^{2_{\mu}^{\ast}}
|u(y)|^{2_{\mu}^{\ast}-2}u(y)e_{1}(y)}{|x-y|^{\mu}}dxdy+\lambda\int_{\Omega}u^{q}e_{1} dx
+\int_{\Omega}u^{p}e_{1} dx\\
&\geq \lambda\int_{\Omega}u^{q}e_{1} dx+\int_{\Omega}u^{p}e_{1} dx.
\endaligned
\end{equation}
Since there exist positive constants $c_{1},c_{2}$ such that $\lambda t^{q}+t^{p}>c_{1}\lambda^{c_{2}}t$, for any $t>0$, we obtain from \eqref{F1}
that $c_{1}\lambda^{c_{2}}<\lambda_{1}$ which implies $\Lambda<\infty$.
\end{proof}

\begin{lem}\label{EAC}
For all $0<\lambda<\Lambda$, the equation \eqref{SCCE2} has a minimal solution.
\end{lem}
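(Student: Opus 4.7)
The plan is to build the minimal solution by a monotone sub- and supersolution iteration, exploiting the fact that the full right-hand side of \eqref{SCCE2} is order-preserving on nonnegative functions.

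Fix $0<\lambda<\Lambda$. By the very definition of $\Lambda$ (and the nonemptiness established in Lemma~\ref{EAC1}), there exists $\lambda'\in(\lambda,\Lambda)$ such that \eqref{SCCE2} at the parameter $\lambda'$ admits a positive solution $u_{\lambda'}$. Since for $u>0$ the nonlinearity
$$
g_{\lambda}(u):=\Big(\int_{\Omega}\frac{|u|^{2_{\mu}^{\ast}}}{|x-y|^{\mu}}\,dy\Big)|u|^{2_{\mu}^{\ast}-2}u+u^{p}+\lambda u^{q}
$$
is strictly increasing in $\lambda$, the function $u_{\lambda'}$ is a strict supersolution for \eqref{SCCE2} at $\lambda$. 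On the other hand, using $0<q<1$ one checks that $\vr e_{1}$ is a subsolution for \eqref{SCCE2} at $\lambda$ as soon as $\vr>0$ is sufficiently small, since the sublinear contribution $\lambda\vr^{q}e_{1}^{q}$ then dominates the linear term $\vr\lambda_{1}e_{1}$ pointwise (because $\vr^{q-1}\to\infty$ while $e_{1}^{1-q}$ stays bounded). By the Hopf boundary lemma applied to $u_{\lambda'}$, a further shrinking of $\vr$ ensures $\vr e_{1}\leq u_{\lambda'}$ in $\Omega$.

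Consider the solution operator $T(v):=w$, where $w\in H_{0}^{1}(\Omega)$ solves $-\Delta w=g_{\lambda}(v)$ with homogeneous Dirichlet data. The crucial structural fact is that $g_{\lambda}$ is monotone nondecreasing on the cone of nonnegative functions: each of the three pieces $\lambda v^{q}$, $v^{p}$ and $(|x|^{-\mu}\ast v^{2_{\mu}^{\ast}})v^{2_{\mu}^{\ast}-1}$ is nondecreasing in $v\geq 0$, the nonlocal piece because convolution with the positive kernel $|x|^{-\mu}$ preserves order. Consequently the weak maximum principle implies that $T$ itself is order-preserving. Setting $u_{0}:=\vr e_{1}$ and $u_{n+1}:=T(u_{n})$, a direct induction gives
$$
\vr e_{1}=u_{0}\leq u_{1}\leq u_{2}\leq\cdots\leq u_{\lambda'}.
$$
The iterates are uniformly bounded in $L^{\infty}(\Omega)$ by $|u_{\lambda'}|_{\infty}<\infty$ (the latter following from a standard Moser / Brezis--Kato bootstrap applied to \eqref{SCCE2}), so dominated convergence together with standard elliptic estimates allow one to pass to the limit in the convolution and deduce that $u_{n}\to u_{\lambda}$ in $H_{0}^{1}(\Omega)$, with $u_{\lambda}$ a positive weak solution of \eqref{SCCE2} at $\lambda$ satisfying $\vr e_{1}\leq u_{\lambda}\leq u_{\lambda'}$.

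For minimality, let $w$ be any positive solution of \eqref{SCCE2} at $\lambda$; by the Hopf lemma $w\geq c_{w}e_{1}$ in $\Omega$ for some $c_{w}>0$. Shrinking $\vr$ so that $\vr\leq c_{w}$ (and keeping $\vr e_{1}$ a subsolution) and rerunning the iteration from $\vr e_{1}\leq w$, the identity $w=T(w)$ combined with the order-preservation of $T$ yields $u_{n}\leq T^{n}(w)=w$ for every $n$, hence $u_{\lambda}\leq w$ in the limit. Because the limits $u_{\lambda}^{(\vr)}$ of the iteration are themselves monotone nondecreasing in $\vr$, passing to the pointwise infimum as $\vr\to 0^{+}$ produces a positive solution that is dominated by every other positive solution, i.e.\ the desired minimal solution. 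The main technical obstacle is the monotonicity of $T$ through the nonlocal Hardy--Littlewood--Sobolev critical term together with the uniform $L^{\infty}$ control along the iteration, as both are essential for justifying the passage to the limit in the critical convolution piece.
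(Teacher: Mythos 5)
Your construction of a solution at parameter $\lambda$ by monotone iteration between the subsolution $\varepsilon e_{1}$ and the supersolution $u_{\lambda'}$ (for some solvable $\lambda'>\lambda$) is sound, and the order-preservation of the iteration map through the nonlocal term, together with the uniform $L^{\infty}$ bound needed to pass to the limit in the convolution, is correctly identified and justified. The difficulty, however, is concentrated in the minimality step, and there your argument has a genuine gap. Because your subsolution depends on the competitor $w$ (you must shrink $\varepsilon\leq c_{w}$, and $c_{w}$ can be arbitrarily small as $w$ ranges over all positive solutions), you are forced to take the pointwise infimum of the family $u_{\lambda}^{(\varepsilon)}$ as $\varepsilon\to0^{+}$. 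You never show that this decreasing limit is a nontrivial solution: since $g_{\lambda}(0)=0$, the function $u\equiv0$ solves the equation, and a priori the monotone family $u_{\lambda}^{(\varepsilon)}$ could collapse to $0$ as $\varepsilon\to0^{+}$. A positive lower bound uniform in $\varepsilon$ is required, and none is provided.

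This is exactly the point the paper's proof organizes differently. There one takes $v_{\lambda}$, the unique positive solution of $-\Delta v=\lambda v^{q}$ with Dirichlet data, and invokes the comparison lemma of Ambrosetti--Brezis--Cerami (Lemma 3.3 of \cite{ABC}) to conclude that \emph{every} positive solution $u$ of \eqref{SCCE2} satisfies $u\geq v_{\lambda}$, simply because $-\Delta u\geq\lambda u^{q}$. Running the monotone iteration once, starting from $u_{0}=v_{\lambda}$, then produces a solution $u_{\lambda}$ that automatically lies below every solution (each solution is a supersolution dominating the starting point), so no second limit in $\varepsilon$ is needed and minimality is immediate. Your argument can be repaired by importing the same comparison: each $u_{\lambda}^{(\varepsilon)}$ is a positive solution, hence $u_{\lambda}^{(\varepsilon)}\geq v_{\lambda}>0$ uniformly in $\varepsilon$, which gives the nontriviality of the infimum and, combined with standard elliptic estimates, that the infimum is again a solution. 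As written, though, the final passage to the limit in $\varepsilon$ is not justified, and that passage is precisely where minimality is supposed to be established.
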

\begin{proof}
Let $v_{\lambda}$ be the unique positive solution of
$$
\left\{\begin{array}{l}
\displaystyle-\Delta v=\lambda v^{q}\hspace{5.14mm} \ \mbox{in}\ \ \Omega,\\
\\
\displaystyle v=0 \hspace{12.14mm}\ \ \ \mbox{on}\ \ \partial\Omega.
\end{array}
\right.
$$
We already know that there exists a solution $u>0$ of \eqref{SCCE2} for every $0<\lambda<\Lambda$. Since $-\Delta u\geq\lambda u^{q}$ we can use Lemma 3.3 of \cite{ABC} with $w=u$ to deduce that any solution $u$ of \eqref{SCCE2} must satisfy $u\geq v_{\lambda}$. Clearly, $v_{\lambda}$ is a subsolution of \eqref{SCCE2}. The monotone iteration
$$
-\Delta u_{n+1}=\lambda u_{n}^{q}+u_{n}^{p}+\Big(\int_{\Omega}\frac{
|u_{n}(y)|^{2_{\mu}^{\ast}}}{|x-y|^{\mu}}dy\Big)|u_{n}(x)|^{2_{\mu}^{\ast}-2}u_{n}(x),\ \ \  u_{0}=v_{\lambda},
$$
satisfies $u_{n}\nearrow u_{\lambda}$, with $u_{\lambda}$ solution of \eqref{SCCE2}. It is easy to check that $u_{\lambda}$ is a minimal solution of \eqref{SCCE2}. Indeed, if $u$ is any solution of \eqref{SCCE2}, then $u\geq v_{\lambda}$ and $u$ is a supersolution of \eqref{SCCE2}. Thus $u_{n}\leq u$, $\forall n$, by induction, and $u\geq u_{\lambda}$.

Moreover, this minimal solution is increasing with respect to $\lambda$. In fact, if $u_{\lambda'}$ is a minimal solution of \eqref{SCCE2} with $\lambda=\lambda'$, then we have
$$
-\Delta u_{\lambda'}\leq\lambda'' u_{\lambda'}^{q}+u_{\lambda'}^{p}+\Big(\int_{\Omega}\frac{
|u_{\lambda'}(y)|^{2_{\mu}^{\ast}}}{|x-y|^{\mu}}dy\Big)|u_{\lambda'}(x)|^{2_{\mu}^{\ast}-2}u_{\lambda'}(x)
$$
for $0<\lambda'<\lambda''<\Lambda$, that is $u_{\lambda'}$  is a subsolution of \eqref{SCCE2} with $\lambda=\lambda''$. So $u_{\lambda'}\leq u_{\lambda''}$ and $u_{\lambda'}\not\equiv u_{\lambda''}$ for $0<\lambda'<\lambda''<\Lambda$.
\end{proof}

The solutions for equation \eqref{SCCE2} are classical in fact. First let us recall an important inequality for  nonlocal nonlinearities by Moroz and Van Schaftingen \cite{MS2} which complement  the results by Brezis and Kato in \cite{BK}. .
\begin{lem}\label{EAC3} (See \cite{MS2}.)
Let $N\geq 2$, $\mu\in (0,N)$ and $\theta\in (0,N)$. If $H, K\in L^{\frac{2N}{N-\mu+2}}(\mathbb{R}^N)+L^{\frac{2N}{N-\mu}}(\mathbb{R}^N) $, $(1-\frac{\mu}{N})<\theta<(1+\frac{\mu}{N})$, then for any $\vr>0$, there exists $C_{\varepsilon,\theta}\in\mathbb{R}$ such that for every $u\in H^{1}(\mathbb{R}^N)$,
$$
\int_{\mathbb{R}^N}\Big(|x|^{-\mu}\ast (H|u|^{\theta})\Big)K|u|^{2-\theta}dx\leq \varepsilon^{2}\int_{\mathbb{R}^{N}}|\nabla u|^{2}dx+C_{\varepsilon,\theta}\int_{\mathbb{R}^N}|u|^{2}dx.
$$
\end{lem}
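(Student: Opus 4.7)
The plan is to prove the inequality by a Brezis--Kato style decomposition tailored to the nonlocal convolution, combining the Hardy--Littlewood--Sobolev inequality of Proposition \ref{HLS} with a splitting of $H$ and $K$ into a piece of small norm in the ``critical'' Lebesgue space $L^{2N/(N-\mu+2)}(\R^N)$ and a remainder in the ``subcritical'' space $L^{2N/(N-\mu)}(\R^N)$. Concretely, given $\vr > 0$, the first step is to write
$$H = H_1 + H_2, \qquad K = K_1 + K_2,$$
with $H_1, K_1 \in L^{2N/(N-\mu+2)}(\R^N)$ of norm as small as we wish (by density of, say, bounded compactly supported functions in $L^{2N/(N-\mu+2)}(\R^N)$) and $H_2, K_2 \in L^{2N/(N-\mu)}(\R^N)$. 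Expanding bilinearly, the target integral splits into four pieces of the form
$$I_{\alpha\beta} := \int_{\R^N}\!\int_{\R^N} \frac{H_\alpha(x)|u(x)|^{\theta}\, K_\beta(y)|u(y)|^{2-\theta}}{|x-y|^{\mu}}\,dx\,dy, \qquad \alpha,\beta \in \{1,2\}.$$

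The second step is to estimate each $I_{\alpha\beta}$ by applying Hardy--Littlewood--Sobolev with exponents $t,r$ satisfying $1/t + 1/r + \mu/N = 2$, followed by H\"older's inequality to peel off the norms of $H_\alpha, K_\beta$ and leave a power of $|u|$ in some $L^{q}(\R^N)$. For $I_{11}$ the natural choice is $\|H_1|u|^{\theta}\|_t \leq \|H_1\|_{2N/(N-\mu+2)} \|u\|_{2^*}^{\theta}$ and symmetrically for $K_1$, which combined with Sobolev embedding produces a bound of the shape $C\|H_1\|\cdot\|K_1\|\cdot\|\nabla u\|_2^2$; since $\|H_1\|\cdot\|K_1\|$ is under our control, this piece is absorbed into $\vr^2 \|\nabla u\|_2^2$. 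For $I_{22}$ one chooses the dual end: exponents $\theta b = 2$ and $(2-\theta) b' = 2$, giving a bound by $C\|H_2\|\cdot\|K_2\|\cdot\|u\|_2^2$, which is pure $L^2$. The mixed terms $I_{12}$, $I_{21}$ use an intermediate choice producing $\|u\|_q^2$ with $q = 2N/(N-1) \in (2,2^*)$.

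In the third step, the mixed-term bounds are converted to the desired form by Gagliardo--Nirenberg interpolation $\|u\|_q \leq \|u\|_2^{1-a}\|u\|_{2^*}^{a}$ combined with Sobolev embedding, followed by Young's inequality $AB \leq \delta A^{1/a} + C_\delta B^{1/(1-a)}$ to split the mixed contribution into $\delta \|\nabla u\|_2^2 + C_\delta \|u\|_2^2$. Summing the four estimates and choosing $\eta := \|H_1\|+\|K_1\|$ small together with $\delta$ small yields precisely $\vr^2\|\nabla u\|_2^2 + C_{\vr,\theta}\|u\|_2^2$.

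The key technical step, and the main obstacle, is Step~2: one must check that all H\"older and HLS exponents chosen simultaneously lie in $(1,\infty)$ and that the resulting interpolation exponent $a \in (0,1)$ is admissible. A short computation shows the two endpoint conditions $1/t, 1/r \in (0,1)$ reduce exactly to the double-sided inequality $1-\mu/N < \theta < 1+\mu/N$ in the hypothesis, which is why this range is sharp. The extreme cases $\theta = 0$ and $\theta = 2$, corresponding to $H$ or $K$ alone on one side of the convolution, are borderline and need a slightly different direct argument using only HLS and Sobolev, but they are not required here since the hypothesis keeps $\theta$ strictly inside the admissible range.
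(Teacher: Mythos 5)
First, a remark on context: the paper does not prove this lemma at all --- it is quoted verbatim from Moroz and Van Schaftingen \cite{MS2} --- so there is no internal argument to compare yours against, and your proposal has to stand on its own. Your overall strategy (split $H$ and $K$ into a piece of small norm in $L^{2N/(N-\mu+2)}$ plus a remainder in $L^{2N/(N-\mu)}$, expand bilinearly into four terms, estimate each by Proposition \ref{HLS} and H\"older, then absorb via interpolation and Young) is the standard Brezis--Kato route and is viable in principle. Your treatment of $I_{11}$ and $I_{22}$ is correct, and your observation that the admissibility of the HLS exponents for $I_{22}$ is exactly the two-sided condition $1-\mu/N<\theta<1+\mu/N$ is right.

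The gap is in the mixed terms. With the symmetric choice $q_1=q_2=q$ the scaling relation $\theta/q_1+(2-\theta)/q_2=(N-1)/N$ does force $q=2N/(N-1)$, but then the H\"older exponent attached to the factor $H_1|u|^{\theta}$ is $\frac{1}{t}=\frac{N-\mu+2}{2N}+\frac{\theta(N-1)}{2N}$, and $t>1$ requires $\theta<\frac{N+\mu-2}{N-1}$, which is \emph{strictly smaller} than $1+\frac{\mu}{N}$ whenever $\mu<N$ (cross-multiplying, $N(N+\mu-2)-(N-1)(N+\mu)=\mu-N<0$). For instance $N=3$, $\mu=\tfrac12$, $\theta=1$ lies well inside the hypothesis but gives $\frac{1}{t}=\frac{13}{12}>1$, so Proposition \ref{HLS} cannot be applied to that pairing. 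Hence your closing claim that the endpoint conditions ``reduce exactly to $1-\mu/N<\theta<1+\mu/N$'' is true only for $I_{22}$, not for $I_{12}$ and $I_{21}$, and as written Step~2 fails on a large part of the admissible range of $\theta$. The repair is to use \emph{asymmetric} exponents $q_1\neq q_2\in[2,2^{*}]$ on the two copies of $u$, subject to $\theta/q_1+(2-\theta)/q_2=(N-1)/N$ together with $\frac{N-\mu+2}{2N}+\frac{\theta}{q_1}<1$ and $\frac{N-\mu}{2N}+\frac{2-\theta}{q_2}<1$; a short computation shows this system is solvable precisely under the stated hypothesis on $\theta$, after which your interpolation-plus-Young step goes through unchanged because the total power of $\|u\|_{2^{*}}$ stays strictly below $2$. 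A secondary point: the statement allows $N=2$, where $2^{*}=\infty$ and the bound $\|u\|_{2^{*}}\lesssim\|\nabla u\|_{2}$ you use for $I_{11}$ is unavailable; there every occurrence of $L^{2^{*}}$ must be replaced by a Gagliardo--Nirenberg estimate at a finite exponent (the paper only ever applies the lemma with $N\geq3$, so this does not affect its use here).
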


We have the following regularity Lemma and $L^{\infty}$ estimates for the solutions.
\begin{lem}\label{EAC4}
Let $u$ be a solution to the problem
\begin{equation}\label{F2}
\left\{\begin{array}{l}
\displaystyle-\Delta u
=g(x,u)\hspace{4.14mm}\mbox{in}\hspace{1.14mm} \Omega,\\
\displaystyle u\in H_{0}^{1}(\Omega),
\end{array}
\right.
\end{equation}
and assume that $|g(x,u)|\leq C(1+|u|^{p})+\Big(\int_{\Omega}\frac{|u|^{2_{\mu}^{\ast}}}{|x-y|^{\mu}}dy\Big)|u|^{2_{\mu}^{\ast}-2}u$, $1<p<2^{\ast}-1$ and $C>0$, then $u\in L^{\infty}(\Omega)$ and $u\in C^{2}(\overline{\Omega})$.
\end{lem}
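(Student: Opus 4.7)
The plan is to prove $u \in L^\infty(\Omega)$ first by a Brezis--Kato/Moser iteration adapted to the nonlocal critical term, and then to upgrade to $u \in C^2(\overline{\Omega})$ by standard Calder\'on--Zygmund and Schauder theory applied to the linear equation $-\Delta u = g(x,u)$.

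\smallskip
\noindent\textbf{Step 1 (Boundedness).} Starting from $u \in H^1_0(\Omega) \hookrightarrow L^{2^*}(\Omega)$, I would run a Moser-type iteration: for $\beta \geq 1$ and $T > 0$, test the equation against $\varphi = u \min(|u|,T)^{2(\beta-1)}$. The gradient term produces (after the standard chain-rule manipulation) a multiple of $\|\nabla(u\min(|u|,T)^{\beta-1})\|_2^2$, which by Sobolev embedding controls an $L^{2^*}$ norm of $u\min(|u|,T)^{\beta-1}$. The local piece $C(1+|u|^p)$ is subcritical and is handled as in Brezis--Kato. For the critical Choquard contribution
\[
\int_{\Omega}\bigl(|x|^{-\mu}\ast|u|^{2_\mu^*}\bigr)|u|^{2_\mu^*-2}u\,\varphi\,dx,
\]
I would invoke Lemma \ref{EAC3} with $\theta=1$ and coefficients $H=K=|u|^{2_\mu^*-2}\in L^{N/2}(\Omega)$, splitting $H$ as a sum of an $L^{2N/(N-\mu+2)}+L^{2N/(N-\mu)}$ piece with small norm on $\{|u|>M\}$ plus a bounded piece. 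This lets me absorb the dangerous part into $\varepsilon\|\nabla(\cdot)\|_2^2$ and obtain an estimate that, iterated over a geometric sequence of exponents $\beta_k$, yields $u\in L^s(\Omega)$ for every $s\in[1,\infty)$, and finally $u\in L^\infty(\Omega)$.

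\smallskip
\noindent\textbf{Step 2 (Regularity of the right-hand side).} Once $u\in L^\infty(\Omega)$, the subcritical term $C(1+|u|^p)$ lies in $L^\infty(\Omega)$. Since $\mu<N$ and $\Omega$ is bounded, the Riesz kernel $|x|^{-\mu}$ is integrable on $\Omega-\Omega$, so
\[
\Bigl\|\int_{\Omega}\frac{|u(y)|^{2_\mu^*}}{|x-y|^\mu}\,dy\Bigr\|_{L^\infty(\Omega)}\le \|u\|_\infty^{2_\mu^*}\sup_{x\in\Omega}\int_{\Omega}\frac{dy}{|x-y|^\mu}<\infty,
\]
so the whole nonlocal factor is bounded. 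Consequently $g(x,u)\in L^\infty(\Omega)\subset L^q(\Omega)$ for every $q<\infty$.

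\smallskip
\noindent\textbf{Step 3 (From $L^\infty$ datum to $C^2$).} Calder\'on--Zygmund $W^{2,q}$ estimates for $-\Delta$ with zero Dirichlet data give $u\in W^{2,q}(\Omega)$ for all $q<\infty$, whence $u\in C^{1,\alpha}(\overline{\Omega})$ by Sobolev embedding for some $\alpha\in(0,1)$. Then $|u|^p$ is H\"older continuous, and since $|u|^{2_\mu^*}\in L^\infty$, a standard argument on Riesz potentials shows $x\mapsto \int_{\Omega}|u(y)|^{2_\mu^*}|x-y|^{-\mu}dy$ is H\"older continuous on $\overline{\Omega}$; multiplying by the H\"older function $|u|^{2_\mu^*-2}u$ keeps H\"older regularity. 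Hence $g(\cdot,u)\in C^{0,\alpha'}(\overline{\Omega})$, and Schauder theory gives $u\in C^{2,\alpha'}(\overline{\Omega})\subset C^2(\overline{\Omega})$.

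\smallskip
The main obstacle I expect is Step 1: the nonlocal critical term does not split cleanly into pointwise-bounded times $L^s$ factors, so the iteration must be organized so that Lemma \ref{EAC3} can be applied with the small parameter $\varepsilon^2$ absorbing the leading-order Sobolev constant. Once the iteration closes and the $L^\infty$ bound is in hand, Steps 2 and 3 are straightforward applications of well-known linear elliptic regularity.
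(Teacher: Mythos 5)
Your overall architecture (an integrability bootstrap against the nonlocal critical term, followed by linear elliptic regularity) is the same as the paper's, and your Steps 2--3 are essentially the content of the result the paper invokes at the end (Theorem 1.16 of \cite{AM}). The genuine gap is in Step 1, precisely at the point you flag as the main obstacle: the iteration you describe cannot be closed up to $L^{\infty}$. Two concrete problems. First, the choice $\theta=1$, $H=K=|u|^{2_{\mu}^{\ast}-2}$ does not put the Choquard term into the form required by Lemma \ref{EAC3}: the convolution carries the \emph{full} power $|u|^{2_{\mu}^{\ast}}$, so with $\varphi=u\min(|u|,T)^{2(\beta-1)}$ and $v=u\min(|u|,T)^{\beta-1}$ one must write $|u|^{2_{\mu}^{\ast}}=H|v|^{\theta}$ and $|u|^{2_{\mu}^{\ast}-2}u\varphi=K|v|^{2-\theta}$, which forces $\theta$ to decrease as the iteration exponent grows (in the paper's notation $\theta=2/s$ with $H,K\sim|u|^{2_{\mu}^{\ast}-1}$). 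Second, Lemma \ref{EAC3} requires $1-\frac{\mu}{N}<\theta<1+\frac{\mu}{N}$, and since $\mu<N$ the lower bound is strictly positive; this caps the admissible exponents at $s<\frac{2N}{N-\mu}$, so the iteration cannot run over a geometric sequence $\beta_{k}\to\infty$. Your Step 2 then compounds the issue by using $\|u\|_{\infty}$ to bound the Riesz potential, which is circular given that $L^{\infty}$ is what Step 1 was supposed to deliver.

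The repair --- which is what the paper actually does --- is to note that you do not need $u\in L^{\infty}$ to tame the nonlocal factor. Running the truncated test-function argument only for $2\le s<\frac{2N}{N-\mu}$ and upgrading once by Sobolev embedding gives $u\in L^{s}(\Omega)$ for all $s<\frac{2N^{2}}{(N-\mu)(N-2)}$, hence $|u|^{2_{\mu}^{\ast}}\in L^{t}(\Omega)$ for some $t>\frac{N}{N-\mu}$, and then H\"{o}lder's inequality alone (using $\mu t'<N$) shows that $x\mapsto\int_{\Omega}|u(y)|^{2_{\mu}^{\ast}}|x-y|^{-\mu}\,dy$ is bounded on $\Omega$. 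At that point the equation is purely local with $|g(x,u)|\le C(1+|u|^{2^{\ast}-1})$, and the classical Brezis--Kato iteration together with your Step 3 ($W^{2,q}$ estimates, embedding into $C^{1,\alpha}$, H\"{o}lder continuity of the Riesz potential of a bounded density, and Schauder theory) finishes the proof. With Step 1 reorganized in this way your argument becomes correct and coincides with the paper's.
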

\begin{proof}
Let us define the truncation: $\Omega\rightarrow \mathbb{R}$, for $\tau>0$ large,
$$
u_{\tau}(x)=\left\{\begin{array}{l}
\displaystyle -\tau \hspace{10.14mm} \mbox{if}\hspace{2.14mm} u\leq-\tau,\\
\displaystyle u(x) \hspace{8.14mm} \mbox{if} \hspace{2.14mm}-\tau<u<\tau,\\
\displaystyle \tau \hspace{13.14mm} \mbox{if} \hspace{2.14mm}u\geq\tau.\\
\end{array}
\right.
$$
Since $|u_{\tau}|^{s-2}u_{\tau}\in H_{0}^{1}(\Omega)$ for $ s\geq2$, we take $|u_{\tau}|^{s-2}u_{\tau}$ as a test function in \eqref{F2}, we obtain
$$\aligned
\frac{4(s-1)}{s^{2}}\int_{\Omega}&|\nabla(u_{\tau})^{\frac{s}{2}}|^{2}dx
\\
&= (s-1)\int_{\Omega}|u_{\tau}|^{s-2}|\nabla u_{\tau}|^{2}dx\\
&\leq\int_{\Omega}\int_{\Omega}\frac{|u(y)|^{2_{\mu}^{\ast}}}
{|x-y|^{\mu}}dy|u(x)|^{2_{\mu}^{\ast}-1}|u_{\tau}(x)|^{s-1}dx +C\int_{\Omega}|u_{\tau}|^{s-2}u_{\tau}u^{p}dx+C\int_{\Omega}|u_{\tau}|^{s-1}u_{\tau}dx\\
&\leq\int_{\Omega}\int_{\Omega}\frac{|u(y)|^{2_{\mu}^{\ast}}}
{|x-y|^{\mu}}dy|u(x)|^{2_{\mu}^{\ast}-1}|u_{\tau}(x)|^{s-1}dx +2C\int_{\Omega}(1+u^{2})|u_{\tau}|^{s-2}\frac{1+u^{p}}{1+u}dx.
\endaligned
$$
We denote $a(u):=\frac{1+u^{p}}{1+u}$ and have $0\leq a(u)\leq C_{1}(1+u^{p-1})\in L^{\frac{N}{2}}(\Omega)$. If $2\leq s<\frac{2N}{N-\mu}$, using Lemma \ref{EAC3} with $\theta=\frac{2}{s}$, there exists $C_{2}>0$ such that
$$\aligned
\int_{\Omega}\int_{\Omega}\frac{|u_{\tau}(y)|^{2_{\mu}^{\ast}}}{|x-y|^{\mu}}dy
|u_{\tau}(x)|^{2_{\mu}^{\ast}-1}|u_{\tau}(x)|^{s-1}dx
\leq\frac{2(s-1)}{s^{2}}\int_{\Omega}|\nabla(u_{\tau})^{\frac{s}{2}}|^{2}dx
+C_{2}\int_{\Omega}||u_{\tau}|^{\frac{s}{2}}|^{2}dx.
\endaligned
$$
Since $|u_{\tau}|\leq|u|$, we have
$$
\aligned
\frac{2(s-1)}{s^{2}}\int_{\Omega}|\nabla(u_{\tau})^{\frac{s}{2}}|^{2}dx
&\leq C_{2}\int_{\Omega}|u|^{s}dx+\int_{A_{\tau}}\int_{\Omega}\frac{|u(y)|^{2_{\mu}^{\ast}-1}|u(y)|^{s-1}}
{|x-y|^{\mu}}dy|u(x)|^{2_{\mu}^{\ast}}dx\\
&\hspace{1cm}+2C\int_{\Omega}a(u)(1+u^{2})|u_{\tau}|^{s-2}dx,
\endaligned
$$
where $A_{\tau}=\{x\in\Omega:|u|>\tau\}.$

Since $2\leq s<\frac{2N}{N-\mu}$, applying the Hardy-Littlewood-Sobolev inequality again,
$$
\int_{A_{\tau}}\int_{\Omega}\frac{|u(y)|^{2_{\mu}^{\ast}-1}|u(y)|^{s-1}}
{|x-y|^{\mu}}dy|u(x)|^{2_{\mu}^{\ast}}dx\leq C_{3}\Big(\int_{\Omega}||u|^{2_{\mu}^{\ast}-1}|u|^{s-1}|^{r}dx\Big)^{\frac{1}{r}}
\Big(\int_{A_{\tau}}||u|^{2_{\mu}^{\ast}}|^{l}dx\Big)^{\frac{1}{l}},
$$
with $\frac{1}{r}=1+\frac{N-\mu}{2N}-\frac{1}{s}$ and $\frac{1}{l}=\frac{N-\mu}{2N}+\frac{1}{s}$. By H\"{o}lder's inequality, if $u\in L^{s}(\Omega)$, then $|u|^{2_{\mu}^{\ast}}\in L^{l}(\Omega)$ and $|u|^{2_{\mu}^{\ast}-1}|u|^{s-1}\in L^{r}(\Omega)$, whence by Lebesgue's dominated convergence theorem
$$
\lim_{\tau\rightarrow\infty}\int_{A_{\tau}}\int_{\Omega}\frac{|u(y)|^{2_{\mu}^{\ast}-1}|u(y)|^{s-1}}
{|x-y|^{\mu}}dy|u(x)|^{2_{\mu}^{\ast}}dx=0.
$$
On the other hand,
$$\aligned
\int_{\Omega}a(u)u^{2}|u_{\tau}|^{s-2}dx&\leq \tau_{0}\int_{a<\tau_{0}}u^{2}|u_{\tau}|^{s-2}dx +\int_{a\geq\tau_{0}}a(u)u^{2}|u_{\tau}|^{s-2}dx\\
&\leq C_{4}\tau_{0}+(\int_{a\geq\tau_{0}}a(u)^{\frac{N}{2}}dx)^{\frac{2}{N}}(\int_{a\geq\tau_{0}}
(u|u_{\tau}|^{\frac{s-2}{2}})^{2^{\ast}}dx)^{\frac{2}{2^{\ast}}}
\endaligned
$$
and
$$
\int_{\Omega}a(u)|u_{\tau}|^{s-2}dx\leq C_{5}\tau_{0}+(\int_{a\geq\tau_{0}}a(u)^{\frac{N}{2}}dx)^{\frac{2}{N}}(\int_{a\geq\tau_{0}}
(|u_{\tau}|^{\frac{s-2}{2}})^{2^{\ast}}dx)^{\frac{2}{2^{\ast}}},
$$
where, since $|u_{\tau}|^{\frac{s}{2}}$, $C_{4}$ and $C_{5}$ can be taken independent of $\tau$. Hence, by $a(u)\in L^{\frac{N}{2}}$ it follows that
$$
(\int_{a\geq\tau_{0}}a(u)^{\frac{N}{2}}dx)^{\frac{2}{N}}\rightarrow0
$$
as $\tau_{0}\rightarrow\infty$. Therefore, choosing $\tau_{0}$ large enough such that $$
C(\int_{a\geq\tau_{0}}a(u)^{\frac{N}{2}}dx)^{\frac{2}{N}}<\frac{1}{2},$$by Sobolev embedding theory, we obtain that there exists a constant $K(\tau_{0})$, independent of $\tau$, for which it holds
$$
\Big(\int_{\Omega}|u_{\tau}|^{\frac{sN}{N-2}}dx\Big)^{1-\frac{2}{N}}\leq C_{2}\int_{\Omega}|u|^{s}dx+K(\tau_{0}),
$$
Letting $\tau\rightarrow\infty$ we conclude that $u\in L^{\frac{sN}{N-2}}(\Omega)$. By iterating over $s$ a finite number of times we cover the range $s\in[2,\frac{2N}{N-\mu})$. So we can get weak solution $u\in L^{s}(\Omega)$ of \eqref{SCCE2} for every $s\in[2,\frac{2N^{2}}{(N-\mu)(N-2)})$.
Thus, $|u|^{2_{\mu}^{\ast}}\in L^{s}(\Omega)$ for every $s\in[\frac{2(N-2)}{2N-\mu},\frac{2N^{2}}{(N-\mu)(2N-\mu)})$. Since $\frac{2(N-2)}{2N-\mu}<\frac{N}{N-\mu}<\frac{2N^{2}}{(N-\mu)(2N-\mu)}$, we have
$$\int_{\Omega}\frac{|u|^{2_{\mu}^{\ast}}}{|x-y|^{\mu}}dy\in L^{\infty}(\Omega)$$ and so
$$
|g(x,u)|\leq C_{6}(1+|u|^{2^{\ast}-1}).
$$
From Theorem 1.16 of \cite{AM}, we have the weak solution  $u\in L^{\infty}(\Omega)$ and $u\in C^{2}(\overline{\Omega})$.
\end{proof}

\begin{lem}\label{EAC6}
For all $0<\lambda<\Lambda$, there exists a positive solution for \eqref{SCCE2} which is a local minimum of the functional $J_{\lambda}$ in the $C^{1}$-topology.
\end{lem}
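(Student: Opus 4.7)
The plan is to obtain $u_\lambda$ as a global minimizer of a suitably truncated functional and then promote it to a $C^1$-local minimum of $J_\lambda$ via the strong maximum principle and Hopf's lemma. Fix $\mu\in(\lambda,\Lambda)$ and let $\bar u:=u_\mu$ be the minimal positive solution of \eqref{SCCE2} at parameter $\mu$ produced by Lemma~\ref{EAC}; by Lemma~\ref{EAC4} it belongs to $C^2(\overline\Omega)$, and since $\mu>\lambda$ it is a strict supersolution of \eqref{SCCE2} at parameter $\lambda$. Define the capped nonlinearities $G(x,s):=(\min\{s_+,\bar u(x)\})^{2_\mu^{\ast}}$ and $\tilde f(x,s):=\lambda(\min\{s_+,\bar u(x)\})^q+(\min\{s_+,\bar u(x)\})^p$ with primitive $\tilde F(x,s):=\int_0^s\tilde f(x,t)\,dt$, and consider
\begin{equation*}
\tilde J_\lambda(u):=\tfrac12\|u\|^2-\int_\Omega\tilde F(x,u)\,dx-\frac{1}{2\cdot 2_\mu^{\ast}}\int_\Omega\!\int_\Omega\frac{G(x,u(x))G(y,u(y))}{|x-y|^\mu}\,dx\,dy.
\end{equation*}
Because $\bar u\in L^\infty(\Omega)$, all of these nonlinearities are uniformly bounded; since $2_\mu^{\ast}>1$, the map $s\mapsto G(x,s)$ is $C^1$, so $\tilde J_\lambda\in C^1(H_0^1(\Omega),\R)$. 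Coercivity and weak lower semicontinuity are immediate, hence $\tilde J_\lambda$ attains its infimum at some $u_\lambda\in H_0^1(\Omega)$.

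Evaluating $\tilde J_\lambda(t\varphi_1)$ at small $t>0$ (with $\varphi_1>0$ the first Dirichlet eigenfunction) shows $\inf\tilde J_\lambda<0$, since the $-\tfrac{\lambda}{q+1}t^{q+1}\int\varphi_1^{q+1}$ piece dominates as $q+1<2$, so $u_\lambda\not\equiv 0$. Testing $\langle\tilde J_\lambda'(u_\lambda),\cdot\rangle=0$ against $(u_\lambda)_-$ annihilates $\tilde f$ and $\partial_sG$, giving $u_\lambda\ge 0$; testing against $(u_\lambda-\bar u)_+$ and subtracting the weak supersolution inequality for $\bar u$, together with the monotonicity of $s\mapsto s^q,s^p,s^{2_\mu^{\ast}}$ on $[0,\infty)$ and the bound $G(\cdot,u_\lambda)\le\bar u^{2_\mu^{\ast}}$, yields $(u_\lambda-\bar u)_+\equiv 0$. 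On the order interval $\{0\le u_\lambda\le\bar u\}$ the truncations are inactive, so $u_\lambda$ is a weak, and by Lemma~\ref{EAC4} classical, positive solution of \eqref{SCCE2}.

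To upgrade $u_\lambda$ to a $C^1$-local minimum I apply the strong maximum principle and Hopf's lemma to $w:=\bar u-u_\lambda\ge 0$. Using $u_\lambda\le\bar u$ and the monotonicity of $t\mapsto t^q,t^p$ and $t\mapsto t^{2_\mu^{\ast}}$ on $[0,\infty)$, the difference of the PDEs gives $-\Delta w\ge(\mu-\lambda)\bar u^q>0$ in $\Omega$, with $w\ge 0$ and $w=0$ on $\partial\Omega$. Hence $w>0$ in $\Omega$ and $\partial_\nu w<0$ on $\partial\Omega$; an analogous argument applied to $u_\lambda$ itself yields $u_\lambda>0$ in $\Omega$ and $\partial_\nu u_\lambda<0$ on $\partial\Omega$. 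These strict $C^1$-separations force any $v\in C^1(\overline\Omega)\cap H_0^1(\Omega)$ that is $C^1$-close to $u_\lambda$ to satisfy $0\le v\le\bar u$ pointwise, so on such a $C^1$-neighborhood $J_\lambda(v)=\tilde J_\lambda(v)\ge\tilde J_\lambda(u_\lambda)=J_\lambda(u_\lambda)$, which is the desired local minimum property.

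The main obstacle will be the nonlocal term: the truncation $G$ has to be chosen $C^1$ in $s$ and compatible with the variational structure so that $\tilde J_\lambda'$ admits a clean pointwise expression, and the sign of the nonlocal contribution when testing against $(u_\lambda-\bar u)_+$ must be extracted by splitting the double convolution integral over $\{u_\lambda\le\bar u\}$ and $\{u_\lambda>\bar u\}$ and invoking the monotonicity of $s\mapsto s^{2_\mu^{\ast}}$ on $[0,\infty)$. A secondary delicate point is the Hopf-type strict separation of $u_\lambda$ from $\bar u$, which genuinely requires the strict inequality $\mu>\lambda$ and not merely $\mu\ge\lambda$; this is why choosing $\mu$ strictly between $\lambda$ and $\Lambda$ at the outset is essential.
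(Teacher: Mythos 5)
Your strategy is essentially the paper's: trap the solution in an order interval between a sub- and a supersolution, minimize a truncated functional globally, and use the strong maximum principle plus Hopf's lemma to show that a $C^{1}$-neighborhood of the minimizer stays inside the order interval, where the truncated and original functionals coincide. The only structural difference is your choice of barriers: the paper works on $[u_{\lambda'},u_{\lambda''}]$ with $\lambda'<\lambda<\lambda''$, so the minimizer is automatically pinched between two positive functions and nontriviality is free, whereas you use $[0,\bar u]$ with $\bar u=u_{\mu}$, $\mu>\lambda$, and therefore need (and correctly supply) the extra steps $\inf\tilde J_{\lambda}<0$ via $t\varphi_{1}$ with $q+1<2$, and the Hopf separation of $u_{\lambda}$ from $0$. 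Your explicit product truncation $G(x,u(x))G(y,u(y))$ of the nonlocal term is actually more honest than the paper's wholesale truncation of $f_{\lambda}$ (whose ``primitive'' is not really a pointwise object), and your observation that $\partial_{s}G(y,\cdot)$ vanishes on $\{u_{\lambda}>\bar u\}$ makes the comparison test against $(u_{\lambda}-\bar u)_{+}$ particularly clean.

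One claim is wrong as stated: $s\mapsto G(x,s)=(\min\{s_{+},\bar u(x)\})^{2_{\mu}^{\ast}}$ is \emph{not} $C^{1}$ at $s=\bar u(x)$, since the derivative jumps from $2_{\mu}^{\ast}\bar u(x)^{2_{\mu}^{\ast}-1}>0$ to $0$ (it is $C^{1}$ only at $s=0$, where $2_{\mu}^{\ast}>1$ helps). Consequently $\tilde J_{\lambda}$ is only locally Lipschitz, and if the set $\{u_{\lambda}=\bar u\}$ has positive measure the global minimizer satisfies a variational inequality rather than the Euler--Lagrange equation you invoke. This is repairable --- e.g.\ smooth the corner of $G$, work with one-sided directional derivatives at the minimizer, or simply minimize $J_{\lambda}$ over the closed convex set $\{0\le u\le\bar u\}$ and use the sub/supersolution structure to show the constrained minimizer is interior --- but as written the step ``testing $\langle\tilde J_{\lambda}'(u_{\lambda}),\cdot\rangle=0$'' needs this justification. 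The paper's own truncation has the same informality for the nonlocal term, so this is a defect of presentation rather than of strategy.
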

\begin{proof} Let $0<\lambda'<\lambda<\lambda''<\Lambda$ and $u_{\lambda'}$ and $u_{\lambda''}$ be the corresponding minimal solutions to \eqref{SCCE2},
$\lambda=\lambda'$ and $\lambda''$ respectively. Denote $u:=u_{\lambda''}-u_{\lambda'}$. Then, since minimal solutions is increasing with respect to $\lambda$, we have
$$
\left\{\begin{array}{l}
\displaystyle-\Delta u\geq0\ \ \mbox{in}\ \ \Omega,\\
\\
\displaystyle u=0\ \ \ \ \ \ \ \mbox{on}\ \  \partial\Omega.
\end{array}
\right.
$$

In order to obtain the existence of positive solution, we may assume that $$f_{\lambda}(u)=\Big(\int_{\Omega}\frac{|u|^{2_{\mu}^{\ast}}}{|x-y|^{\mu}}dy\Big)|u|^{2_{\mu}^{\ast}-2}u+\lambda u^{q}+u^{p},$$ for $u\geq0$ and $f_{\lambda}(u)=0$, for $u<0$. We set
$$
f_{\lambda}^{\ast}(u)=\left\{\begin{array}{l}
\displaystyle f_{\lambda}(u_{\lambda'}) \hspace{9.64mm} \mbox{if}\hspace{2.14mm} u\leq u_{\lambda'},\\
\displaystyle f_{\lambda}(u) \hspace{12.64mm} \mbox{if} \hspace{2.14mm}u_{\lambda'}<u<u_{\lambda''},\\
\displaystyle f_{\lambda}(u_{\lambda''}) \hspace{9.14mm} \mbox{if} \hspace{2.14mm}u\geq u_{\lambda''},\\
\end{array}
\right.
$$
$$
F_{\lambda}^{\ast}(u)=\int_{0}^{u}f_{\lambda}^{\ast}(s)ds
$$
and
$$
J_{\lambda}^{\ast}(u)=\frac{1}{2}\int_{\Omega}|\nabla u|^{2}dx-\int_{\Omega}
F_{\lambda}^{\ast}(u)dx.
$$
Standard calculation shows that $J_{\lambda}^{\ast}$ achieves its global minimum at some $u_{0}\in H_{0}^{1}(\Omega)$ satisfying
$$
\left\{\begin{array}{l}
\displaystyle-\Delta u_{0}=f_{\lambda}^{\ast}(u_{0})\ \ \mbox{in}\ \ \Omega,\\
\\
\displaystyle u_{0}=0\ \ \ \ \ \ \ \ \ \ \ \ \ \mbox{on}\ \  \partial\Omega
\end{array}
\right.
$$
and
$$
J_{\lambda}^{\ast}(u_{0})\leq J_{\lambda}^{\ast}(u), \ \  \ \forall u\in H_{0}^{1}(\Omega).
$$
By the Maximum Principle, we get that $u_{\lambda'}<u_{0}<u_{\lambda''}$ in $\Omega$, as well as
$$
\frac{\partial}{\partial\nu}(u_{0}-u_{\lambda'})<0,\ \ \ \frac{\partial}{\partial\nu}(u_{0}-u_{\lambda''})>0,\ \ \ x\in\partial\Omega.
$$
where $\nu$ is the outer unit normal at $\partial\Omega$. If $\|u-u_{0}\|_{C_{0}^{1}(\Omega)}\leq \varepsilon $ for $\vr$ small enough,  then $u_{\lambda'}\leq u\leq u_{\lambda''}$ and so we have $J_{\lambda}^{\ast}(u)=J_{\lambda}(u)$. Then
$$
J_{\lambda}(u)=J_{\lambda}^{\ast}(u)\geq J_{\lambda}^{\ast}(u_{0})=J_{\lambda}(u_{0})
$$
for any $u\in C_{0}^{1}(\Omega)$ with $
\|u-u_{0}\|_{C_{0}^{1}(\Omega)}\leq \varepsilon$ and so $u_0$ is a local minimum for $J_{\lambda}$ in the sense of $C^1$ topology.
\end{proof}

\begin{lem}\label{EAC7}
Let $u_{0}\in H_{0}^{1}(\Omega)$ be a local minimum of the functional $J_{\lambda}$ in $ C_{0}^{1}(\Omega)$, by this we mean that there exists $r>0$ such that
$$
J_{\lambda}(u_{0})\leq J_{\lambda}(u_{0}+u),\ \ \forall u\in C_{0}^{1}(\Omega)\ \ \mbox{with}\ \ \|u\|_{C_{0}^{1}(\Omega)}\leq r.
$$
Then $u_{0}$ is a local minimum of $J_{\lambda}$ in $ H_{0}^{1}(\Omega)$, that is, there exists $\varepsilon_{0}>0$ such that
$$
J_{\lambda}(u_{0})\leq J_{\lambda}(u_{0}+u),\ \ \forall u\in H_{0}^{1}(\Omega)\ \ \mbox{with}\ \ \|u\|\leq \varepsilon_{0}.
$$
\end{lem}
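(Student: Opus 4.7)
The plan is to adapt the classical Br\'ezis-Nirenberg comparison of $H^1$- and $C^1$-local minimizers (their 1993 note) to our nonlocal critical setting. I argue by contradiction. If $u_0$ is not an $H_0^1$-local minimum, there exists $\{v_n\} \subset H_0^1(\Omega)$ with $\delta_n := \|v_n - u_0\| \to 0$ and $J_\lambda(v_n) < J_\lambda(u_0)$.

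For each $n$, I apply Ekeland's variational principle to $J_\lambda$ on the closed ball $\overline{B}_n := \{u \in H_0^1(\Omega) : \|u - u_0\| \leq \delta_n\}$ to produce $u_n \in \overline{B}_n$ satisfying $J_\lambda(u_n) \leq \inf_{\overline{B}_n} J_\lambda + \delta_n^2 < J_\lambda(u_0)$, together with a Lagrange-type condition: either $\|J_\lambda'(u_n)\|_{H^{-1}} \leq \delta_n$ (interior case), or there exists $\mu_n \leq 0$ with $J_\lambda'(u_n) = 2\mu_n(-\Delta)(u_n - u_0) + \eta_n$, where $\eta_n \to 0$ in $H^{-1}(\Omega)$. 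Rearranging, $u_n$ solves
\begin{equation*}
-(1 - 2\mu_n)\Delta u_n = \lambda u_n^q + u_n^p + \Big(\int_\Omega \frac{|u_n|^{2_{\mu}^{\ast}}}{|x - y|^\mu}\,dy\Big)|u_n|^{2_{\mu}^{\ast} - 2}u_n - 2\mu_n \Delta u_0 + \eta_n.
\end{equation*}
Because $u_n \to u_0$ strongly in $H_0^1(\Omega)$, a routine argument forces $\mu_n \to 0$.

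Next, I invoke the regularity bootstrap of Lemma \ref{EAC4}: Hardy-Littlewood-Sobolev (via Lemma \ref{EAC3}) drives an $L^s$-iteration whose constants depend only on $\|u_n\|$ and on the $L^{N/2}$-norm of the auxiliary function $a(u_n) = (1+u_n^p)/(1+u_n)$. Since $u_n \to u_0$ in $H_0^1$, these quantities stay uniformly bounded, so the iteration yields uniform $L^\infty$ bounds for $u_n$, and then $|x|^{-\mu} \ast |u_n|^{2_{\mu}^{\ast}} \in L^\infty(\Omega)$ uniformly in $n$. Standard elliptic Schauder estimates applied to the equation above (noting $1 - 2\mu_n \to 1$) give uniform $C^{1,\alpha}(\overline{\Omega})$ bounds on $u_n$. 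Combining Arzel\`a-Ascoli with the $H_0^1$-convergence identifies the limit and promotes $u_n \to u_0$ in $C^1(\overline{\Omega})$.

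For $n$ sufficiently large, $\|u_n - u_0\|_{C_0^1(\Omega)} \leq r$, so the $C^1$-local minimality of $u_0$ yields $J_\lambda(u_n) \geq J_\lambda(u_0)$, contradicting $J_\lambda(u_n) < J_\lambda(u_0)$. The main technical obstacle is the uniformity of the regularity step: one must verify that the Moser-type iteration in Lemma \ref{EAC4} produces $L^\infty$-bounds depending only on the $H_0^1$-norm of $u_n$ (hence uniform along the sequence), so that the nonlocal convolution becomes an essentially bounded coefficient and classical elliptic $C^{1,\alpha}$-theory applies to the perturbed equation.
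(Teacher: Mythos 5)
Your strategy is the same Br\'ezis--Nirenberg $H^{1}$-versus-$C^{1}$ comparison that the paper uses: argue by contradiction, minimize $J_{\lambda}$ on small $H_{0}^{1}$-balls centred at $u_{0}$, extract an Euler--Lagrange equation with a nonpositive multiplier, obtain regularity bounds uniform in the ball radius via Lemma \ref{EAC4}, and conclude $C^{1}$-convergence to $u_{0}$ by Ascoli--Arzel\`a, contradicting the $C^{1}$-minimality. The one place where you deviate is that you replace the exact minimizer on each ball by an Ekeland approximate minimizer, and this is where a genuine gap opens. Ekeland's principle controls the error $\eta_{n}$ only in the dual norm, i.e.\ $\eta_{n}\to 0$ in $H^{-1}(\Omega)$, and an $H^{-1}$ source term cannot be fed into the regularity machinery: in the Moser-type iteration of Lemma \ref{EAC4}, testing against $|u_{n}|^{s-2}u_{n}$ produces the pairing $\langle \eta_{n},|u_{n}|^{s-2}u_{n}\rangle$, which is not controlled by the quantities being iterated, and neither Calder\'on--Zygmund nor Schauder estimates accept a right-hand side that is merely small in $H^{-1}$. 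Hence the claimed uniform $C^{1,\alpha}(\overline{\Omega})$ bounds on $u_{n}$ do not follow from what you have written, and without them you never land inside the $C^{1}$-neighbourhood where the hypothesis applies.

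The repair is precisely what the paper does: on a sufficiently small closed ball the infimum of $J_{\lambda}$ \emph{is} attained, because for a minimizing sequence $w_{k}\rightharpoonup w$ the Br\'ezis--Lieb decomposition and Lemma \ref{BLN} give $J_{\lambda}(w_{k})=J_{\lambda}(w)+\frac{1}{2}\|w_{k}-w\|^{2}-\frac{1}{2\cdot 2_{\mu}^{\ast}}D(w_{k}-w)+o_k(1)$ with $D(v)$ denoting the critical double integral, and for small radius the quadratic term dominates the higher-order critical term, so $\liminf_{k}J_{\lambda}(w_{k})\geq J_{\lambda}(w)$. The exact minimizer $u_{0,\varepsilon}$ then solves $-\Delta u_{0,\varepsilon}=\frac{1}{1-\zeta_{\varepsilon}}f_{\lambda}(u_{0,\varepsilon})$ with $\zeta_{\varepsilon}\leq 0$ and no error term, so Lemma \ref{EAC4} applies with constants independent of $\varepsilon$ because $\frac{1}{1-\zeta_{\varepsilon}}\in(0,1]$, and the rest of your argument goes through verbatim. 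Incidentally, your assertion that ``a routine argument forces $\mu_{n}\to 0$'' is neither routine nor needed: the sign condition $\mu_{n}\leq 0$ alone keeps the elliptic coefficient in $(0,1]$ after normalization, which is all the uniform estimates require.
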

\begin{proof}
Arguing by contradiction,  we may suppose that for any $\varepsilon>0$ small we have
$$
\min_{u\in B_{\varepsilon}(u_{0})}J_{\lambda}(u)<J_{\lambda}(u_{0}),
$$
where $B_{\varepsilon}(u_{0})=\{u\in H_{0}^{1}(\Omega):\|u-u_{0}\|\leq \varepsilon\}$.

Following Brezis and Nirenberg \cite{BN1}, we sketch the proof here. Applying a standard argument of weak lower semi-continuity, we may take $u_{0,\varepsilon}$ such that $\min_{u\in B_{\varepsilon}(u_{0})}J_{\lambda}(u)=J_{\lambda}(u_{0,\varepsilon})$. We need to prove  that
$u_{0,\varepsilon}\rightarrow u_{0}$ in $C_{0}^{1}(\Omega)$ as $\varepsilon\searrow0$. Note that the Euler-Lagrange equation satisfied by $u_{0,\varepsilon}$ involves a Lagrange multiplier $\zeta_{\varepsilon}$ such that
\begin{equation}\label{F3}
\langle J_{\lambda}'(u_{0,\varepsilon}),\varphi\rangle=\zeta_{\varepsilon}\langle u_{0,\varepsilon},\varphi\rangle_{H_{0}^{1}(\Omega)}, \ \ \ \ \forall\varphi\in H_{0}^{1}(\Omega).
\end{equation}
From $u_{0,\varepsilon}$ is a minimum of $J_{\lambda}$ in $B_{\varepsilon}(u_{0})$, we have
\begin{equation}\label{F4}
\zeta_{\varepsilon}=\frac{\langle J_{\lambda}'(u_{0,\varepsilon}),u_{0,\varepsilon}\rangle}{\|u_{0,\varepsilon}\|^{2}}\leq0.
\end{equation}
By \eqref{F3} we easily get that $u_{0,\varepsilon}$ satisfies
$$
\left\{\begin{array}{l}
\displaystyle-\Delta u_{0,\varepsilon}=\frac{1}{1-\zeta_{\varepsilon}}f_{\lambda}(u_{0,\varepsilon})
:=f_{\lambda,\varepsilon}(u_{0,\varepsilon})\ \ \ \ \ \mbox{in}\ \ \Omega,\\
\\
\displaystyle u_{0,\varepsilon}=0\ \ \hspace{49.14mm} \mbox{on}\ \  \partial\Omega.
\end{array}
\right.
$$
Since $u_{0,\varepsilon}\in H_{0}^{1}(\Omega)$ and \eqref{F4},
by Lemma \ref{EAC4}, there exists a constant $C$ independent of $\varepsilon$ such that $\|u_{0,\varepsilon}\|_{C^{2}(\overline{\Omega})}<C$. By Ascoli-Arzel\'{a} Theorem there exists
a subsequence, still denoted by $u_{0,\varepsilon}$, such that $u_{0,\varepsilon}\rightarrow u_{0}$ uniformly in $C_{0}^{1}(\Omega)$ as $\varepsilon\searrow0$. This implies that
for $\varepsilon$ small enough,
$$
J_{\lambda}(u_{0,\varepsilon})<J_{\lambda}(u_{0})
$$
for any $u_{0,\varepsilon}$ with $\|u_{0,\varepsilon}-u_{0}\|_{C_{0}^{1}(\Omega)}< \varepsilon.$
This contradicts our hypothesis.
\end{proof}

From Lemmas \ref{EAC6} and \ref{EAC7}, we know there exists a local minimum $u_{0}$ in $H_{0}^{1}(\Omega)$. For $0<\lambda<\Lambda$, we consider the translated nonlinearity defined by
$$
g(u)=\left\{\begin{array}{ll}
\displaystyle  \mathbb{F}(u)\hspace{2.14mm} \mbox{if}\hspace{2.14mm} u\geq0,\\
\\
\displaystyle 0 \hspace{8.14mm} \mbox{if} \hspace{2.14mm}u<0,\\
\end{array}
\right.
$$
where
$$
\aligned
\mathbb{F}(u)&=\lambda (u_{0}+u)^{q}-\lambda u_{0}^{q}+(u_{0}+u)^{p}- u_{0}^{p}\\
&\hspace{1cm}+\Big(\int_{\Omega}\frac{|u_{0}+u|^{2_{\mu}^{\ast}}}{|x-y|^{\mu}}dy\Big)|u_{0}+u|^{2_{\mu}^{\ast}-2}(u_{0}+u)
-\displaystyle\Big(\int_{\Omega}\frac{|u_{0}|^{2_{\mu}^{\ast}}}{|x-y|^{\mu}}dy\Big)|u_{0}|^{2_{\mu}^{\ast}-2}u_{0}.
\endaligned
$$
We consider the translated problem
\begin{equation}\label{F5}
\left\{\begin{array}{l}
\displaystyle-\Delta u=g(u)\ \ \ \ \mbox{in}\ \ \Omega,\\
\\
\displaystyle u=0\ \ \ \ \ \ \ \ \ \ \ \ \ \mbox{on}\ \  \partial\Omega.
\end{array}
\right.
\end{equation}
The Hardy-Littlewood-Sobolev inequality implies that
$$
\bar{J}_{\lambda}(u)=\frac{1}{2}\int_{\Omega}|\nabla u|^{2}dx-\int_{\Omega}
G(u)dx,
$$
is well defined, where
$$
G(u)=\int_{0}^{u}g(s)ds.
$$
Therefore  if $\bar{u}\not\equiv0$ is a critical point of $\bar{J}_{\lambda}$ then it is a solution
of \eqref{F5}, by the Maximum Principle, $\bar{u}>0$ and  $u=u_{0}+\bar{u}$ will
be a second solution of \eqref{SCCE2}. In order to finish the proof the Theorem \ref{EXS4}, we are going to investigate the existence of nontrivial critical points for $\bar{J}_{\lambda}$.

\begin{lem}\label{EAC8}
$u=0$ is a local minimum of $\bar{J}_{\lambda}$ in $ H_{0}^{1}(\Omega)$.
\end{lem}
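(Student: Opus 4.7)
The plan is to reduce the claim to the local minimality of $u_0$ for $J_\lambda$ in $H_0^1(\Omega)$, which Lemmas \ref{EAC6} and \ref{EAC7} already furnish: there exists $\varepsilon_0>0$ such that $J_\lambda(u_0+v)\ge J_\lambda(u_0)$ whenever $v\in H_0^1(\Omega)$ with $\|v\|\le\varepsilon_0$.

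The crux is the identity
$$\bar J_\lambda(u)=\frac{1}{2}\int_{\Omega}|\nabla u^-|^2\,dx+J_\lambda(u_0+u^+)-J_\lambda(u_0),\quad u\in H_0^1(\Omega),$$
where $u^\pm=\max\{\pm u,0\}$. I would prove this in two steps. \emph{Step 1.} For $u\ge 0$ a.e.\ in $\Omega$, expand $J_\lambda(u_0+u)-J_\lambda(u_0)$: writing $\frac{1}{2}\|u_0+u\|^2-\frac{1}{2}\|u_0\|^2=\frac{1}{2}\|u\|^2+\langle u_0,u\rangle_{H_0^1(\Omega)}$ and using that $u_0$ is a critical point of $J_\lambda$, the cross term becomes
$$\langle u_0,u\rangle_{H_0^1(\Omega)}=\lambda\int_{\Omega} u_0^q u\,dx+\int_{\Omega} u_0^p u\,dx+\int_{\Omega}\int_{\Omega}\frac{|u_0(x)|^{2_{\mu}^{\ast}}|u_0(y)|^{2_{\mu}^{\ast}-2}u_0(y)u(y)}{|x-y|^\mu}\,dxdy.$$
Substituting and regrouping, the remaining nonlinear differences $\frac{\lambda}{q+1}[(u_0+u)^{q+1}-u_0^{q+1}]-\lambda u_0^q u$, $\frac{1}{p+1}[(u_0+u)^{p+1}-u_0^{p+1}]-u_0^p u$, together with the analogous nonlocal Taylor remainder, assemble exactly into $\int_{\Omega} G(u)\,dx$ by the definitions of $\mathbb{F}$ and $G$; hence $\bar J_\lambda(u)=J_\lambda(u_0+u)-J_\lambda(u_0)$ on the positive cone. \emph{Step 2.} Since $g(s)=0$ for $s<0$, the nonlinear part of $\bar J_\lambda$ depends on $u$ only through $u^+$, while the quadratic part splits as $\int_{\Omega}|\nabla u|^2\,dx=\int_{\Omega}|\nabla u^+|^2\,dx+\int_{\Omega}|\nabla u^-|^2\,dx$; combining these two facts with Step 1 applied to $u^+$ yields the displayed identity.

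Once the identity is in place, the proof finishes in one line: if $\|u\|\le\varepsilon_0$ then $\|u^+\|\le\|u\|\le\varepsilon_0$, so the local minimality of $u_0$ gives $J_\lambda(u_0+u^+)-J_\lambda(u_0)\ge 0$, and $\frac{1}{2}\int_{\Omega}|\nabla u^-|^2\,dx\ge 0$, hence $\bar J_\lambda(u)\ge 0=\bar J_\lambda(0)$, which is precisely the claim.

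The main obstacle I anticipate is the careful interpretation of the symbol $\int_{\Omega} G(u)\,dx$ for the nonlocal contribution: the nonlocal part of $g(u)$ is not a pointwise function of $u(x)$, so $G$ must be read as the functional Taylor remainder of the nonlocal energy $u\mapsto\frac{1}{2\cdot 2_{\mu}^{\ast}}\int_{\Omega}\int_{\Omega}\frac{|u_0+u|^{2\cdot 2_{\mu}^{\ast}}}{|x-y|^\mu}\,dxdy$ at $u_0$, rather than as a pointwise antiderivative. Once this identification matches the construction of $g$ from $\mathbb{F}$, the algebra in Step 1 is routine and no further difficulty arises.
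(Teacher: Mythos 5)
Your proposal is correct, and it rests on the same key identity as the paper, namely $\bar J_{\lambda}(u)=J_{\lambda}(u_{0}+u)-J_{\lambda}(u_{0})$ obtained by expanding the quadratic term, cancelling the cross term via the criticality of $u_{0}$, and reading the nonlocal contribution to $\int_{\Omega}G(u)\,dx$ as the Taylor remainder of the convolution energy at $u_{0}$ --- exactly the interpretation the paper uses. The differences are in how the identity is deployed. The paper verifies the expansion for $u\in C_{0}^{1}(\Omega)$ with $\|u\|_{C_{0}^{1}(\Omega)}<\varepsilon$, concludes that $0$ is a $C^{1}$-local minimum of $\bar J_{\lambda}$, and leaves the upgrade to an $H_{0}^{1}$-local minimum to the $H^{1}$-versus-$C^{1}$ principle of Lemma \ref{EAC7}; moreover its computation of $\int_{\Omega}G(u)\,dx$ tacitly treats $G(u)$ as the full antiderivative of $\mathbb{F}$ everywhere, even though $G$ vanishes on the set $\{u<0\}$ by the definition of $g$. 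Your decomposition $\bar J_{\lambda}(u)=\frac{1}{2}\int_{\Omega}|\nabla u^{-}|^{2}\,dx+J_{\lambda}(u_{0}+u^{+})-J_{\lambda}(u_{0})$ repairs this sign issue explicitly (using $G(u)=G(u^{+})$ and the orthogonal splitting of the Dirichlet energy), and since $\|u^{+}\|\le\|u\|$ you can invoke the already-established $H_{0}^{1}$-local minimality of $u_{0}$ from Lemmas \ref{EAC6} and \ref{EAC7} to conclude directly in the $H_{0}^{1}$ topology, without re-running a regularity/compactness argument for $\bar J_{\lambda}$ itself. What your route buys is a cleaner and strictly more careful proof of the stated lemma; what the paper's route buys is nothing extra here, so your version is, if anything, preferable.
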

\begin{proof}
We only need to show that $u=0$ is a local minimum of $\bar{J}_{\lambda}$ in $C^1$ topology. Let $u\in C_{0}^{1}(\Omega)$, by direct computation, we know
$$\aligned
\bar{J}_{\lambda}(u)&=\frac{1}{2}\|u\|^{2}-\int_{\Omega}
G(u)dx\\
&=\frac{1}{2}\|u\|^{2}-\frac{\lambda}{q+1} \int_{\Omega}|u_{0}+u|^{q+1}dx+\frac{\lambda}{q+1} \int_{\Omega}|u_{0}|^{q+1}dx+\lambda \int_{\Omega}u_{0}^{q}udx-\frac{1}{p+1} \int_{\Omega}|u_{0}+u|^{p+1}dx\\
&\hspace{0.5cm}+\frac{1}{p+1} \int_{\Omega}|u_{0}|^{p+1}dx+ \int_{\Omega}u_{0}^{p}udx-\frac{1}{2\cdot2_{\mu}^{\ast}}\int_{\Omega}\int_{\Omega}
\frac{|(u_{0}+u)(x)|^{2_{\mu}^{\ast}}|(u_{0}+u)(y)|^{2_{\mu}^{\ast}}}{|x-y|^{\mu}}dxdy\\
&\hspace{1cm}
+\frac{1}{2\cdot2_{\mu}^{\ast}}\int_{\Omega}\int_{\Omega}
\frac{|u_{0}(x)|^{2_{\mu}^{\ast}}|u_{0}(y)|^{2_{\mu}^{\ast}}}{|x-y|^{\mu}}dxdy
+\int_{\Omega}\Big(\int_{\Omega}
\frac{|u_{0}|^{2_{\mu}^{\ast}}}{|x-y|^{\mu}}dy\Big)|u_{0}|^{2_{\mu}^{\ast}-2}u_{0}udx.
\endaligned
$$

On the other hand,
$$\aligned
J_{\lambda}&(u_{0}+u)\\
&=\frac{1}{2}\|u_{0}+u\|^{2}-\frac{\lambda}{q+1} \int_{\Omega}|u_{0}+u|^{q+1}dx-\frac{1}{p+1} \int_{\Omega}|u_{0}+u|^{p+1}dx\\
&\hspace{7mm}-\frac{1}{2\cdot2_{\mu}^{\ast}}\int_{\Omega}\int_{\Omega}
\frac{|(u_{0}+u)(x)|^{2_{\mu}^{\ast}}|(u_{0}+u)(y)|^{2_{\mu}^{\ast}}}{|x-y|^{\mu}}dxdy\\
&=\frac{1}{2}\|u_{0}\|^{2}+\frac{1}{2}\|u\|^{2}+\int_{\Omega}\nabla u\nabla u_{0}dx -\frac{\lambda}{q+1} \int_{\Omega}|u_{0}+u|^{q+1}dx-\frac{1}{p+1} \int_{\Omega}|u_{0}+u|^{p+1}dx\\
&\hspace{7mm}-\frac{1}{2\cdot2_{\mu}^{\ast}}\int_{\Omega}\int_{\Omega}
\frac{|(u_{0}+u)(x)|^{2_{\mu}^{\ast}}|(u_{0}+u)(y)|^{2_{\mu}^{\ast}}}{|x-y|^{\mu}}dxdy\\
&=\frac{1}{2}\|u_{0}\|^{2}+\frac{1}{2}\|u\|^{2} -\frac{\lambda}{q+1} \int_{\Omega}|u_{0}+u|^{q+1}dx-\frac{1}{p+1} \int_{\Omega}|u_{0}+u|^{p+1}dx+\lambda \int_{\Omega}u_{0}^{q}udx+\int_{\Omega}u_{0}^{p}udx\\
&\hspace{7mm}-\frac{1}{2\cdot2_{\mu}^{\ast}}\int_{\Omega}\int_{\Omega}
\frac{|(u_{0}+u)(x)|^{2_{\mu}^{\ast}}|(u_{0}+u)(y)|^{2_{\mu}^{\ast}}}{|x-y|^{\mu}}dxdy
+\int_{\Omega}\Big(\int_{\Omega}
\frac{|u_{0}|^{2_{\mu}^{\ast}}}{|x-y|^{\mu}}dy\Big)|u_{0}|^{2_{\mu}^{\ast}-2}u_{0}udx.
\endaligned
$$
Since $u_{0}$ is a local minimum of $J_{\lambda}$, we have that
$$\aligned
\bar{J}_{\lambda}(u)&=J_{\lambda}(u_{0}+u)-\frac{1}{2}\|u_{0}\|^{2}+\frac{\lambda}{q+1} \int_{\Omega}|u_{0}|^{q+1}dx\\
&\hspace{7mm}+\frac{1}{p+1} \int_{\Omega}|u_{0}|^{p+1}dx
+\frac{1}{2\cdot2_{\mu}^{\ast}}\int_{\Omega}\int_{\Omega}
\frac{|u_{0}(x)|^{2_{\mu}^{\ast}}|u_{0}(y)|^{2_{\mu}^{\ast}}}{|x-y|^{\mu}}dxdy\\
&=J_{\lambda}(u_{0}+u)-J_{\lambda}(u_{0})\\
&\geq0\\
&=\bar{J}_{\lambda}(0)
\endaligned
$$
provided $\|u\|_{C_{0}^{1}(\Omega)}<\varepsilon$.
\end{proof}

\begin{lem}\label{EAC9}
If $u=0$ is the only critical point of $\bar{J}_{\lambda}$ in $H_{0}^{1}(\Omega)$ then $\bar{J}_{\lambda}$ satisfies a local Palais-Smale condition
below the critical level $\frac{N+2-\mu}{4N-2\mu}S_{H,L}^{\frac{2N-\mu}{N+2-\mu}}$.

\end{lem}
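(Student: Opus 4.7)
The plan is to adapt the argument of Lemma \ref{ConPro} to the translated functional $\bar{J}_{\lambda}$, the key observation being the identity
\begin{equation*}
\bar{J}_{\lambda}(u) = J_{\lambda}(u_{0}+u) - J_{\lambda}(u_{0}) \qquad \text{for every } u\geq 0,
\end{equation*}
which follows from $J_{\lambda}'(u_{0})=0$ together with the definition of $g$ as the difference of the nonlinear terms at $u_{0}+u$ and at $u_{0}$. In particular, whenever $u_{n}\geq 0$ is a $(PS)_{c}$ sequence for $\bar{J}_{\lambda}$, the translate $w_{n}:=u_{0}+u_{n}$ is a Palais--Smale sequence for the original functional $J_{\lambda}$ at level $c+J_{\lambda}(u_{0})$, which lets me recycle the compactness analysis already performed for the critical Choquard functional.

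Let $\{u_{n}\}$ be a $(PS)_{c}$ sequence for $\bar{J}_{\lambda}$ with $c<\frac{N+2-\mu}{4N-2\mu}S_{H,L}^{\frac{2N-\mu}{N+2-\mu}}$. Since $g\equiv 0$ on $\{s<0\}$, testing $\bar{J}_{\lambda}'(u_{n})$ against $u_{n}^{-}$ yields $\|u_{n}^{-}\|\to 0$, so I may assume $u_{n}\geq 0$. Setting $w_{n}:=u_{0}+u_{n}$, boundedness of $\{w_{n}\}$ in $H_{0}^{1}(\Omega)$ follows by evaluating
\begin{equation*}
J_{\lambda}(w_{n}) - \frac{1}{p+1}\langle J_{\lambda}'(w_{n}),w_{n}\rangle = \frac{p-1}{2(p+1)}\|w_{n}\|^{2} + \lambda\Big(\frac{1}{p+1}-\frac{1}{q+1}\Big)\int_{\Omega}w_{n}^{q+1}dx + \Big(\frac{1}{p+1}-\frac{1}{2\cdot 2_{\mu}^{\ast}}\Big)\int_{\Omega}\int_{\Omega}\frac{|w_{n}(x)|^{2_{\mu}^{\ast}}|w_{n}(y)|^{2_{\mu}^{\ast}}}{|x-y|^{\mu}}dxdy,
\end{equation*}
whose left hand side is $O(1+\|w_{n}\|)$: the coefficient of $\|w_{n}\|^{2}$ is strictly positive, the nonlocal critical coefficient is positive because $p+1<2\cdot 2_{\mu}^{\ast}$, and the only negative contribution $\|w_{n}\|^{q+1}$ is sublinear since $q+1<2$. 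Hence $\{u_{n}\}$ is bounded; up to a subsequence, $u_{n}\rightharpoonup \bar{u}$ in $H_{0}^{1}(\Omega)$, strongly in every $L^{s}(\Omega)$ with $s<2^{\ast}$, and a.e.\ in $\Omega$. The weak-continuity argument of Lemma \ref{WSo} for the Riesz convolution shows that $\bar{u}$ is a critical point of $\bar{J}_{\lambda}$, and the standing hypothesis forces $\bar{u}\equiv 0$.

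It remains to run the concentration analysis. Since $w_{n}\rightharpoonup u_{0}$, the Br\'ezis--Lieb lemma together with Lemma \ref{BLN} applied to the decomposition $w_{n}=u_{0}+u_{n}$ gives
\begin{equation*}
\|w_{n}\|^{2}=\|u_{0}\|^{2}+\|u_{n}\|^{2}+o(1),
\end{equation*}
\begin{equation*}
\int_{\Omega}\int_{\Omega}\frac{|w_{n}(x)|^{2_{\mu}^{\ast}}|w_{n}(y)|^{2_{\mu}^{\ast}}}{|x-y|^{\mu}}dxdy = \int_{\Omega}\int_{\Omega}\frac{|u_{0}(x)|^{2_{\mu}^{\ast}}|u_{0}(y)|^{2_{\mu}^{\ast}}}{|x-y|^{\mu}}dxdy + \int_{\Omega}\int_{\Omega}\frac{|u_{n}(x)|^{2_{\mu}^{\ast}}|u_{n}(y)|^{2_{\mu}^{\ast}}}{|x-y|^{\mu}}dxdy + o(1),
\end{equation*}
while the subcritical integrals $\int w_{n}^{q+1}$ and $\int w_{n}^{p+1}$ converge to $\int u_{0}^{q+1}$ and $\int u_{0}^{p+1}$ by the Rellich--Kondrachov theorem. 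Plugging these into $J_{\lambda}(w_{n})\to c+J_{\lambda}(u_{0})$ and $\langle J_{\lambda}'(w_{n}),w_{n}\rangle\to 0$, then canceling against the formulas for $J_{\lambda}(u_{0})$ and the identity $\langle J_{\lambda}'(u_{0}),u_{0}\rangle=0$, I obtain
\begin{equation*}
\frac{1}{2}\|u_{n}\|^{2} - \frac{1}{2\cdot 2_{\mu}^{\ast}}\int_{\Omega}\int_{\Omega}\frac{|u_{n}(x)|^{2_{\mu}^{\ast}}|u_{n}(y)|^{2_{\mu}^{\ast}}}{|x-y|^{\mu}}dxdy \to c, \qquad \|u_{n}\|^{2}-\int_{\Omega}\int_{\Omega}\frac{|u_{n}(x)|^{2_{\mu}^{\ast}}|u_{n}(y)|^{2_{\mu}^{\ast}}}{|x-y|^{\mu}}dxdy \to 0.
\end{equation*}
Writing $\|u_{n}\|^{2}\to b$, the second relation forces the nonlocal critical integral to tend to the same limit $b$, and the defining inequality for $S_{H,L}$ gives either $b=0$ or $b\geq S_{H,L}^{(2N-\mu)/(N-\mu+2)}$; in the latter alternative the first relation yields $c\geq \frac{N+2-\mu}{4N-2\mu}S_{H,L}^{(2N-\mu)/(N-\mu+2)}$, contradicting the hypothesis. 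Therefore $b=0$ and $u_{n}\to 0$ strongly in $H_{0}^{1}(\Omega)$.

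The main technical obstacle is establishing the translation identity $\bar{J}_{\lambda}(u)=J_{\lambda}(u_{0}+u)-J_{\lambda}(u_{0})$ through the nonlocal critical nonlinearity: one must recognize $\int_{\Omega}G(u)dx$ as the functional difference $\Phi(u_{0}+u)-\Phi(u_{0})-\langle \Phi'(u_{0}),u\rangle$, where $\Phi$ denotes the full nonlinear potential combining the subcritical local pieces and the critical nonlocal double integral, and then use $J_{\lambda}'(u_{0})=0$ to absorb the residual linear-in-$u$ contribution arising from $\frac{1}{2}\|u_{0}+u\|^{2}-\frac{1}{2}\|u_{0}\|^{2}$. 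Once this is in hand, the nonlocal Br\'ezis--Lieb splitting from Lemma \ref{BLN} reduces the proof to the threshold dichotomy for $S_{H,L}$, in complete analogy with Lemma \ref{ConPro}.
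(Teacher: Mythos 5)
Your proof is correct and follows essentially the same route as the paper's: translate the $(PS)_c$ sequence back to the original functional $J_{\lambda}$ via the identity $\bar{J}_{\lambda}(u)=J_{\lambda}(u_{0}+u)-J_{\lambda}(u_{0})$, identify the weak limit as $0$ using the uniqueness hypothesis, and run the Br\'ezis--Lieb splitting and $S_{H,L}$ threshold dichotomy of Lemma \ref{ConPro}. Your explicit handling of the negative part $u_{n}^{-}$ and of the boundedness estimate is in fact slightly more careful than the paper, which simply cites Lemmas \ref{WSo} and \ref{ConPro} and invokes the uniqueness hypothesis only at the final contradiction step.
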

\begin{proof}
If $\{w_{n}\}$ is a $(PS)_c$ sequence of $\bar{J}_{\lambda}$, then
\begin{equation}\label{F6}
\bar{J}_{\lambda}(w_{n})\rightarrow c<\frac{N+2-\mu}{4N-2\mu}S_{H,L}^{\frac{2N-\mu}{N+2-\mu}}, \ \ \ \bar{J}_{\lambda}'(w_{n})\rightarrow0.
\end{equation}
Since the fact that $w_{0}$ is a critical point implies $\bar{J}_{\lambda}(w_{n})=J_{\lambda}(u_{n})-J_{\lambda}(w_{0})$, where $u_{n}=w_{n}+w_{0}$, we
have that
\begin{equation}\label{F7}
J_{\lambda}(u_{n})\rightarrow c+J_{\lambda}(w_{0}), \ \ \ J_{\lambda}'(u_{n})\rightarrow0.
\end{equation}

Similar to Lemma \ref{WSo}, we have, if $\{u_{n}\}$ is a $(PS)_c$ sequence of $J_{\lambda}$, then $\{u_{n}\}$ is bounded, if $u_{\infty}\in H_{0}^{1}(\Omega)$ is the weak limit of $\{u_{n}\}$, then $u_{\infty}$ is a weak solution of problem \eqref{SCCE2}. Let $v_{n}:=u_{n}-u_{\infty}$, then we know $v_{n}\rightharpoonup0$ in $H_{0}^{1}(\Omega)$ and $v_{n}\rightarrow 0$ a.e. in $\Omega$.
From by the proof of Lemma \ref{ConPro}, we can assume there exists a nonnegative constant $b$ such that
$$
\int_{\Omega}|\nabla v_{n}|^{2}dx\rightarrow b
$$
as $n\rightarrow+\infty$ and we obtain
\begin{equation}\label{F8}
\lim_{n\rightarrow\infty}J_{\lambda}(u_{n})\geq \frac{N+2-\mu}{4N-2\mu}S_{H,L}^{\frac{2N-\mu}{N+2-\mu}}+J_{\lambda}(u_{\infty})
\end{equation}
or $b=0$. If \eqref{F8} holds, then by \eqref{F6} and \eqref{F7}, we have
$$
\frac{N+2-\mu}{4N-2\mu}S_{H,L}^{\frac{2N-\mu}{N+2-\mu}}+J_{\lambda}(u_{\infty})< \frac{N+2-\mu}{4N-2\mu}S_{H,L}^{\frac{2N-\mu}{N+2-\mu}}+J_{\lambda}(w_{0}).
$$
Note that $u=0$ is the only critical point of $\bar{J}_{\lambda}$ in $H_{0}^{1}(\Omega)$ then, $u_{\infty}=w_{0}$. Thus we have
$$
\frac{N+2-\mu}{4N-2\mu}S_{H,L}^{\frac{2N-\mu}{N+2-\mu}}< \frac{N+2-\mu}{4N-2\mu}S_{H,L}^{\frac{2N-\mu}{N+2-\mu}}.
$$
This a contradiction. Then $b=0$, this is
$$
\|u_{n}-u_{\infty}\|\rightarrow0
$$
as $n\rightarrow+\infty$. This ends the proof of Lemma \ref{EAC9}.
\end{proof}

\begin{lem}\label{EAC10} Let $0<q<1$, $1<p<2^{\ast}-1$, $\lambda>0$ and $u_{\varepsilon}$ as defined in \eqref{EFV}. Then, there exists $\varepsilon>0$ small enough such that
\begin{equation}\label{F11}
\sup_{t\geq0}\bar{J}_{\lambda}(tu_{\varepsilon})<\frac{N+2-\mu}{4N-2\mu}S_{H,L}^{\frac{2N-\mu}{N+2-\mu}}.
\end{equation}
\end{lem}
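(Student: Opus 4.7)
Since $tu_\varepsilon \geq 0$ for every $t \geq 0$, the direct computation carried out in the proof of Lemma~\ref{EAC8} yields the identity
\begin{equation*}
\bar J_\lambda(tu_\varepsilon) = J_\lambda(u_0 + tu_\varepsilon) - J_\lambda(u_0),
\end{equation*}
so the task reduces to showing $\sup_{t\geq 0}[J_\lambda(u_0+tu_\varepsilon) - J_\lambda(u_0)] < \tfrac{N+2-\mu}{4N-2\mu}S_{H,L}^{(2N-\mu)/(N+2-\mu)}$. The strategy is an Ambrosetti--Brezis--Cerami type estimate adapted to the nonlocal setting, exploiting the fact that $u_0$ is a critical point of $J_\lambda$ so that the linear-in-$t$ contribution to $J_\lambda(u_0+tu_\varepsilon)-J_\lambda(u_0)$ vanishes.

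\textbf{Key steps.} Applying the pointwise inequality $(a+b)^r \geq a^r + ra^{r-1}b + b^r$ (valid for $a,b\geq 0$ and $r\geq 2$) to $(u_0+tu_\varepsilon)^{p+1}$, and separately at each of the variables $x$ and $y$ to the two factors of $(u_0+tu_\varepsilon)^{2_\mu^*}$ inside the double Riesz integral, together with the convexity lower bound for the $u^{q+1}$ term, I would obtain
\begin{equation*}
\bar J_\lambda(tu_\varepsilon) \leq \frac{t^2}{2}\|u_\varepsilon\|^2 - \frac{t^{2\cdot 2_\mu^*}}{2\cdot 2_\mu^*} \iint \frac{u_\varepsilon(x)^{2_\mu^*} u_\varepsilon(y)^{2_\mu^*}}{|x-y|^\mu}dxdy - \frac{t^{p+1}}{p+1}\int u_\varepsilon^{p+1}dx.
\end{equation*}
Maximizing the first two terms in $t$ as in Lemma~\ref{EMP1} and substituting \eqref{C9}--\eqref{C10}, the supremum is bounded by $\tfrac{N+2-\mu}{4N-2\mu}S_{H,L}^{(2N-\mu)/(N+2-\mu)} + O(\varepsilon^{\min\{N-2,\,N-\mu/2\}})$, and a bootstrap shows that the optimizer $t_\varepsilon$ stays in a fixed compact subset of $(0,\infty)$, so that the third term contributes at most $-c\int u_\varepsilon^{p+1}dx$ for some $c>0$ independent of $\varepsilon$. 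The standard change of variables $x=\varepsilon y$ gives $\int u_\varepsilon^{p+1}$ of order strictly larger than $\varepsilon^{N-2}$ precisely when $(N-2)(p+1)>4$, which is automatic for $N\geq 4$ since $p>1$; the borderline case $N=3$ is handled by extracting the extra quadratic correction $-\tfrac{p}{2}\int u_0^{p-1}u_\varepsilon^2\,dx$ from the integral-form Taylor remainder of $(u_0+tu_\varepsilon)^{p+1}$, possibly combined with a localized contribution from the sublinear $\lambda u^q$ term on the concentration set $\{tu_\varepsilon \geq Cu_0\}$.

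\textbf{Main obstacle.} The principal difficulty is the expansion of the double Riesz integral, since the kernel $|x-y|^{-\mu}$ couples the two integration variables and a direct pointwise Taylor expansion is not available. My approach is to apply the three-term inequality to $(u_0+tu_\varepsilon)^{2_\mu^*}$ separately at $x$ and at $y$, multiply the two three-term lower bounds, and then subtract the three products that are absorbed by $N(u_0)$ and by the linearization of $N$ at $u_0$ (the latter vanishes in $\bar J_\lambda$ because $u_0$ is a critical point of $J_\lambda$); the six remaining products are all non-negative, and the diagonal one $t^{2\cdot 2_\mu^*}u_\varepsilon(x)^{2_\mu^*}u_\varepsilon(y)^{2_\mu^*}$ furnishes exactly the term needed. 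This argument requires $2_\mu^* \geq 2$, i.e.\ $\mu \leq 4$; for $\mu > 4$ (which forces $N\geq 5$), the three-term expansion must be replaced by a localized comparison that exploits the concentration of $u_\varepsilon$ near the origin. A secondary subtle point is dimension $N=3$, where the standard error and the available superlinear correction are both of order $\varepsilon$, and the strict inequality rests on a quantitative comparison of the leading constants.
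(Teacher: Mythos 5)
Your reduction $\bar J_{\lambda}(tu_{\varepsilon})=J_{\lambda}(u_{0}+tu_{\varepsilon})-J_{\lambda}(u_{0})$ and the cancellation of the linear-in-$t$ contributions via the criticality of $u_{0}$ are fine, but the argument has a genuine gap, and it comes precisely from the terms you choose to discard. After expanding the double Riesz integral you keep only the diagonal product $t^{2\cdot2_{\mu}^{\ast}}u_{\varepsilon}(x)^{2_{\mu}^{\ast}}u_{\varepsilon}(y)^{2_{\mu}^{\ast}}$ and throw away the remaining cross products as ``non-negative,'' so the only negative correction left to beat the error $O(\varepsilon^{\min\{N-2,\,N-\mu/2\}})$ from \eqref{C9}--\eqref{C10} is $-\frac{t^{p+1}}{p+1}\int_{\Omega}u_{\varepsilon}^{p+1}dx$. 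But $\int_{\Omega}u_{\varepsilon}^{p+1}dx=O\big(\varepsilon^{\min\{N-\frac{(N-2)(p+1)}{2},\,\frac{(N-2)(p+1)}{2}\}}\big)$, and this exponent is strictly below $N-2$ only when $(N-2)(p+1)>4$. For $N=3$ and $1<p\leq 3$ (a range permitted by the hypotheses, since $2^{\ast}-1=5$) one has $\min\{3-\frac{p+1}{2},\frac{p+1}{2}\}\geq 1=N-2$, so the correction is at best of the \emph{same} order as the error and cannot yield the strict inequality. Your proposed rescue for $N=3$ --- the quadratic remainder $-\frac{p}{2}\int u_{0}^{p-1}u_{\varepsilon}^{2}dx$, itself only $O(\varepsilon)$ in dimension $3$, ``possibly combined with'' a contribution of the sublinear term --- is exactly the delicate borderline comparison of constants that you do not carry out; as stated, the lemma is therefore not proved in its full generality.

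The paper avoids all of this by keeping one of the cross terms you drop. Since $u_{0}\geq C_{1}>0$ on $B_{\delta}$, the interaction term $2_{\mu}^{\ast}\int_{\Omega}\int_{\Omega}\frac{|u_{\varepsilon}(x)|^{2_{\mu}^{\ast}}u_{0}(y)|u_{\varepsilon}(y)|^{2_{\mu}^{\ast}-1}}{|x-y|^{\mu}}dxdy$ is bounded below by a constant multiple of $\int_{B_{\delta}}\int_{B_{\delta}}\frac{|u_{\varepsilon}(x)|^{2_{\mu}^{\ast}}|u_{\varepsilon}(y)|^{2_{\mu}^{\ast}-1}}{|x-y|^{\mu}}dxdy\geq O(\varepsilon^{\frac{N-2}{2}})$ (this is \eqref{F16}), and $\frac{N-2}{2}<\min\{N-2,N-\frac{\mu}{2}\}$ for every $N\geq3$ and $0<\mu<N$. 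This produces a negative contribution $-C_{2}t^{2\cdot2_{\mu}^{\ast}-1}O(\varepsilon^{\frac{N-2}{2}})$ that dominates the error uniformly, with no case distinction on $N$ or $p$ and without using the $u^{p}$ or $\lambda u^{q}$ perturbations at all (both are simply dropped as favourable). Your observation that the pointwise inequality $(a+b)^{r}\geq a^{r}+b^{r}+ra^{r-1}b$ requires $r\geq2$, i.e.\ $2_{\mu}^{\ast}\geq2$, i.e.\ $\mu\leq4$, is a legitimate caveat --- it applies equally to the paper's own expansion when $N\geq5$ and $\mu>4$ --- but it does not repair the low-dimensional failure of your main estimate.
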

\begin{proof}  Since
$$
(a+b)^{p}\geq a^{p}+b^{p}+pa^{p-1}b
$$
for every $a,b\geq0$ and $p>1$, then

$$\aligned
&\hspace{0.5cm}\int_{\Omega}\int_{\Omega}\frac{|(u_{\varepsilon}+u_{0})(x)|^{2_{\mu}^{\ast}}|(u_{\varepsilon}+u_{0})(y)|^{2_{\mu}^{\ast}}}
{|x-y|^{\mu}}dxdy\\
&\geq\int_{\Omega}\int_{\Omega}\frac{|u_{\varepsilon}(x)|^{2_{\mu}^{\ast}}|u_{\varepsilon}(y)|^{2_{\mu}^{\ast}}}
{|x-y|^{\mu}}dxdy+\int_{\Omega}\int_{\Omega}\frac{|u_{0}(x)|^{2_{\mu}^{\ast}}|u_{0}(y)|^{2_{\mu}^{\ast}}}
{|x-y|^{\mu}}dxdy+2_{\mu}^{\ast}\int_{\Omega}\int_{\Omega}\frac{|u_{\varepsilon}(x)|^{2_{\mu}^{\ast}}|u_{0}(y)||u_{\varepsilon}(y)|^{2_{\mu}^{\ast}-1}}
{|x-y|^{\mu}}dxdy\\
&\hspace{0.5cm}+2_{\mu}^{\ast}\int_{\Omega}\int_{\Omega}\frac{|u_{0}(y)|^{2_{\mu}^{\ast}}|u_{0}(x)|^{2_{\mu}^{\ast}-1}|u_{\varepsilon}(x)|}
{|x-y|^{\mu}}dxdy+2_{\mu}^{\ast}\int_{\Omega}\int_{\Omega}\frac{|u_{0}(x)|^{2_{\mu}^{\ast}}|u_{0}(y)|^{2_{\mu}^{\ast}-1}|u_{\varepsilon}(y)|}
{|x-y|^{\mu}}dxdy\\
&\geq\int_{\Omega}\int_{\Omega}\frac{|u_{\varepsilon}(x)|^{2_{\mu}^{\ast}}|u_{\varepsilon}(y)|^{2_{\mu}^{\ast}}}
{|x-y|^{\mu}}dxdy+\int_{\Omega}\int_{\Omega}\frac{|u_{0}(x)|^{2_{\mu}^{\ast}}|u_{0}(y)|^{2_{\mu}^{\ast}}}
{|x-y|^{\mu}}dxdy\\
&\hspace{0.5cm}+2_{\mu}^{\ast}C_{1}\int_{B_{\delta}}\int_{B_{\delta}}\frac{|u_{\varepsilon}(x)|^{2_{\mu}^{\ast}}|u_{\varepsilon}(y)|^{2_{\mu}^{\ast}-1}}
{|x-y|^{\mu}}dxdy+2\cdot2_{\mu}^{\ast}\int_{\Omega}\int_{\Omega}\frac{|u_{0}(y)|^{2_{\mu}^{\ast}}|u_{0}(x)|^{2_{\mu}^{\ast}-1}|u_{\varepsilon}(x)|}
{|x-y|^{\mu}}dxdy,
\endaligned
$$
thanks to $u_{0}\geq C_{1}>0$ on $B_{\delta}$.
On the other hand, since for every $u\geq0$
$$
g(u)\geq\Big(\int_{\Omega}\frac{|u_{0}+u|^{2_{\mu}^{\ast}}}{|x-y|^{\mu}}dy\Big)
|u_{0}+u|^{2_{\mu}^{\ast}-2}(u_{0}+u)-\Big(\int_{\Omega}\frac{|u_{0}|^{2_{\mu}^{\ast}}}
{|x-y|^{\mu}}dy\Big)|u_{0}|^{2_{\mu}^{\ast}-2}u_{0},
$$
we have
$$\aligned
\bar{J}_{\lambda}(u_{\varepsilon})&\leq \frac{1}{2}\int_{\Omega}|\nabla u_{\varepsilon}|^{2}dx-\frac{1}{2\cdot2_{\mu}^{\ast}}
\int_{\Omega}\int_{\Omega}\frac{|(u_{\varepsilon}+u_{0})(x)|^{2_{\mu}^{\ast}}|(u_{\varepsilon}+u_{0})(y)|^{2_{\mu}^{\ast}}}
{|x-y|^{\mu}}dxdy\\
&\hspace{10.14mm}+\frac{1}{2\cdot2_{\mu}^{\ast}}
\int_{\Omega}\int_{\Omega}\frac{|u_{0}(x)|^{2_{\mu}^{\ast}}|u_{0}(y)|^{2_{\mu}^{\ast}}}
{|x-y|^{\mu}}dxdy+\int_{\Omega}\Big(\int_{\Omega}
\frac{|u_{0}|^{2_{\mu}^{\ast}}}{|x-y|^{\mu}}dy\Big)|u_{0}|^{2_{\mu}^{\ast}-2}u_{0}u_{\varepsilon}dx\\
&\leq\frac{1}{2}\int_{\Omega}|\nabla u_{\varepsilon}|^{2}dx-\frac{1}{2\cdot2_{\mu}^{\ast}}
\int_{\Omega}\int_{\Omega}\frac{|u_{\varepsilon}(x)|^{2_{\mu}^{\ast}}|u_{\varepsilon}(y)|^{2_{\mu}^{\ast}}}
{|x-y|^{\mu}}dxdy-C_{2}\int_{B_{\delta}}\int_{B_{\delta}}\frac{|u_{\varepsilon}(x)|^{2_{\mu}^{\ast}}
|u_{\varepsilon}(y)|^{2_{\mu}^{\ast}-1}}
{|x-y|^{\mu}}dxdy.
\endaligned
$$
By a direct computation, we know
\begin{equation}\label{F16}
\aligned
\int_{B_{\delta}}\int_{B_{\delta}}&\frac{|u_{\varepsilon}(x)|^{2_{\mu}^{\ast}}
|u_{\varepsilon}(y)|^{2_{\mu}^{\ast}-1}}
{|x-y|^{\mu}}dxdy\\
&=\int_{B_{\delta}}\int_{B_{\delta}}\frac{|U_{\varepsilon}(x)|^{2_{\mu}^{\ast}}
|U_{\varepsilon}(y)|^{2_{\mu}^{\ast}-1}}{|x-y|^{\mu}}dxdy\\
&=\varepsilon^{\frac{2\mu-3N-2}{2}}[N(N-2)]^{\frac{3N-2\mu+2}{4}}
\int_{B_{\delta}}\int_{B_{\delta}}\frac{1}
{(1+|\frac{x}{\varepsilon}|^{2})^{\frac{2N-\mu}{2}}|x-y|^{\mu}(1+|\frac{y}
{\varepsilon}|^{2})^{\frac{N-\mu+2}{2}}}dxdy\\
&=\varepsilon^{\frac{2\mu-3N-2}{2}}[N(N-2)]^{\frac{3N-2\mu+2}{4}}\varepsilon^{2N-\mu}
\int_{B_{\frac{\delta}{\varepsilon}}}\int_{B_{\frac{\delta}{\varepsilon}}}\frac{1}
{(1+|x|^{2})^{\frac{2N-\mu}{2}}|x-y|^{\mu}
(1+|y|^{2})^{\frac{N-\mu+2}{2}}}dxdy\\
&\geq O(\varepsilon^{\frac{N-2}{2}})\int_{B_{\delta}}\int_{B_{\delta}}\frac{1}
{(1+|x|^{2})^{\frac{2N-\mu}{2}}|x-y|^{\mu}
(1+|y|^{2})^{\frac{N-\mu+2}{2}}}dxdy\\
&=O(\varepsilon^{\frac{N-2}{2}})
\endaligned
\end{equation}
provided $\varepsilon<1$. Therefore, by \eqref{C9}, \eqref{C10} and \eqref{F16}, we have
$$\aligned
\bar{J}_{\lambda}(tu_{\varepsilon})&\leq\frac{t^{2}}{2}\int_{\Omega}|\nabla u_{\varepsilon}|^{2}dx-\frac{t^{2\cdot2_{\mu}^{\ast}}}{2\cdot2_{\mu}^{\ast}}
\int_{\Omega}\int_{\Omega}\frac{|u_{\varepsilon}(x)|^{2_{\mu}^{\ast}}|u_{\varepsilon}(y)|^{2_{\mu}^{\ast}}}
{|x-y|^{\mu}}dxdy-C_{2}t^{2\cdot2_{\mu}^{\ast}-1}\int_{\Omega}\int_{\Omega}\frac{|u_{\varepsilon}(x)|^{2_{\mu}^{\ast}}
|u_{\varepsilon}(y)|^{2_{\mu}^{\ast}-1}}
{|x-y|^{\mu}}dxdy\\
&\leq\frac{t^{2}}{2}(C(N,\mu)^{\frac{N-2}{2N-\mu}\cdot\frac{N}{2}}S_{H,L}^{\frac{N}{2}}+O(\varepsilon^{N-2}))
-\frac{t^{2\cdot2_{\mu}^{\ast}}}{2\cdot2_{\mu}^{\ast}}(C(N,\mu)^{\frac{N}{2}}S_{H,L}^{\frac{2N-\mu}{2}}-O(\varepsilon^{N-\frac{\mu}{2}}))
-t^{2\cdot2_{\mu}^{\ast}-1}O(\varepsilon^{\frac{N-2}{2}})\\
&:=g(t).
\endaligned
$$
It is clear that $g(t)\rightarrow -\infty$ as $t\rightarrow+\infty$. It follows that there exists $t_{\varepsilon}>0$ such that $\sup_{t>0}g(t)$ is attained at $t_{\varepsilon}$. Differentiating $g(t)$ and equaling to zero, we obtain that
$$
t_{\varepsilon}(C(N,\mu)^{\frac{N-2}{2N-\mu}\cdot\frac{N}{2}}S_{H,L}^{\frac{N}{2}}+O(\varepsilon^{N-2}))
-t_{\varepsilon}^{2\cdot2_{\mu}^{\ast}-1}(C(N,\mu)^{\frac{N}{2}}S_{H,L}^{\frac{2N-\mu}{2}}-O(\varepsilon^{N-\frac{\mu}{2}}))
-t_{\varepsilon}^{2\cdot2_{\mu}^{\ast}-2}O(\varepsilon^{\frac{N-2}{2}})=0
$$
and so
$$
t_{\varepsilon}<\Big(\frac{C(N,\mu)^{\frac{N-2}{2N-\mu}\cdot\frac{N}{2}}S_{H,L}^{\frac{N}{2}}+O(\varepsilon^{N-2})}
{C(N,\mu)^{\frac{N}{2}}S_{H,L}^{\frac{2N-\mu}{2}}-O(\varepsilon^{N-\frac{\mu}{2}})}\Big)^{\frac{1}{22_{\mu}^{\ast}-2}}:=S_{H,L}(\varepsilon)
$$
and there exists $t_{0}>0$ such that for $\varepsilon>0$ small enough
$$
t_{\varepsilon}>t_{0}.
$$
Since the function
$$
t\mapsto \frac{t^{2}}{2}(C(N,\mu)^{\frac{N-2}{2N-\mu}\cdot\frac{N}{2}}S_{H,L}^{\frac{N}{2}}+O(\varepsilon^{N-2}))
-\frac{t^{2\cdot2_{\mu}^{\ast}}}{2\cdot2_{\mu}^{\ast}}(C(N,\mu)^{\frac{N}{2}}S_{H,L}^{\frac{2N-\mu}{2}}-O(\varepsilon^{N-\frac{\mu}{2}}))
$$
is increasing on $[0,S_{H,L}(\varepsilon)]$, we have
$$\aligned
\max_{t\geq0}\bar{J}_{\lambda}(tu_{\varepsilon})
&\leq\frac{N+2-\mu}{4N-2\mu}\Big(\frac{C(N,\mu)^{\frac{N-2}{2N-\mu}\cdot\frac{N}{2}}S_{H,L}^{\frac{N}{2}}+O(\varepsilon^{N-2})}
{\Big(C(N,\mu)^{\frac{N}{2}}S_{H,L}^{\frac{2N-\mu}{2}}-O(\varepsilon^{N-\frac{\mu}{2}})\Big)^{\frac{N-2}{2N-\mu}}}\Big)^{\frac{2N-\mu}{N+2-\mu}}
-O(\varepsilon^{\frac{N-2}{2}})\\
&\leq\frac{N+2-\mu}{4N-2\mu}S_{H,L}^{\frac{2N-\mu}{N+2-\mu}}+O(\varepsilon^{\min\{N-2,N-\frac{\mu}{2}\}})
-O(\varepsilon^{\frac{N-2}{2}})\\
&<\frac{N+2-\mu}{4N-2\mu}S_{H,L}^{\frac{2N-\mu}{N+2-\mu}},\\
\endaligned
$$
thanks to $t_{0}<t_{\varepsilon}<S_{H,L}(\varepsilon)$ and \eqref{F16}.
\end{proof}

\noindent
{\bf Proof of Theorem \ref{EXS4}.} For every $u_{1}\in H_{0}^{1}(\Omega)\backslash\ \{0\}$, one easily checks that
$$
\bar{J}_{\lambda}(tu_{1})<0
$$
for $t>0$ large enough. Combining with Lemma \ref{EAC8}, we have $\bar{J}_{\lambda}$ has the Mountain pass geometry. Then there
exists a $(PS)$ sequence $\{u_{n}\}$ such that $\bar{J}_{\lambda}(u_{n})\rightarrow c$ and $ \bar{J}_{\lambda}'(u_{n})\rightarrow0$ in $H_{0}^{1}(\Omega)^{-1}$ at the minimax level
$$
c^*=\inf\limits_{\gamma\in\Gamma}\max\limits_{t\in[0,1]}\bar{J}_{\lambda}(\gamma(t))>0,
$$
where
$$
\Gamma:=\{\gamma\in C([0,1],H_{0}^{1}(\Omega)):\gamma(0)=0,\bar{J}_{\lambda}(\gamma(1))<0\}.
$$

From Lemma \ref{EAC10}, we know there exists $\varepsilon>0$ small enough such that
$$
\sup_{t\geq0}\bar{J}_{\lambda}(tu_{\varepsilon})<\frac{N+2-\mu}{4N-2\mu}S_{H,L}^{\frac{2N-\mu}{N+2-\mu}}.
$$
Therefore, by the definition of $c^*$, we know $c^*<\frac{N+2-\mu}{4N-2\mu}S_{H,L}^{\frac{2N-\mu}{N+2-\mu}}$. This estimate jointly with Lemma \ref{EAC9} and the Mountain Pass Theorem if the minimax energy level is positive, or
the refinement of the Mountain Pass Theorem \cite{GP} if the minimax level is zero, gives the existence of a second solution to \eqref{SCCE2}.  $\hfill{} \Box$

\section{\large Infinitely many solutions in the case of a sublinear perturbation}
In this Section, we will study the existence of infinitely many solutions for the critical Choquard equation with  sublinear local perturbation, i.e. the equation \eqref{CCE1} with the exponent of the perturbation satisfying  $0<q<1$. By applying the Dual Fountain Theorem in \cite{BW}, we are going to prove that the energy functional $J_{\lambda}$ has infinitely many critical values.

We denote the sequence of eigenvalues of the operator $-\Delta$ on $\Omega$ with homogeneous Dirichlet boundary data by
$$
0<\lambda_{1}<\lambda_{2}\leq...\leq \lambda_{j}\leq\lambda_{j+1}\leq...
$$
Moreover, $\{e_{j}\}_{j\in\mathbb{N}}\subset L^{\infty}(\Omega)$ will be the sequence of eigenfunctions corresponding to $\lambda_{j}$ which is also an orthogonal basis of $H_{0}^{1}(\Omega)$. Define $X_{j}:=\R e_{j}$, we will use the following notations:
$$
Y_{k}:=\oplus_{j=0}^{k}X_{j}, Z_{k}:=\overline{\oplus_{j=k}^{\infty}X_{j}},
$$
$$
B_{k}:=\{u\in Y_{k}: \|u\|\leq\rho_{k}\}, N_{k}:=\{u\in Z_{k}: \|u\|\leq r_{k}\}
$$
where $\rho_{k}>r_{k}>0$.

\begin{Def}\label{MA2} (See \cite{Wi}.)  Let $X$ be a Banach space, $I\in C^1(X,\R)$ and $c\in \R$. The function $I$ satisfies the $(PS)_c^{\ast}$ condition (with respect to $(Y_{n})$) if any sequence $\{u_{n}\}\subset X$ such that
$$
u_{n_{j}}\in Y_{n_{j}}, I(u_{n_{j}})\rightarrow c,\ \hbox{and}\ \ \  I|_{Y_{n_{j}}}^{'}(u_{n_{j}})\rightarrow 0,\ \ \  \hbox{as}\ \ n_{j}\rightarrow+\infty
$$
contains a subsequence converging to a critical point of $I$.
\end{Def}

\begin{thm}\label{MA3}
[Dual Fountain Theorem] (See \cite{BW}.)
Let $X$ be a Banach space, $I\in C^1(X,\R)$ is an even functional. If, for every $k\geq k_{0}\geq2$, there exists $\rho_{k}>r_{k}>0$ such that \\
$(B_{1})$ $a_{k}:=\inf_{u\in Z_{k},\|u\|=\rho_{k}}I(u)\geq0$,\\
$(B_{2})$ $b_{k}:=\max_{u\in Y_{k},\|u\|=r_{k}}I(u)<0$,\\
$(B_{3})$ $d_{k}:=\inf_{u\in Z_{k},\|u\|\leq\rho_{k}}I(u)\rightarrow0$, $k\rightarrow\infty$,\\
$(B_{4})$ $I$ satisfies the $(PS)_c^{\ast}$ condition for every $[d_{k_{0}},0)$,\\
then $I$ has a sequence of negative critical values converging to 0.
\end{thm}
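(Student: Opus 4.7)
The plan is to prove the Dual Fountain Theorem by constructing, for each $k \geq k_{0}$, a minimax value $c_{k}$ through Krasnoselskii genus theory, showing via topological intersection arguments that $c_{k}\in[d_{k},b_{k}]\subset(-\infty,0)$, and showing via a $(PS)_{c}^{\ast}$-compatible deformation lemma that each $c_{k}$ is an actual critical value of $I$. Since $d_{k}\to 0^{-}$ by $(B_{3})$ and $c_{k}\leq b_{k}<0$, the sequence $\{c_{k}\}$ will be the desired sequence of negative critical values of $I$ converging to $0$.

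For the minimax setup, denote by $\gamma(\cdot)$ the Krasnoselskii genus and, for each $k\geq k_{0}$, define the admissible class
\begin{equation*}
\Sigma_{k}:=\{A\subset \overline{B}_{\rho_{k}}:A\text{ compact},\ A=-A,\ 0\notin A,\ \gamma(A)\geq k+1\},
\end{equation*}
where $\overline{B}_{\rho_{k}}=\{u\in X:\|u\|\leq\rho_{k}\}$, and set
\begin{equation*}
c_{k}:=\inf_{A\in\Sigma_{k}}\sup_{u\in A}I(u).
\end{equation*}
The upper bound $c_{k}\leq b_{k}<0$ follows by choosing the test set $A_{k}:=\{u\in Y_{k}:\|u\|=r_{k}\}$, which is compact, symmetric, contained in $\overline{B}_{\rho_{k}}$ (since $r_{k}<\rho_{k}$), and has genus $\gamma(A_{k})=\dim Y_{k}=k+1$, so $A_{k}\in\Sigma_{k}$; then $\sup_{A_{k}}I=b_{k}<0$ by $(B_{2})$. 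The lower bound $c_{k}\geq d_{k}$ follows from the classical genus intersection principle: since $\mathrm{codim}(Z_{k})=\dim Y_{k-1}=k<k+1\leq\gamma(A)$ for every $A\in\Sigma_{k}$, we have $A\cap Z_{k}\neq\emptyset$, so picking $u_{0}\in A\cap Z_{k}\subset Z_{k}\cap\overline{B}_{\rho_{k}}$ yields $\sup_{A}I\geq I(u_{0})\geq d_{k}$ by definition of $d_{k}$.

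To show that $c_{k}$ is a critical value, assume for contradiction that it is regular. Using $(B_{4})$ together with a Galerkin-style equivariant deformation lemma, one produces an odd homeomorphism $\eta:X\to X$ and $\varepsilon>0$ with $\eta(\{I\leq c_{k}+\varepsilon\})\subset\{I\leq c_{k}-\varepsilon\}$ while preserving the ball $\overline{B}_{\rho_{k}}$ (the barrier $(B_{1})$ keeps the downward flow trapped inside). Taking $A\in\Sigma_{k}$ that almost realizes $c_{k}$, the image $\eta(A)$ lies again in $\Sigma_{k}$ (oddness and $\gamma(\eta(A))=\gamma(A)\geq k+1$ by homeomorphism invariance of the genus), but with $\sup_{\eta(A)}I\leq c_{k}-\varepsilon$, contradicting the definition of $c_{k}$. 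Hence each $c_{k}$ is a critical value, and combining $c_{k}\leq b_{k}<0$ with $c_{k}\geq d_{k}$ and $(B_{3})$ gives $c_{k}\to 0^{-}$, completing the argument.

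The main obstacle will be the construction of the $(PS)_{c}^{\ast}$-adapted equivariant deformation. The classical deformation lemma relies on a globally defined pseudogradient vector field together with the full $(PS)_{c}$ condition on $X$, whereas $(PS)_{c}^{\ast}$ only controls sequences confined to the finite-dimensional slices $Y_{n_{j}}$. One must therefore work Galerkin-wise, constructing odd locally Lipschitz pseudogradient fields on each $Y_{n}\setminus K_{c,n}$ (where $K_{c,n}$ is the finite critical set of $I|_{Y_{n}}$ at level $c$), cutting off symmetrically near these critical sets, patching via odd partitions of unity compatible with the filtration $Y_{1}\subset Y_{2}\subset\cdots$, and finally taking a diagonal limit that inherits the sublevel-set inclusion uniformly in $n$. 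A secondary technical point is ensuring that the deformation preserves the ball $\overline{B}_{\rho_{k}}$ so that $\eta(A)\in\Sigma_{k}$; this is arranged by truncating the pseudogradient flow once it would exit $\overline{B}_{\rho_{k}}$ and using $(B_{1})$ to rule out critical behaviour below level $0$ on $\partial B_{\rho_{k}}\cap Z_{k}$.
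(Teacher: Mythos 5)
The paper itself gives no proof of this theorem: it is quoted verbatim from Bartsch--Willem \cite{BW} (see also Willem, \emph{Minimax Theorems}, Theorem 3.18), so your attempt has to be measured against the standard argument there. Your minimax skeleton is sound and agrees in outcome with that argument: the genus class $\Sigma_{k}$, the upper bound $c_{k}\leq b_{k}<0$ via the sphere $\{u\in Y_{k}:\|u\|=r_{k}\}$ (which indeed has genus $k+1$), the lower bound $c_{k}\geq d_{k}$ via the intersection property $\gamma(A)>\mathrm{codim}\,Z_{k}\Rightarrow A\cap Z_{k}\neq\emptyset$, and the conclusion $c_{k}\to 0^{-}$ from $(B_{3})$ are all correct, and they pin $c_{k}\in[d_{k},b_{k}]\subset(-\infty,0)$ exactly as the critical values produced in \cite{BW}.

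The criticality step, however, has a genuine gap, on two counts. First, your ball-invariance claim is false as stated: $(B_{1})$ gives $I\geq 0$ only on $\partial B_{\rho_{k}}\cap Z_{k}$, whereas on the rest of the sphere $\partial B_{\rho_{k}}$ --- points with nontrivial $Y_{k-1}$-component --- there is no sign condition at all, so the descent flow can perfectly well exit $\overline{B}_{\rho_{k}}$ through such points even while $I$ stays negative along trajectories; and once you ``truncate the pseudogradient flow'' at the boundary, the stopped trajectories no longer satisfy $\eta(\{I\leq c_{k}+\varepsilon\})\subset\{I\leq c_{k}-\varepsilon\}$, so $\eta(A)\in\Sigma_{k}$ with strictly smaller supremum is not guaranteed and the contradiction collapses. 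Second, the ``diagonal limit'' of Galerkin-wise deformations is not a construction: odd deformations built separately on the slices $Y_{n}$ have no reason to converge, even pointwise, to a homeomorphism of $X$, and $(PS)_{c}^{\ast}$ supplies no uniformity in $n$ that would force the sublevel-set inclusion to survive a limit --- this is precisely the crux you wave at. The actual proof in \cite{BW} sidesteps both problems by never leaving the Galerkin spaces: the minimax and the (classical, finite-dimensional) equivariant deformation are carried out inside each $Y_{n}$, where the constraint set is compact and the boundary data sit on the sphere of radius $\rho_{k}$ in $Z_{k}\cap Y_{n}$, exactly where $(B_{1})$ applies; this yields points $u_{n}\in Y_{n}$ with $I(u_{n})\to c\in[d_{k},b_{k}]$ and $I|_{Y_{n}}'(u_{n})\to 0$, and then hypothesis $(B_{4})$ is invoked \emph{directly} on this sequence --- note that the very formulation of $(PS)_{c}^{\ast}$ in Definition 5.1 is tailored to sequences of this restricted type --- to produce a critical point at level $c$. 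No deformation on all of $X$ is ever constructed. To repair your proof you would either have to establish an honest $(PS)^{\ast}$-adapted deformation lemma on $X$ (a nontrivial result you cannot simply assert) or restructure the argument along the finite-dimensional lines above.
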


We are ready to establish the following convergence criteria for the $ (PS) _c$ sequences.
\begin{lem}\label{MA4} Let $0<q<1$. If there exists $M_0>0$ such that, for any $\lambda>0$ and
\begin{equation}\label{E1}
c<\frac{N+2-\mu}{4N-2\mu}S_{H,L}^{\frac{2N-\mu}{N+2-\mu}}-
M_0\lambda^{\frac{2\cdot2_{\mu}^{\ast}}{2\cdot2_{\mu}^{\ast}-q-1}},
\end{equation}
then $J_{\lambda}$ satisfies the $(PS)_c^{\ast}$ condition.
\end{lem}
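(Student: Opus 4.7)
The strategy follows the blueprint of Lemma \ref{ConPro} but must be adapted in two directions: replacing the usual $(PS)_c$ condition by the Galerkin-type $(PS)^{\ast}_c$ condition, and handling a sublinear (rather than superlinear) perturbation. First, to show boundedness of a sequence $\{u_{n_j}\}$ with $u_{n_j}\in Y_{n_j}$, $J_{\lambda}(u_{n_j})\to c$ and $J_{\lambda}|_{Y_{n_j}}^{'}(u_{n_j})\to 0$, I would form $J_{\lambda}(u_{n_j})-\tfrac{1}{2\cdot 2_{\mu}^{\ast}}\langle J_{\lambda}^{'}(u_{n_j}),u_{n_j}\rangle$; since $u_{n_j}\in Y_{n_j}$ the pairing equals $\langle J_{\lambda}|_{Y_{n_j}}^{'}(u_{n_j}),u_{n_j}\rangle=o(\|u_{n_j}\|)$, and the resulting identity reads
\[
\Big(\tfrac{1}{2}-\tfrac{1}{2\cdot 2_{\mu}^{\ast}}\Big)\|u_{n_j}\|^{2}-\Big(\tfrac{1}{q+1}-\tfrac{1}{2\cdot 2_{\mu}^{\ast}}\Big)\lambda\int_{\Omega}u_{n_j}^{q+1}dx\leq C+o(\|u_{n_j}\|).
\]
Since $q+1<2$ and $\int_{\Omega}|u|^{q+1}dx\leq C\|u\|^{q+1}$, this yields boundedness, so up to a subsequence $u_{n_j}\rightharpoonup u_{0}$ in $H_{0}^{1}(\Omega)$ with strong convergence in $L^{r}(\Omega)$ for $r<2^{\ast}$. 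To identify $u_{0}$ as a critical point I would fix any $\varphi\in Y_{m}$: for $n_j\geq m$ one has $\varphi\in Y_{n_j}$, so $\langle J_{\lambda}^{'}(u_{n_j}),\varphi\rangle=\langle J_{\lambda}|_{Y_{n_j}}^{'}(u_{n_j}),\varphi\rangle\to 0$; the nonlocal term passes to the limit exactly as in Lemma \ref{WSo}, and density of $\bigcup_{m}Y_{m}$ yields $J_{\lambda}^{'}(u_{0})=0$, hence $\|u_{0}\|^{2}=\lambda\int_{\Omega}u_{0}^{q+1}dx+A$ with $A:=\int_{\Omega}\int_{\Omega}\frac{|u_{0}(x)|^{2_{\mu}^{\ast}}|u_{0}(y)|^{2_{\mu}^{\ast}}}{|x-y|^{\mu}}dxdy$.

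Next, set $v_{n}:=u_{n_j}-u_{0}$. The Brezis--Lieb lemma together with Lemma \ref{BLN} and the compact embedding $H_{0}^{1}(\Omega)\hookrightarrow L^{q+1}(\Omega)$ (which kills $\int_{\Omega}v_{n}^{q+1}dx$) gives $c=J_{\lambda}(u_{0})+\tfrac{1}{2}\|v_{n}\|^{2}-\tfrac{1}{2\cdot 2_{\mu}^{\ast}}\int_{\Omega}\int_{\Omega}\frac{|v_{n}(x)|^{2_{\mu}^{\ast}}|v_{n}(y)|^{2_{\mu}^{\ast}}}{|x-y|^{\mu}}dxdy+o(1)$ together with $\|v_{n}\|^{2}-\int_{\Omega}\int_{\Omega}\frac{|v_{n}(x)|^{2_{\mu}^{\ast}}|v_{n}(y)|^{2_{\mu}^{\ast}}}{|x-y|^{\mu}}dxdy\to 0$. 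Writing $\|v_{n}\|^{2}\to b$, the definition of $S_{H,L}$ forces either $b=0$ (giving $u_{n_j}\to u_{0}$ strongly and we are done) or $b\geq S_{H,L}^{(2N-\mu)/(N+2-\mu)}$; in the latter case $c\geq J_{\lambda}(u_{0})+\tfrac{N+2-\mu}{4N-2\mu}S_{H,L}^{(2N-\mu)/(N+2-\mu)}$, so to contradict \eqref{E1} it suffices to prove the energy lower bound $J_{\lambda}(u_{0})\geq -M_{0}\lambda^{2\cdot 2_{\mu}^{\ast}/(2\cdot 2_{\mu}^{\ast}-q-1)}$ for some $M_{0}>0$ independent of $\lambda$.

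This energy estimate is the crux, and producing exactly the exponent $\tfrac{2\cdot 2_{\mu}^{\ast}}{2\cdot 2_{\mu}^{\ast}-q-1}$ rather than the naive $\tfrac{2}{1-q}$ coming from a direct Sobolev--Young argument is the main obstacle. Using the critical point identity to eliminate $\|u_{0}\|^{2}$ one writes $J_{\lambda}(u_{0})=\alpha\|u_{0}\|^{2}-\beta\lambda\int_{\Omega}u_{0}^{q+1}dx$ with $\alpha=\tfrac{1}{2}-\tfrac{1}{2\cdot 2_{\mu}^{\ast}}>0$ and $\beta=\tfrac{1}{q+1}-\tfrac{1}{2\cdot 2_{\mu}^{\ast}}>0$. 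The boundedness of $\Omega$ enters decisively through the bound $|x-y|^{-\mu}\geq(\operatorname{diam}\Omega)^{-\mu}$ on $\Omega\times\Omega$, which yields $A\geq c_{1}|u_{0}|_{2_{\mu}^{\ast}}^{2\cdot 2_{\mu}^{\ast}}$; together with H\"older's inequality (using $q+1<2<2_{\mu}^{\ast}$) this gives $\int_{\Omega}|u_{0}|^{q+1}dx\leq c_{2}|u_{0}|_{2_{\mu}^{\ast}}^{q+1}\leq c_{3}A^{(q+1)/(2\cdot 2_{\mu}^{\ast})}$, and combining with $A\leq\|u_{0}\|^{2}$ leads to $\int_{\Omega}|u_{0}|^{q+1}dx\leq c_{4}\|u_{0}\|^{(q+1)/2_{\mu}^{\ast}}$. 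Young's inequality with the conjugate pair $\tfrac{2\cdot 2_{\mu}^{\ast}}{q+1}$ and $\tfrac{2\cdot 2_{\mu}^{\ast}}{2\cdot 2_{\mu}^{\ast}-q-1}$ then absorbs the $\|u_{0}\|$-factor into $\alpha\|u_{0}\|^{2}$, leaving $J_{\lambda}(u_{0})\geq -M_{0}\lambda^{2\cdot 2_{\mu}^{\ast}/(2\cdot 2_{\mu}^{\ast}-q-1)}$. Under \eqref{E1} this is incompatible with $b\geq S_{H,L}^{(2N-\mu)/(N+2-\mu)}$, so $b=0$ and $u_{n_j}\to u_{0}$ strongly, completing the proof.
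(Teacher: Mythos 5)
Your argument has the same skeleton as the paper's own proof: the weak limit $u_{0}$ of the Galerkin $(PS)^{\ast}$ sequence is identified as a critical point, the Br\'ezis--Lieb splitting gives the dichotomy $b=0$ or $b\geq S_{H,L}^{\frac{2N-\mu}{N-\mu+2}}$, and the contradiction with \eqref{E1} is obtained from a lower bound $J_{\lambda}(u_{0})\geq -M_{0}\lambda^{\frac{2\cdot 2_{\mu}^{\ast}}{2\cdot 2_{\mu}^{\ast}-q-1}}$. You make explicit two points the paper only gestures at: boundedness of $\{u_{n_{j}}\}$ via $J_{\lambda}-\frac{1}{2\cdot 2_{\mu}^{\ast}}\langle J_{\lambda}',\cdot\rangle$ (the combination $J_{\lambda}-\frac{1}{q+1}\langle J_{\lambda}',\cdot\rangle$ used in Lemma \ref{WSo}, to which the paper appeals, is the wrong one when $q<1$), and the test-function/density argument showing $J_{\lambda}'(u_{0})=0$ in the $(PS)^{\ast}$ setting. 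For the crucial energy bound the paper eliminates $\|u_{0}\|^{2}$, reduces everything to the Riesz energy $A$ of $u_{0}$ via $\int_{\Omega}u_{0}^{q+1}dx\leq CA^{\frac{q+1}{2\cdot 2_{\mu}^{\ast}}}$, and minimizes the explicit one-variable function in \eqref{E3}; you instead eliminate $A$, pass to $\|u_{0}\|^{\frac{q+1}{2_{\mu}^{\ast}}}$ and apply Young's inequality with exponents $\frac{2\cdot 2_{\mu}^{\ast}}{q+1}$ and $\frac{2\cdot 2_{\mu}^{\ast}}{2\cdot 2_{\mu}^{\ast}-q-1}$. The two computations are equivalent and rest on the same key inequality $\int_{\Omega}u_{0}^{q+1}dx\lesssim A^{\frac{q+1}{2\cdot 2_{\mu}^{\ast}}}$; your justification of it, via the kernel bound $|x-y|^{-\mu}\geq(\operatorname{diam}\Omega)^{-\mu}$ giving $A\geq c\,|u_{0}|_{2_{\mu}^{\ast}}^{2\cdot 2_{\mu}^{\ast}}$, is in fact more transparent than the paper's one-line passage through $|u_{0}|_{2^{\ast}}$ in \eqref{E2}.

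One caveat: your H\"older step $\int_{\Omega}u_{0}^{q+1}dx\leq c_{2}|u_{0}|_{2_{\mu}^{\ast}}^{q+1}$ is justified by the claim ``$q+1<2<2_{\mu}^{\ast}$'', but $2_{\mu}^{\ast}>2$ holds only when $\mu<4$. For $N\geq 5$ and $4<\mu<N$ one can have $2_{\mu}^{\ast}<q+1<2$, and then the embedding $L^{q+1}(\Omega)\supset L^{2_{\mu}^{\ast}}(\Omega)$ runs the wrong way, so this step fails. The paper's own chain in \eqref{E2} is no better justified in that regime, so this is a shared parameter restriction rather than a defect specific to your argument, but you should either impose $q+1\leq 2_{\mu}^{\ast}$ (equivalently restrict $\mu$ or $q$) or find another route to $\int_{\Omega}u_{0}^{q+1}dx\lesssim A^{\frac{q+1}{2\cdot 2_{\mu}^{\ast}}}$ there. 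Apart from this, the proof is correct and essentially the paper's.
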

\begin{proof}
Consider a sequence $\{u_{n_{j}}\}\subset H_{0}^{1}(\Omega)$ such that
$$
u_{n_{j}}\in Y_{n_{j}}, J_{\lambda}(u_{n_{j}})\rightarrow c,\ \hbox{and}\ \ \  J_{\lambda}|_{Y_{n_{j}}}^{'}(u_{n_{j}})\rightarrow 0,\ \ \  \hbox{as}\ \ n_{j}\rightarrow+\infty,
$$
where $c$ satisfies \eqref{E1}. As in the proof of Lemma \ref{WSo}, we have $u_0$ is a weak solution of problem \eqref{CCE1} with $0<q<1$, where $u_0\in H_{0}^{1}(\Omega)$ is the weak limit of $\{u_{n_{j}}\}$. Taking $\varphi=u_0\in H_{0}^{1}(\Omega)$ as a test function in \eqref{CCE1}, we have
$$
\int_{\Omega}|\nabla u_0|^{2}dx=
\lambda\int_{\Omega}u_0^{q+1}dx+
\int_{\Omega}\int_{\Omega}\frac{|u_0(x)|^{2_{\mu}^{\ast}}|u_0(y)|^{2_{\mu}^{\ast}}}{|x-y|^{\mu}}dxdy,
$$
and so
$$
J_{\lambda}(u_0)=(\frac{\lambda}{2}-\frac{\lambda}{q+1})\int_{\Omega}u_0^{q+1}dx+\frac{N+2-\mu}{4N-2\mu}
\int_{\Omega}\int_{\Omega}\frac{|u_0(x)|^{2_{\mu}^{\ast}}|u_0(y)|^{2_{\mu}^{\ast}}}
{|x-y|^{\mu}}dxdy.
$$
By Hl\"{o}der inequality and $0<q<1$, we have
$$
(\frac{\lambda}{2}-\frac{\lambda}{q+1})\int_{\Omega}u_0^{q+1}dx\geq
(\frac{\lambda}{2}-\frac{\lambda}{q+1})|\Omega|^{\frac{2^{\ast}-q-1}{2^{\ast}}}|u_0|_{2^{\ast}}^{q+1}
\geq-C\lambda \Big(\int_{\Omega}\int_{\Omega}\frac{|u_0(x)|^{2_{\mu}^{\ast}}|u_0(y)|^{2_{\mu}^{\ast}}}
{|x-y|^{\mu}}dxdy\Big)^{\frac{q+1}{2\cdot2_{\mu}^{\ast}}}
$$
and so
\begin{equation}\label{E2}
J_{\lambda}(u_0)\geq\frac{N+2-\mu}{4N-2\mu}
\int_{\Omega}\int_{\Omega}\frac{|u_0(x)|^{2_{\mu}^{\ast}}|u_0(y)|^{2_{\mu}^{\ast}}}
{|x-y|^{\mu}}dxdy-C\lambda \Big(\int_{\Omega}\int_{\Omega}\frac{|u_0(x)|^{2_{\mu}^{\ast}}|u_0(y)|^{2_{\mu}^{\ast}}}
{|x-y|^{\mu}}dxdy\Big)^{\frac{q+1}{2\cdot2_{\mu}^{\ast}}}.
\end{equation}
We define $M_0\geq0$ by
\begin{equation}\label{E3}
\min_{t>0}\Big(\frac{N+2-\mu}{4N-2\mu}t^{2\cdot2_{\mu}^{\ast}}-C\lambda t^{q+1}\Big)=-M_0\lambda^{\frac{2\cdot2_{\mu}^{\ast}}{2\cdot2_{\mu}^{\ast}-q-1}}.
\end{equation}

Denote by  $v_{n_{j}}:=u_{n_{j}}-u_0$, then we know $v_{n_{j}}\rightharpoonup0$ in $H_{0}^{1}(\Omega)$ and $v_{n_{j}}\rightarrow 0$ a.e. in $\Omega$.
From the proof of Lemma \ref{ConPro}, we have
\begin{equation}\label{E4}
c\leftarrow J_{\lambda}(u_{n_{j}})=J_{\lambda}(u_0)+\frac{1}{2}\int_{\Omega}|\nabla v_{n_{j}}|^{2}dx-\frac{1}{2\cdot2_{\mu}^{\ast}}\int_{\Omega}\int_{\Omega}\frac{|v_{n_{j}}(x)|^{2_{\mu}^{\ast}}
|v_{n_{j}}(y)|^{2_{\mu}^{\ast}}}
{|x-y|^{\mu}}dxdy+o_n(1)
\end{equation}
and
\begin{equation}\label{E5}
o_n(1)=\int_{\Omega}|\nabla v_{n_{j}}|^{2}dx-\int_{\Omega}\int_{\Omega}\frac{|v_{n_{j}}(x)|^{2_{\mu}^{\ast}}|v_{n_{j}}(y)|^{2_{\mu}^{\ast}}}
{|x-y|^{\mu}}dxdy.
\end{equation}
From \eqref{E5}, we know there exists a nonnegative constant $b$ such that
$$
\int_{\Omega}|\nabla v_{n_{j}}|^{2}dx\rightarrow b
$$
and
$$
\int_{\Omega}\int_{\Omega}\frac{|v_{n_{j}}(x)|^{2_{\mu}^{\ast}}|v_{n_{j}}(y)|^{2_{\mu}^{\ast}}}
{|x-y|^{\mu}}dxdy\rightarrow b,
$$
as $n_{j}\rightarrow+\infty$.
By the definition of the best constant $S_{H,L}$ in \eqref{S1}, we have
$$
S_{H,L}\Big(\int_{\Omega}\int_{\Omega}\frac{|v_{n_{j}}(x)|^{2_{\mu}^{\ast}}|v_{n_{j}}(y)|^{2_{\mu}^{\ast}}}
{|x-y|^{\mu}}dxdy\Big)^{\frac{N-2}{2N-\mu}}\leq\int_{\Omega}|\nabla v_{n_{j}}|^{2}dx,
$$
which yields $b\geq S_{H,L}b^{\frac{N-2}{2N-\mu}}$. Thus we have either $b=0$ or $b\geq S_{H,L}^{\frac{2N-\mu}{N-\mu+2}}$. If  $b\geq S_{H,L}^{\frac{2N-\mu}{N-\mu+2}}$,  then we obtain from \eqref{E2}, \eqref{E3} and \eqref{E4} that
$$\aligned
c&=\frac{N+2-\mu}{4N-2\mu}b+J_{\lambda}(u_0)\\
&\geq\frac{N+2-\mu}{4N-2\mu}S_{H,L}^{\frac{2N-\mu}{N-\mu+2}}+\frac{N+2-\mu}{4N-2\mu}
\int_{\Omega}\int_{\Omega}\frac{|u_0(x)|^{2_{\mu}^{\ast}}|u_0(y)|^{2_{\mu}^{\ast}}}
{|x-y|^{\mu}}dxdy-C\lambda \Big(\int_{\Omega}\int_{\Omega}\frac{|u_0(x)|^{2_{\mu}^{\ast}}|u_0(y)|^{2_{\mu}^{\ast}}}
{|x-y|^{\mu}}dxdy\Big)^{\frac{q+1}{2\cdot2_{\mu}^{\ast}}}\\
&\geq\frac{N+2-\mu}{4N-2\mu}S_{H,L}^{\frac{2N-\mu}{N-\mu+2}}-
M_0\lambda^{\frac{2\cdot2_{\mu}^{\ast}}{2\cdot2_{\mu}^{\ast}-q-1}},
\endaligned
$$
which contradicts with \eqref{E1}. Thus $b=0$, and
$$
\|u_{n_{j}}-u_0\|\rightarrow0
$$
as $n_{j}\rightarrow+\infty$. This ends the proof of Lemma \ref{MA4}.
\end{proof}

\noindent
{\bf Proof of Theorem \ref{MEXS}.}
Denote by
$$
\beta_{k}:=\sup_{u\in Z_{k},\|u\|=1}|u|_{q+1},
$$
by Lemma 3.8 of \cite{Wi}, we know
\begin{equation}\label{E6}
\beta_{k}\rightarrow0,k\rightarrow\infty.
\end{equation}
From the Sobolev embedding theorem and the Hardy-Littlewood-Sobolev inequality,  exists $R>0$ small enough such that, for any $\|u\|\leq R$,
$$
\frac{1}{2\cdot2_{\mu}^{\ast}}\int_{\Omega}\int_{\Omega}\frac{|u(x)|^{2_{\mu}^{\ast}}|u(y)|^{2_{\mu}^{\ast}}}
{|x-y|^{\mu}}dxdy\leq
\frac{1}{2\cdot2_{\mu}^{\ast}}C_{1}\|u\|^{2(\frac{2N-\mu}{N-2})}\leq\frac{1}{4}\|u\|^{2}.
$$
Thus we get, for all $u\in Z_{k}\backslash\ \{0\}, \|u\|\leq R$,
\begin{equation}\label{E7}
\aligned
J_{\lambda}(u)&\geq\frac{1}{2}\int_{\Omega}|\nabla u|^{2}dx-\frac{\lambda}{q+1}\int_{\Omega}| u|^{q+1}dx-\frac{1}{4}\|u\|^{2}\\
&\geq\frac{1}{4}\|u\|^{2}-\frac{\lambda}{q+1}\beta_{k}^{q+1}\|u\|^{q+1}.\\
\endaligned
\end{equation}
We choose $\rho_{k}:=(\frac{4\lambda\beta_{k}^{q+1}}{q+1})^{\frac{1}{1-q}}$. From \eqref{E6} and $0<q<1$, we have
$\rho_{k}\rightarrow0,k\rightarrow\infty$ and so there exists $k_{0}$ such that for every $\rho_{k}\leq R$ when $k\geq k_{0}$. Thus, for every $k\geq k_{0}$, there exists $\rho_{k}>0$ such that $a_{k}=\inf_{u\in Z_{k},\|u\|=\rho_{k}}J_{\lambda}(u)\geq0$. Condition $(B_{1})$ is thus proved.

Since $Y_{k}$ is a finite dimensional subspace of $H_{0}^{1}(\Omega)$, we have all norms on $Y_{k}$ are equivalent and so condition $(B_{2})$ is satisfied for every $r_{k}>0$ small enough when $\lambda>0$.

By \eqref{E7}, we know for all $u\in Z_{k}\backslash\ \{0\}, \|u\|\leq \rho_{k}, k\geq k_{0}$,
$$
J_{\lambda}(u)\geq-\frac{\lambda}{q+1}\beta_{k}^{q+1}\|u\|^{q+1}
\geq-\frac{\lambda}{q+1}\beta_{k}^{q+1}\rho_{k}^{q+1}.
$$
Then condition $(B_{3})$ is satisfied from $\beta_{k}\rightarrow0$, $\rho_{k}\rightarrow0$, $k\rightarrow\infty$.

We know that there exists $\lambda^{\ast}>0$ such that, for every $0<\lambda<\lambda^{\ast}$ and $c<0$, $J_{\lambda}$ satisfies the $(PS)_c^{\ast}$ condition from Lemma \ref{MA4}. By Theorem \ref{MA3}, we have there exists $\lambda^{\ast}>0$ such that, for every $0<\lambda<\lambda^{\ast}$, problem \eqref{CCE1} has a sequence of solutions $\{u_{n}\}\subset H_{0}^{1}(\Omega)$ such that $J_{\lambda}(u_{n})\rightarrow0$, $n\rightarrow\infty$. $\hfill{} \Box$

To obtain the multiplicity results for the subcritical nonlocal case, we need to recall the famous Fountain Theorem in \cite{B} which states as
\begin{thm}\label{MA5}
Let $X$ be a Banach space, $I\in C^1(X,\R)$ is an even functional. If, for every $k\in\N$, there exists $\rho_{k}>r_{k}>0$ such that \\
$(A_{1})$ $a_{k}:=\max_{u\in Y_{k},\|u\|=\rho_{k}}I(u)\leq0$,\\
$(A_{2})$ $b_{k}:=\inf_{u\in Z_{k},\|u\|=r_{k}}I(u)\rightarrow\infty$, $k\rightarrow\infty$,\\
$(A_{3})$ $I$ satisfies the $(PS)_c$ condition for every $c>0$,\\
then $I$ has an unbounded sequence of critical values converging to $\infty$.
\end{thm}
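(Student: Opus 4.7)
\medskip

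\noindent\textbf{Proof proposal for Theorem \ref{MA5} (Fountain Theorem).}
The plan is to realize each candidate critical value as a minimax over a family of odd continuous deformations of the ball in the finite-dimensional subspace $Y_k$, and then combine a topological intersection property with an equivariant deformation lemma to conclude. Throughout, the crucial structural input is that $I$ is even, which allows us to restrict attention to odd continuous maps and to perform odd deformations.

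First, for each $k$ I would introduce the admissible class
\[
\Gamma_k:=\bigl\{\gamma\in C(B_k,X)\ :\ \gamma\ \text{is odd},\ \gamma|_{\partial B_k}=\mathrm{id}\bigr\},
\]
where $B_k=\{u\in Y_k:\|u\|\le\rho_k\}$, and define the minimax value
\[
c_k:=\inf_{\gamma\in\Gamma_k}\ \max_{u\in B_k} I(\gamma(u)).
\]
Because $\mathrm{id}\in\Gamma_k$ and $Y_k$ is finite dimensional, $c_k$ is finite. The next step, which is the topological heart of the argument, is the intersection lemma: for every $\gamma\in\Gamma_k$,
\[
\gamma(B_k)\cap\bigl\{u\in Z_k:\|u\|=r_k\bigr\}\neq\emptyset.
\]
I would establish this by a Borsuk--Ulam/genus argument: using $\rho_k>r_k$ and the fact that $\gamma$ is the identity on $\partial B_k$, one composes $\gamma$ with the projection onto $Y_{k-1}$-type complements and argues that an odd map between spheres of the same dimension cannot avoid a given codimension-$\dim Y_{k-1}$ subspace. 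Equivalently, one can check that the Krasnoselskii genus of $\gamma(B_k)\cap\{\|u\|=r_k\}$ is positive, so its intersection with $Z_k$ is nonempty. Granted this, $\max_{B_k} I\circ\gamma\ge b_k$ for every $\gamma\in\Gamma_k$, hence
\[
c_k\ge b_k\longrightarrow\infty\qquad(k\to\infty)
\]
by hypothesis $(A_2)$, which already yields the divergence of the candidate critical values.

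It remains to prove that each $c_k$ is actually a critical value of $I$. Here I would argue by contradiction: suppose $c_k$ is a regular value. By the $(PS)_{c_k}$ condition $(A_3)$ (valid since $c_k\ge b_k>0$ for $k$ large) together with the standard quantitative deformation lemma applied to the even functional $I$, there exist $\varepsilon>0$ and an odd homeomorphism $\eta\in C(X,X)$ such that
\[
\eta\bigl(\{I\le c_k+\varepsilon\}\bigr)\subset\{I\le c_k-\varepsilon\},\qquad \eta|_{\{I\le 0\}}=\mathrm{id}.
\]
Oddness of $\eta$ is essential and is where the hypothesis that $I$ is even enters; it is obtained by symmetrizing the pseudo-gradient vector field. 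Choosing $\gamma\in\Gamma_k$ with $\max_{B_k}I\circ\gamma\le c_k+\varepsilon$, I form $\tilde\gamma:=\eta\circ\gamma$. Condition $(A_1)$ gives $I\le0$ on $\partial B_k$, hence $\eta\circ\gamma=\gamma$ there, so $\tilde\gamma\in\Gamma_k$; but $\max_{B_k}I\circ\tilde\gamma\le c_k-\varepsilon$, contradicting the definition of $c_k$. Therefore $c_k$ is a critical value.

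The genuinely delicate step is the intersection lemma, since it is where the compatible grading $Y_k,Z_k$ and the equivariant topological machinery meet; once it is in place, the deformation argument is routine but must carefully preserve oddness at every stage, including on the sublevel set $\{I\le0\}$ where the identity is enforced by $(A_1)$. The Palais--Smale hypothesis $(A_3)$ is used only in its standard role of guaranteeing the validity of the deformation lemma at the level $c_k$; since $c_k\to\infty$, one needs $(A_3)$ on an unbounded range of positive levels, which is exactly what the statement provides.
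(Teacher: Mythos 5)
The paper contains no proof of Theorem \ref{MA5}: it is recalled verbatim as Bartsch's Fountain Theorem from \cite{B} (cf.\ Theorem 3.6 of \cite{Wi}) and used as a black box, so there is no internal argument to compare against. Your proposal is correct and coincides with the canonical proof in those references — minimax over the odd class $\Gamma_k$; the intersection property, which in careful form is Borsuk--Ulam applied to $P\gamma$ on $\partial U$, where $U=\{u\in B_k:\|\gamma(u)\|<r_k\}$ and $P$ is the projection onto $Y_{k-1}$ along $Z_k$ (an odd continuous map from $\partial U\subset Y_k$ into the \emph{strictly lower-dimensional} $Y_{k-1}$ must vanish, so your ``spheres of the same dimension'' phrasing should be adjusted to a dimension drop), yielding $c_k\geq b_k\rightarrow\infty$; and the odd quantitative deformation lemma with $0<\varepsilon<c_k/2$ so that $\eta=\mathrm{id}$ on $\{I\leq0\}\supset\gamma(\partial B_k)$ by $(A_{1})$, showing each $c_k$ is a critical value for $k$ large.
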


\noindent
\textbf{Proof of Theorem \ref{MEXS2}}.
We denote the energy functional associated to equation \eqref{SCCE} by
$$
J_{\lambda,p}(u)=\frac{1}{2}\int_{\Omega}|\nabla u|^{2}dx-\frac{1}{2p}\int_{\Omega}
\int_{\Omega}\frac{|u(x)|^{p}|u(y)|^{p}}
{|x-y|^{\mu}}dxdy-\frac{\lambda}{2}\int_{\Omega}|u|^{q+1}dx.
$$
It is standard to prove the $ (PS) $ condition $(A_{3})$ holds everywhere and we only need to check the conditions $(A_{1})$ and $(A_{2})$ hold. In fact,
similar to the proof Lemma 2.3 in \cite{GY}, we have
$$
\|\cdot\|_{2}:=\Big(\int_{\Omega}\int_{\Omega}\frac{|\cdot|^{p}|\cdot|^{p}}
{|x-y|^{\mu}}dxdy\Big)^{\frac{1}{2p}}
$$
defines a norm on $L^{\frac{2Np}{2N-\mu}}(\Omega)$. Thus
$$
J_{\lambda,p}(u)\leq\frac{1}{2}\int_{\Omega}|\nabla u|^{2}dx-\frac{1}{2p}\|u\|_{2}^{2p}.
$$
Since $Y_{k}$ is a finite dimensional subspace of $H_{0}^{1}(\Omega)$, we have all norms on $Y_{k}$ are equivalent and so relation $(A_{1})$ is satisfied for every $\rho_{k}>0$ large enough when $\lambda>0$.

We denote
$$
\beta_{k}:=\sup_{u\in Z_{k},\|u\|=1}|u|_{\frac{2Np}{2N-\mu}}
$$
and have
\begin{equation}\label{E9}
\beta_{k}\rightarrow0,k\rightarrow\infty
\end{equation}
by Lemma 3.8 of \cite{Wi}. By the Hardy-Littlewood-Sobolev inequality, we obtain
$$
\int_{\Omega}\int_{\Omega}\frac{|u(x)|^{p}|u(y)|^{p}}
{|x-y|^{\mu}}dxdy\leq C_{1}|u|_{\frac{2Np}{2N-\mu}}^{2p}.
$$
Thus we get, for all $u\in Z_{k}\backslash\ \{0\}$,
\begin{equation}\label{E10}
\aligned
J_{\lambda}(u)&\geq\frac{1}{2}\int_{\Omega}|\nabla u|^{2}dx-\frac{\lambda}{q+1}\int_{\Omega}| u|^{q+1}dx-C_{2}|u|_{\frac{2Np}{2N-\mu}}^{2p}\\
&\geq\frac{1}{2}\|u\|^{2}-C_{2}\beta_{k}^{2p}\|u\|^{2p}-\frac{\lambda}{q+1}\int_{\Omega}| u|^{q+1}dx.\\
\endaligned
\end{equation}
We choose $r_{k}:=(2pC_{2}\beta_{k}^{2p})^{\frac{1}{2-2p}}$ and get, for all $u\in Z_{k}\backslash\ \{0\}$ and $ \|u\|= r_{k}$,
$$
J_{\lambda}(u)\geq(\frac{1}{2}-\frac{1}{2p})(2pC_{2}\beta_{k}^{2p})^{\frac{1}{2-2p}}
-\frac{\lambda}{q+1}\int_{\Omega}| u|^{q+1}dx.
$$
From \eqref{E9}, we have that relation $(A_{2})$ is proved.  $\hfill{} \Box$

\vspace{2cm}
\noindent {\bf Acknowledgements}
The authors would like to thank  the anonymous referee
for his/her useful comments and suggestions which help to improve the presentation of the paper greatly.

\end{document}